\numberwithin{equation}{section}
\newtheorem{theorem}{Theorem}[section]
\newtheorem{lemma}[theorem]{Lemma}
\newtheorem{proposition}[theorem]{Proposition}
\newtheorem{corollary}[theorem]{Corollary}
\theoremstyle{definition}
\newtheorem{definition}[theorem]{Definition}
\theoremstyle{remark}
\newtheorem{remark}[theorem]{Remark}
\newtheorem{example}[theorem]{Example}
\pgfplotsset{compat=1.18}
\newcommand{\norm}[1]{\left\lVert#1\right\rVert}
\newcommand{\specnorm}[1]{\norm{#1}_2}
\def\R{\mathbb{R}}
\newlength\mylen
\newcommand{\vertiii}[1]{{\left\vert\kern-0.25ex\left\vert\kern-0.25ex\left\vert #1 
    \right\vert\kern-0.25ex\right\vert\kern-0.25ex\right\vert}}
\def\R{\mathbb{R}}
\def\cM{\cal{M}}
\def\fplrsga{T_{\eta,\tau}^*}
\def\lipDdxf{L_{D \partial_x f}}
\def\lipDdyg{L_{D \partial_y g}}
\title{A Low-Rank Symplectic Gradient Adjustment Method for Computing Nash Equilibria}
\author{N.~Vater\thanks{Institut f\"ur Mathematik, Universit\"at W\"urzburg, Emil-Fischer-Strasse 30, 97074 W\"urzburg, Germany. \texttt{nadja.vater@mathematik.uni-wuerzburg.de}} \and
K.~R.~Foglia\thanks{Department of Mathematics and Computer Science, University of Calabria, Ponte P. Bucci, 30B, 87036 Arcavacata di Rende (CS), Italy. \texttt{katherine.foglia@unical.it}} \and
V.~Colao\thanks{Corresponding author. Department of Mathematics and Computer Science, University of Calabria, Ponte P. Bucci, 30B, 87036 Arcavacata di Rende (CS), Italy. \texttt{vittorio.colao@unical.it}} \and
A.~Borz{\`{\i}}\thanks{Institut f\"ur Mathematik, Universit\"at W\"urzburg, Emil-Fischer-Strasse 30, 97074 W\"urzburg, Germany. \texttt{alfio.borzi@mathematik.uni-wuerzburg.de}}
}
\date{} 
\newcommand{\theinstitute}{}
\newcommand{\institute}[1]{\gdef\theinstitute{#1}}
\apptocmd{\maketitle}{%
  \begingroup
  \def\at{\par}%
  \def\and{\par\medskip}%
  \begin{center}\small\theinstitute\end{center}%
  \endgroup
}{}{}
\newcommand{\keywords}[1]{\par\noindent\textbf{Keywords:} #1\par}
\newcommand{\subclass}[1]{\par\noindent\textbf{MSC (2020):} #1\par}
\newenvironment{acknowledgements}
  {\section*{Acknowledgements}}
  {}
\newif\ifrevcolors
  \colorlet{revblue}{blue}
  \colorlet{revgreen}{blue}
  \colorlet{revviolet}{blue}
  \colorlet{revblue}{black}
  \colorlet{revgreen}{black}
  \colorlet{revviolet}{black}
\begin{document}




\title{A low-rank symplectic gradient adjustment method for computing Nash equilibria} 
\author{N. Vater \and
K. R. Foglia \and 
V. Colao$^*$\thanks{$^*$Corresponding author} \and 
A. Borz{\`{\i}}}

\institute{N. Vater \at 
    Institut f\"ur Mathematik, Universit\"at W\"urzburg \\
    Emil-Fischer-Strasse 30, W\"urzburg, 97074, Germany\\
    nadja.vater@mathematik.uni-wuerzburg.de
    \and 
    K. R. Foglia \at
    Department of Mathematics and Computer Science, University of Calabria \\
    Ponte P. Bucci, 30B, Arcavacata di Rende (CS), 87036, Italy\\
    katherine.foglia@unical.it
    \and 
    V. Colao \at 
    Department of Mathematics and Computer Science, University of Calabria \\
    Ponte P. Bucci, 30B, Arcavacata di Rende (CS), 87036, Italy\\
    vittorio.colao@unical.it
    \and
    A. Borz{\`{\i}} \at 
    Institut f\"ur Mathematik, Universit\"at W\"urzburg \\
    Emil-Fischer-Strasse 30, W\"urzburg, 97074, Germany\\
    alfio.borzi@mathematik.uni-wuerzburg.de
    }
\maketitle

\begin{abstract}
This work presents a theoretical and numerical investigation 
of the symplectic gradient adjustment (SGA) method and of a low-rank SGA (LRSGA) 
method for efficiently solving revviolet optimization problems arising from two-player Nash games. The SGA method outperforms the gradient method 
by including second-order mixed derivatives computed at each iterate, which 
requires considerably larger computational effort. 
For this reason, an LRSGA method is proposed where the approximation to 
second-order mixed derivatives is obtained by rank-one updates. 
The theoretical analysis presented in this work focuses on novel convergence 
estimates for the SGA and LRSGA methods, including parameter bounds. 
The numerical experiments complement the theory by
studying the behavior of LRSGA on explicit deterministic games with known
equilibria and by evaluating its computational advantage over exact SGA on a
CLIP-inspired neural-network training task, where LRSGA achieves comparable loss
values lower CPU time than SGA with explicitly
assembled mixed-derivative blocks.
\end{abstract}




 
\keywords{ Symplectic gradient adjustment method \and low-rank updates \and 
  Nash equilibria \and convergence analysis \and Green AI \and Environmental Footprint } 
\subclass{49M15 \and 65H04 \and 65H10 \and 90C30 \and 47H05}





\section{Introduction}
\label{sec:intro}

Multi-objective optimization problems in a game framework appear in many fields of application ranging from aerodynamics to topology optimization and 
machine learning; see, e.g., \cite{Gemp2018,Goodfellow2014,Habbal2004,Navon2022,SchaeferAnandkumar2019,Tang2007,Tang2016,Tennenholtz2002}. In this framework, Nash equilibria (NE) problems are in the focus of these applications or they are instrumental 
for identifying special solutions along the Pareto front, usually by means of 
bargaining criteria \cite{Binois2020,Cala2021,Navon2022}. This fact motivates 
a great effort in developing efficient numerical methods for solving NE 
problems, and the purpose of our work is to contribute to this ongoing research 
with the analysis and development of gradient-based techniques 
applied to the following class of NE problems:
Find $(x_*, y_*) \in X \times Y$ such that
\begin{equation}
\begin{split}
f(x_*,y_*)   & = \min_{x \in X} f(x,y_*) , \\
g (x_*,y_*) & = \min_{y \in Y} g (x_*,y) ,
\end{split}
\label{GNEprob}
\end{equation}
where $X \subset \R^m $ and $Y \subset \R^n $ are two compact and convex subsets and $f, g \in C^3(\Omega, \mathbb{R})$ with $\Omega \subseteq X \times Y$ being a convex domain.

We remark that this \textcolor{revviolet}{optimization problem arising from two-player Nash games} can
be seen as a generalization of a Nash game of two players having  
mixed strategies \cite{Osborne2004}. In particular, we consider 
zero-sum games where $g(x,y)=-f(x,y)$, and player one's gain 
with strategy $x$ corresponds to player two's loss with strategy $y$. 

Assuming differentiability of the loss (objective) functions $f$ and $g$, 
a natural first-order approach for solving \eqref{GNEprob} is to apply the following simultaneous update:
\begin{equation}
\begin{split}
    x_{k+1} &= x_k - \eta \, \partial_x f(x_k,y_k) \\
     y_{k+1} &= y_{k}  - \eta \, \partial_y g(x_k,y_k),
    \end{split} 
         \label{eGD}
\end{equation}
where $\eta > 0$ represents a stepsize.
\textcolor{revgreen}{Although \eqref{eGD} is written in a loss-minimization
form, the game-optimization literature usually refers to this first-order
scheme as gradient descent-ascent (GDA), especially in min--max formulations
where the second loss \(g\) is replaced by the payoff \(-g\) (see e.g. \cite{Balduzzi2018,Letcher2019}). We therefore use
the term GDA for \eqref{eGD} in the sequel.}
Unfortunately, the \textcolor{revgreen}{GDA method} has the disadvantage that, although it may
converge for sufficiently small stepsize $\eta >0$, it produces iterates that 
spiral around the NE solution, thus considerably slowing down convergence.
We illustrate this fact considering the following loss functions:
\begin{equation}
    f(x, y) = \frac{1}{2} x^2 + xy,
    \qquad
    g(x, y) = \frac{1}{2} y^2 - xy.
    \label{eNEsimple}
\end{equation}
As discussed in the next section, the NE problem \eqref{GNEprob}
with these loss functions has the unique NE solution
given by $(x_*, y_*) = (0,0)$. Now, we apply the method \eqref{eGD}
with initialization $(x_0, y_0) = (1,1)$ and $\eta=0.7$ and obtain
the spiraling iterates depicted in Figure \ref{figGD}  (left). One can also see
that, by choosing $\eta=1$, all iterates take sequentially the values of
the coordinates $(\pm 1, \pm 1)$, and never reach the NE point; see Figure \ref{figGD}  (right).
\begin{figure}[h]
    \centering
    \includegraphics[width=0.4\textwidth]{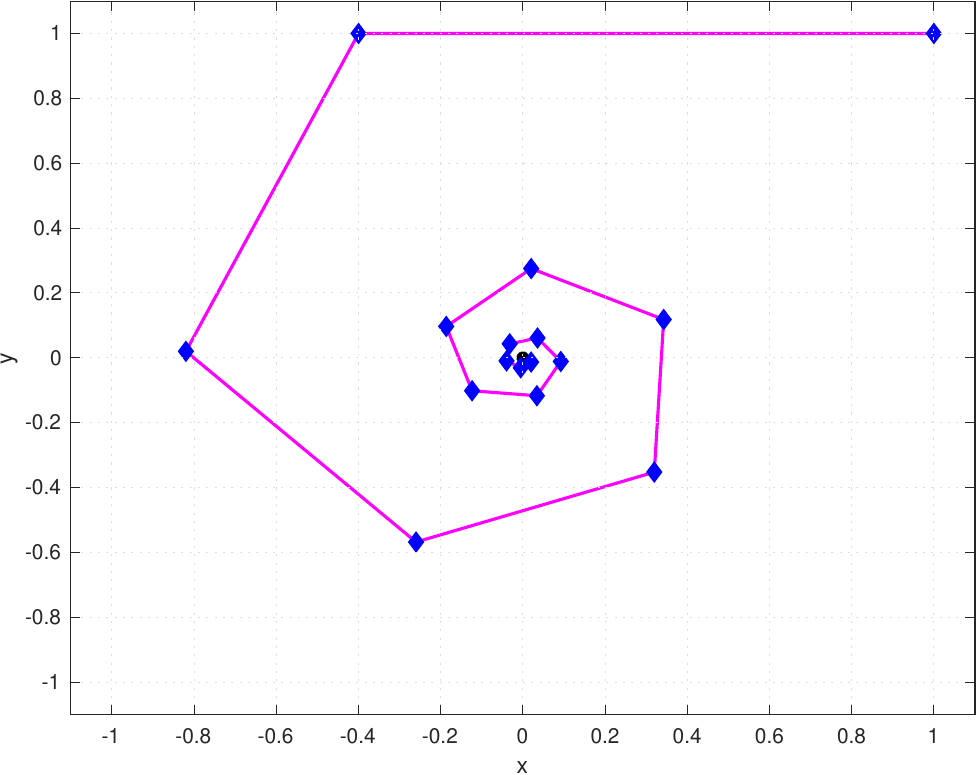}
    \includegraphics[width=0.4\textwidth]{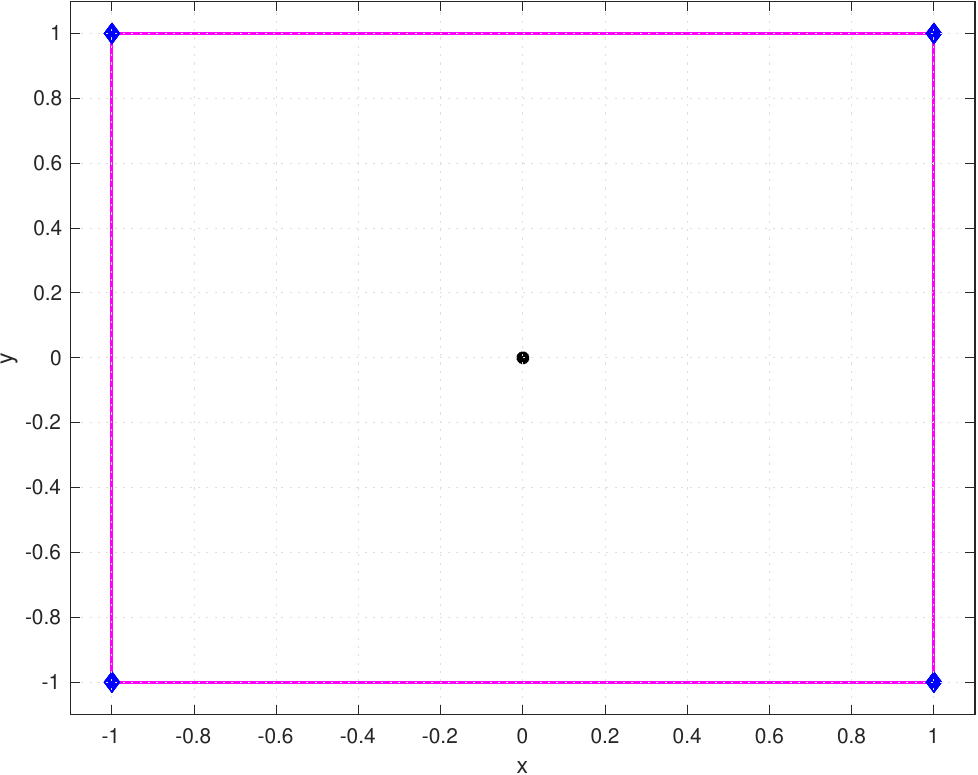}
    \caption{Iterates of the \textcolor{revgreen}{GDA method} with $\eta=0.7$ (left) and $\eta=1$ (right).}
    \label{figGD}
\end{figure}

The phenomenon of spiraling of the \textcolor{revgreen}{GDA} iterates has inspired recent
research work to improve its understanding in a more general setting and has been
a source of inspiration for designing modified \textcolor{revgreen}{GDA} methods that do not spiral
but converge `straight' to the NE solution. We refer to \cite{SchaeferAnandkumar2019} 
for a detailed review and references on modified \textcolor{revgreen}{GDA} methods. One approach is to formulate iterative schemes using second-order
information that results in a preconditioned GDA method as follows:
\begin{equation}
    \label{modifiedGD}
    \begin{pmatrix} x_{k+1} \\ y_{k+1} \end{pmatrix}
    = \begin{pmatrix} x_{k} \\ y_{k} \end{pmatrix}
    - \eta \, \cM_k 
    \begin{pmatrix} \partial_x f_k \\ \partial_y g_k  \end{pmatrix},
\end{equation}
where $\partial_x f_k:=\partial_x f (x_k,y_k)$ and 
$\partial_y g_k:=\partial_y g (x_k,y_k)$, and the entries of the matrix $\cM_k  \in \mathbb{R}^{(m+n) \times (m+n)}$ are determined by second-order derivatives of $f$ and $g$ at the current iterate 
$(x_k,y_k)$; see \cite{Balduzzi2018,Letcher2019,SchaeferAnandkumar2019}. Specifically, 
in \cite{SchaeferAnandkumar2019} a competitive GD (CGD) method is proposed, 
where 
\begin{equation}
    \label{MCGD}
\cM_{CGD, k}= \begin{pmatrix} I_m & \eta \,  \partial_{xy}^2 f_k \\ 
     \eta \, \partial_{yx}^2 g_k  & I_n \end{pmatrix} ^{\text{$-1$}} \approx
     \begin{pmatrix} I_m & - \eta \,  \partial_{xy}^2 f_k \\ 
    - \eta \, \partial_{yx}^2 g_k  & I_n \end{pmatrix},
\end{equation}
where $I_m$ (resp. $I_n$) denotes the identity matrix in $\mathbb{R}^{m \times m}$ (resp. $\mathbb{R}^{n \times n}$), and the given approximation, for $\eta$ sufficiently small,
is also proposed in \cite{SchaeferAnandkumar2019}.
\textcolor{revblue}{This approximation is the first-order Neumann expansion of the
block inverse. Writing the matrix in \eqref{MCGD} as \(I+\eta B\) with
\(B=\left(\begin{smallmatrix} 0 & \partial_{xy}^2 f_k \\ \partial_{yx}^2 g_k & 0\end{smallmatrix}\right)\),
we have \((I+\eta B)^{-1}=I-\eta B+\mathcal O(\eta^2)\), and neglecting the
\(\mathcal O(\eta^2)\) term yields exactly the right-hand side of \eqref{MCGD}.}
Further, in \cite{Balduzzi2018,Letcher2019} a symplectic gradient adjustment (SGA)
method is presented, where $\cM_k$, including a parameter $\tau>0$, is given by 
\begin{equation}
    \label{MSGA}
\cM_{SGA, k}= \begin{pmatrix} I_m & - \frac{\tau}{2}\, ( \partial_{xy}^2 f_k - \partial_{yx}^2 g_k^T)\\ 
    - \frac{\tau}{2}\, (\partial_{yx}^2 g_k - \partial_{xy}^2 f_k^T) & I_n \end{pmatrix} .
\end{equation}
\textcolor{revgreen}{For the motivating example in \eqref{eNEsimple}, the
effect of second-order derivative corrections can be observed directly. With
the same stepsize \(\eta=1\) used in Figure~\ref{figGD}, GDA cycles around the
Nash equilibrium, whereas exact CGD reaches the equilibrium in one step in this
linear-quadratic case, and SGA with \(\tau=0.5\) damps the rotational component
and converges. In this particular example, exact CGD is favored by the special
algebraic structure of the game; therefore, the one-step convergence should be
read as an illustrative property of this linear-quadratic case rather than as a
general comparison between CGD and SGA.}

\begin{figure}[h]
    \centering
    \includegraphics[width=0.8\textwidth]{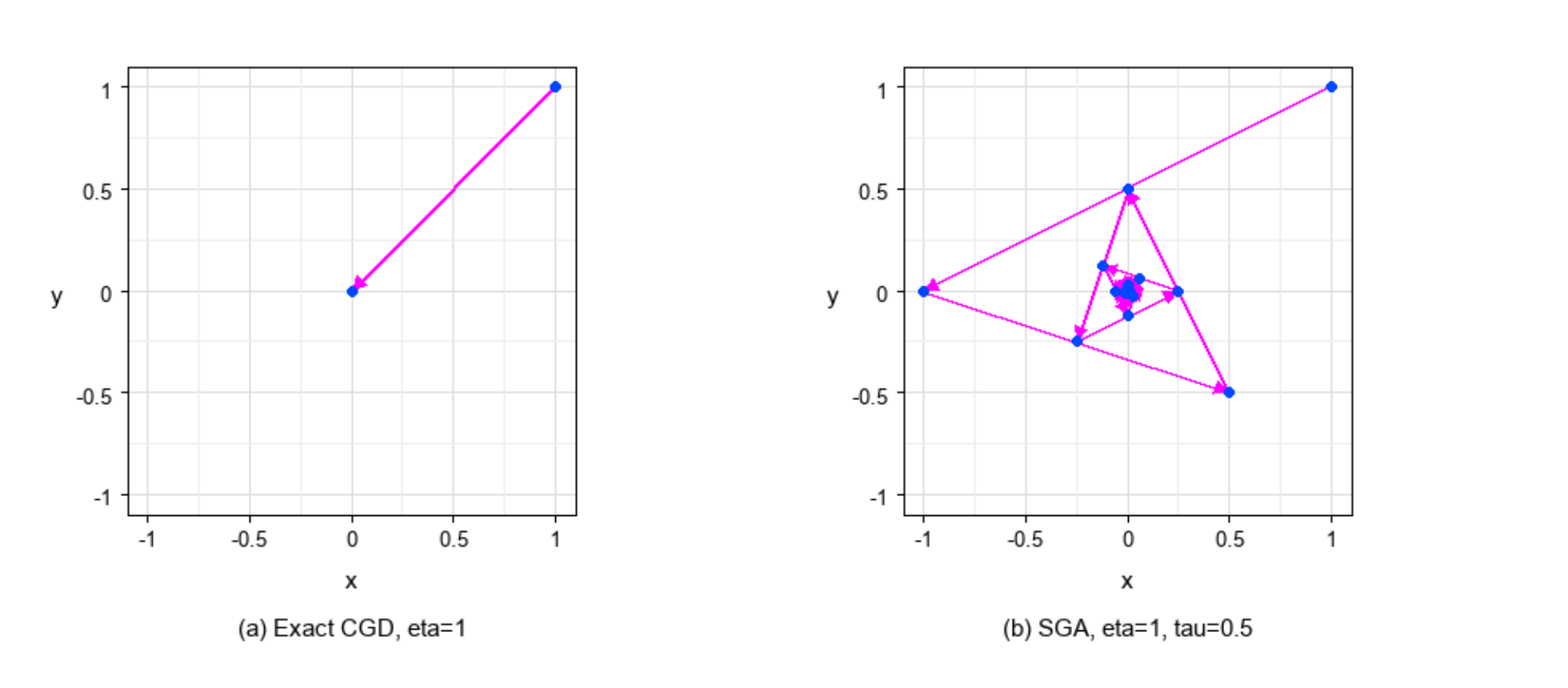}
    \caption{\textcolor{revgreen}{Iterates of exact CGD with \(\eta=1\)
    (left) and SGA with \(\eta=1\), \(\tau=0.5\) (right) for the motivating game
    in \eqref{eNEsimple}.}}
    \label{figCGDSGA}
\end{figure}

Notice that the two methods above, although derived from different perspectives, 
are somewhat similar. In fact, they have similar convergence performance. 

However, while the inclusion of second-order information with 
the mixed derivatives $\partial_{xy}^2 f_k$ and $\textcolor{revblue}{\partial_{yx}^2 g_k}$ 
considerably improves the convergence behaviour of the CGD and 
SGA methods in comparison to the GDA method, it requires
additional computational effort that can make the application 
of these methods to large-size problems prohibitive, especially 
in the case of game problems involving neural networks, where the computation 
of second-order derivatives involves back-propagation processes.

This fact motivates our investigation that aims at developing effective rank-one
updates to the mixed derivatives matrices that are based only on 
first-order information. Furthermore, we support our development 
with theoretical results that can be applied to the original methods 
discussed above and also accommodate our low-rank variant. 
However, in order to keep this work to a reasonable size, we present our results and techniques for the SGA method and its low-rank-update version. 

In the context of two-player NE problems, we investigate the \textcolor{revblue}{convergence of the SGA method from a metric point of view, particularly emphasizing monotonicity properties}. Further, we extend our analysis to our low-rank SGA (LRSGA) by proving its convergence under suitable assumptions. 
Our theoretical work is complemented by results of numerical
experiments comparing the computational performance of the SGA and LRSGA methods.

\textcolor{revblue}{More precisely, our convergence analysis is first carried out in
the broader class of real, possibly infinite-dimensional, Hilbert spaces, where
the convergence of these schemes is shown to follow from metric properties of a
skew-corrected operator; the finite-dimensional game results are then obtained as
a special case. Within this framework we study three iterations. The first is the
SGA method
\[
z_{k+1}=z_k-\eta\,(I-\tau A(z_k))\,F(z_k),
\]
where \(F\) denotes the game gradient and \(A\) the antisymmetric part of the
game Hessian (both introduced in Section~\ref{sec:Nash}). The second is its
frozen-coefficient variant, the frozen SGA method, in which the skew correction
is evaluated once at the equilibrium \(w_*\),
\[
v_{k+1}=v_k-\eta\,(I-\tau A(w_*))\,F(v_k);
\]
\textcolor{revviolet}{The frozen SGA iteration is introduced solely as an analytical tool and
is not intended for practical implementation, since it requires
knowledge of the equilibrium \(w_*\).}
The third is the low-rank method LRSGA,
\[
w_{k+1}=w_k-\eta\,(I-\tau \alpha_k)\,F(w_k),
\]
where \(\alpha_k\) is a skew-symmetric approximation of \(A(w_*)\) built from
rank-one secant updates using first-order information only.}


In the next section, we define \textcolor{revblue}{two-player} Nash games and the corresponding solution concept of Nash equilibria. Sufficient conditions for the existence of NE solutions are given. In Section~\ref{sec:SGAandLR}, we illustrate the construction of the SGA method 
and our low-rank variant LRSGA method. Section~\ref{sec:SGAtheory} is devoted to the analysis of the SGA method, where we propose a theoretical framework for a convergence analysis of the SGA method that is based on metric properties of the involved operators\textcolor{revblue}{, formulated in a general, possibly infinite-dimensional, Hilbert space and then specialized to the finite-dimensional game}. In particular, we provide explicit bounds for the parameters involved in the SGA method, addressing both global convergence for quadratic objective functions and local convergence near equilibrium points. Our results are derived through an analysis of the properties of the operators involved, such as strong monotonicity and inverse strong monotonicity. In Section~\ref{sec:LRSGAtheory}, this analysis is extended to the LRSGA method.
\textcolor{revgreen}{In Section~\ref{sec:numerical-experiments}, we report
three numerical experiments. The first two experiments consider explicit
deterministic games and evaluate the qualitative behavior of the proposed LRSGA
method near known equilibria. The third experiment is a CLIP-inspired
neural-network toy example, where we compare LRSGA with exact SGA and show that
LRSGA achieves comparable loss values with a substantial reduction in training
time. A section of conclusions completes the paper.}

\section{Nash equilibria and their characterization}
\label{sec:Nash}

We consider coupled optimization problems involving two sets 
of optimization variables, representing the players' (mixed) strategies, 
and two corresponding loss functions. In a non-cooperative setting, the objectives are such that 
minimizing one function with respect to one set of variables may result 
in an increase of the other function and vice-versa. In this competitive setting, 
a widely used concept of optimality is called Nash equilibrium \cite{Nash1948,Nash1951}. 
This equilibrium configuration defines the solution of a non-cooperative Nash game. 

As already mentioned in the introduction, we consider 
game problems with two players whose strategies are identified with the 
variables $x \in X \subset \R^m $ and $y \in Y \subset \R^n $, where $X$ and $Y$ are two compact and convex subsets. The purpose of each player is to minimize 
the corresponding loss function that depends 
on both strategies $x$ and $y$. We denote these functions with 
$f$ for the player $P_1$, and $g$ for the player $P_2$. In this 
context, a Nash equilibrium is defined as follows.

\begin{definition}\label{DefNE}
A pair $(x_*, y_*) \in X \times Y$ is a Nash equilibrium (NE) of a \textcolor{revblue}{two-player game} if 

\begin{enumerate} 
\item[(1)] The strategy $x_* \in X$ is the best choice of player $P_1$ in reply to the strategy 
$y_* \in Y$ adopted by player $P_2$ as follows: 
$$
f(x_*,y_*) = \min_{x \in X} f(x,y_*) ;
$$
\label{NEa}
\item[(2)] The strategy $y_* \in Y$ is the best choice of player $P_2$ in reply to the strategy 
$x_* \in X$ adopted by player $P_1$ as follows: 
$$
g (x_*,y_*) = \min_{y \in Y} g (x_*,y) .
$$
\label{NEb}
\end{enumerate}

\end{definition}

In this setting, it is assumed that each player has knowledge of both sets 
of strategies $X$ and $Y$, and of the objectives $f$ and $g$. \textcolor{revblue}{With the definition above, an equilibrium is achieved when no player can improve its objective by a unilateral change of strategy.} 

A zero-sum game corresponds to the setting $g = -f$. 
In this case, the first player with strategy $x \in X$ aims at 
minimizing $f$, while the second player aims at maximizing it using strategy $y \in Y$. 
Because in this game the gain for one player is equivalent to the loss for the other, 
we have a strictly competitive game. Zero-sum games represent the starting 
point of modern game theory introduced by John von Neumann \cite{vonNeumann1928}.

The NE defined above can be analyzed based on 
the best-response maps. For player $P_1$, the best-response correspondence $R_1 : Y \rightarrow 
P(X)$, where $P(X)$ denotes the power set of $X$, is given by 
$$
y \mapsto R_1(y):= \{ x \in X \, : \, f(x,y) \le f(x',y), \, x' \in X  \} .
$$
For player $P_2$, the best-response correspondence $R_2 : X \rightarrow 
P(Y)$ is given by 
$$
x \mapsto R_2(x):= \{ y \in Y \, : \, g(x,y) \le g(x,y'), \, y' \in Y  \} .
$$
Hence \textcolor{revblue}{we define} the multifunction $R : X \times Y \rightrightarrows X \times Y$ \textcolor{revblue}{by} $R(x,y) := R_1(y) \times R_2(x)$ so that a NE point $(x_*,y_*)$ 
corresponds to a fixed point of $R$ as follows: 
$$
(x_*,y_*) \in R(x_*,y_*) . 
$$

Notice that we can also define a 
local NE where (1) and (2) in Definition \ref{DefNE} hold for all $x \in X$ in an 
open neighbourhood of $x_*$, \textcolor{revblue}{respectively,} for all $y \in Y$ in an 
open neighbourhood of $y_*$. A local NE is isolated if there exists a neighbourhood 
of $(x_*,y_*)$ where this NE point is unique. 
We have the following existence result, whose proof is based 
on Kakutani's fixed-point theorem \cite{Debreu1952,Fan1952,Glicksberg1952}:

\begin{theorem}
Assume that $X$ and $Y$ are compact and convex subsets. 
Let $f$ and $g$ be continuous, and assume that the map 
$x \mapsto f(x,y)$ is a convex function of $x$, for each fixed $y \in Y$; 
further assume that the map 
$y \mapsto g(x,y)$ is a convex function of $y$, for each fixed $x \in X$. 
Then there exists a Nash equilibrium. 
\label{theorem-Nash}
\end{theorem}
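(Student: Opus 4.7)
The plan is to realize the Nash equilibrium as a fixed point of the joint best-response multifunction $R:X\times Y \rightrightarrows X\times Y$ defined in the excerpt, and then verify the hypotheses of Kakutani's fixed-point theorem on the compact convex set $X\times Y$. By the identification already noted in the paper, any $(x_*,y_*)\in R(x_*,y_*)$ is a Nash equilibrium, so existence of a fixed point concludes the argument.

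To apply Kakutani to $R$, I would check four properties in turn. First, the domain $X\times Y$ is nonempty, compact and convex by assumption, so the ambient set required by Kakutani is in place. Second, for each fixed $(x,y)$ the value $R(x,y)=R_1(y)\times R_2(x)$ is nonempty: since $X$ is compact and $x\mapsto f(x,y)$ is continuous, Weierstrass's theorem yields a minimizer, and likewise for $R_2(x)$. Third, $R(x,y)$ is convex: convexity of $x\mapsto f(x,y)$ implies that the argmin set $R_1(y)$ is a convex subset of the convex set $X$ (a convex combination of minimizers has value at most the minimum by Jensen's inequality, hence equals the minimum), and analogously for $R_2(x)$; a product of convex sets is convex.

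The remaining and most delicate condition is that $R$ has a closed graph (equivalently, since the codomain is compact, is upper hemicontinuous with compact values). This is the step I expect to be the main obstacle. For $R_1$ I would argue directly: take sequences $y_k\to y$ in $Y$ and $x_k\in R_1(y_k)$ with $x_k\to x$ in $X$; for any $x'\in X$ one has $f(x_k,y_k)\le f(x',y_k)$, and passing to the limit using continuity of $f$ yields $f(x,y)\le f(x',y)$, so $x\in R_1(y)$. (Alternatively one can invoke Berge's maximum theorem on the continuous function $f$ over the constant compact constraint $X$.) The same argument applies to $R_2$, and closedness of the product graph follows from closedness of the factors. Compactness of $R(x,y)$ is then immediate since a closed subset of the compact set $X\times Y$ is compact.

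With nonemptiness, convexity, compactness of values, and the closed-graph property established, Kakutani's theorem provides a fixed point $(x_*,y_*)\in R(x_*,y_*)$, which by construction satisfies both conditions (1) and (2) of \Cref{DefNE}, proving existence of a Nash equilibrium. The only subtle point beyond bookkeeping is the closed-graph verification; everything else is a direct consequence of compactness, continuity, and convexity of the per-player objectives.
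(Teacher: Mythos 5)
Your proposal is correct and is exactly the classical Kakutani-based argument that the paper itself invokes (it cites Debreu, Fan, and Glicksberg rather than writing the proof out): realize the equilibrium as a fixed point of the joint best-response map $R$ and verify nonempty, convex, compact values together with the closed-graph property. All four verifications are carried out correctly, so nothing further is needed.
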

In the case of a zero-sum game, the same theorem applies with 
$\phi(x,y):=f(x,y)=-g(x,y)$, assuming that the map 
$x \mapsto \phi(x,y)$ is a convex function of $x$, for each fixed $y \in Y$, 
and the map 
$y \mapsto \phi(x,y)$ is a concave function of $y$, for each fixed $x \in X$. 
In his original work \cite{vonNeumann1928}, von Neumann considered
\textcolor{revblue}{\(f(x,y)=x^T A y\), where} $A \in \R^{m \times n}$ is called the payoff or game matrix.

We assume that $f$ and $g$ are both twice continuously differentiable.
\textcolor{revblue}{From this point on, whenever first-order conditions are used,
we restrict attention to equilibria in the interior of the feasible set
\[
K:=X\times Y,\qquad w_*=(x_*,y_*)\in\operatorname{int}(K).
\]
The stationarity equations below are therefore used only as interior
first-order necessary conditions. Boundary equilibria would require a
variational inequality, or equivalently a normal-cone formulation, which is not
the setting considered in the convergence analysis.}
Then, we can characterize the NE point by the following 
necessary first-order equilibrium conditions; see, e.g., \cite{Ratliff2016}: 
\begin{equation}
\partial_{x} f(x_*,y_*) =0, \qquad \partial_{y} g (x_*,y_*) =0 .
\label{equiGradNE}
\end{equation}
We define the game gradient as the mapping $F: \mathbb{R}^{m} \times \mathbb{R}^{n} \to \mathbb{R}^{m + n}$ given by
\begin{equation}
    \label{eq:DefGameGradient}
    F(x, y) = \begin{pmatrix} \partial_{x} f(x, y) \\ \partial_{y} g(x, y) \end{pmatrix} .
\end{equation}
For a shorter notation, we also write 
$w = (x, y) \in \mathbb{R}^{m} \times \mathbb{R}^{n}$, 
which is intended as a row or column vector, depending on the context. Then, the necessary conditions \eqref{equiGradNE} for $w_* \in \mathbb{R}^{m + n}$ to be a Nash equilibrium read as $F(w_*) = 0$.

Necessary second-order equilibrium conditions are given by 
the following positive semi-definiteness of the Hessians: 
\begin{equation}
{\partial_{x x}^2} f(x_*,y_*) \succeq 0, \qquad 
{\partial_{y y}^2} g(x_*,y_*) \succeq 0 . 
\label{equiHess1NE}
\end{equation}
If these inequalities are strict and the game gradient is zero, 
then $(x_*,y_*) $ is a \textcolor{revgreen}{local} Nash equilibrium; see \cite{Ratliff2016}.
However, these conditions do not imply that the NE point is isolated. 

Now, consider the mapping $H: \mathbb{R}^{m} \times \mathbb{R}^n \to \mathbb{R}^{(m+n) \times (m+n)}$ defined by 
\begin{equation}
    \label{eq:DefGameHessian}
    H(x, y) = D F (x, y) = 
    \begin{pmatrix}
        {\partial_{x x}^2} f(x,y) & \textcolor{revblue}{\partial_{x y}^2 f(x,y)} \\
        \textcolor{revblue}{\partial_{y x}^2 g(x,y)} & {\partial_{y y}^2} g(x,y)
    \end{pmatrix}  .
\end{equation}
This matrix is called the game Hessian, and $D F$ is the Jacobian of the game
gradient $F$. We consider the decomposition of the game Hessian 
as $H(x, y) = S(x,y) + A(x,y)$, where $S=S^T=(H+H^T)/2$ is the symmetric part of \textcolor{revblue}{\(H\)}, and 
$A=-A^T= (H-H^T)/2$ represents its antisymmetric part.

We remark that, if $(x_*,y_*) $ is a NE point, and $H(x_*, y_*)$ is invertible, then this NE point is isolated \cite{Ratliff2016}. For this reason, in 
order to rule out the occurrence of a continuum of NE points, which are usually unstable, 
we always assume that $H(x_*, y_*)$ is invertible. Further, we assume that 
$H(x_*, y_*)$ is positive semi-definite, $H(x_*,y_*) \succeq 0$, in the 
sense that $\langle w, H(w_*)\, w \rangle \ge 0$, $w \in \R^{m+n}$,
\textcolor{revblue}{where} $\langle \cdot, \cdot \rangle$ \textcolor{revblue}{denotes} the Euclidean scalar product and $\specnorm{\cdot}$ the corresponding Euclidean norm. Therefore, we define stable NE (SNE) points as follows \cite{Letcher2019}:

\begin{definition}\label{DefSNEpoint}
A NE point $w_*=(x_*, y_*)$ is said to be stable, if $H(x_*, y_*)$ is invertible and
positive semi-definite.
\end{definition}

\textcolor{revviolet}{The notion of stability adopted here is motivated by the local dynamics
associated with the game gradient. Let \(w_*\) be a Nash equilibrium,
so that \(F(w_*)=0\), and consider a perturbation
\[
z=w-w_*.
\]
Since
\[
F(w)=H(w_*)\,z+o(\|z\|),
\]
the continuous gradient dynamics
\[
\dot w = -F(w)
\]
is locally approximated by the linear system
\[
\dot z=-H(w_*)\,z.
\]
Writing
\[
H(w_*)=S(w_*)+A(w_*),
\]
where
\[
S(w_*)=\frac{1}{2}\big(H(w_*)+H(w_*)^T\big)
\]
and
\[
A(w_*)=\frac{1}{2}\big(H(w_*)-H(w_*)^T\big),
\]
we obtain
\[
\frac{1}{2}\frac{d}{dt}\|z\|^2
= -z^T H(w_*) z
= -z^T S(w_*) z,
\]
since
\[
z^T A(w_*) z=0.
\]
Therefore, the symmetric part of the game Hessian completely determines
the local dissipation properties of the linearized dynamics, whereas the
antisymmetric part contributes only rotational effects. In particular,
the condition
\[
H(w_*)\succeq0
\]
implies that the linearized dynamics is nonexpansive, while
\[
H(w_*)\succ0
\]
yields strict local dissipation. This observation motivates the notion
of stable Nash equilibrium adopted in the SGA literature
\cite{Balduzzi2018,Letcher2019}.}

\textcolor{revblue}{The invertibility of
\(H(w_*)\) guarantees that \(w_*\) is an isolated equilibrium, while the
\textcolor{revviolet}{condition \(H(w_*)\succeq 0\) is stronger than the standard blockwise
second-order Nash conditions and is the condition adopted in the SGA
literature to characterize stable equilibria.}}
\textcolor{revblue}{We remark that the term stable is not meant to assert
structural stability of the game, nor a stability notion for variational
inequalities; it refers only to the dissipation properties of the linearized
dynamics discussed above.}

In our work, we focus on the convergence properties of the SGA and LRSGA methods
for computing SNE points. This is a large class of solutions which include 
those arising in zero-sum games, as stated in the following lemma. 
(See \cite{Balduzzi2018} for a similar statement.)

\begin{lemma}
Let $w_*=(x_*, y_*)$ be a NE point of a zero-sum game, and 
$H(x_*, y_*)$ be invertible. Then $w_*=(x_*, y_*)$ is a stable NE point.
\end{lemma}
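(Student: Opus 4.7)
The plan is to exploit the zero-sum structure $g=-f$, which forces the game Hessian at a NE to decompose into a block in which the cross (off-diagonal) second-order terms antisymmetrize and hence contribute nothing to the quadratic form $\langle w, H(w_*)\, w\rangle$. Combined with the second-order necessary conditions \eqref{equiHess1NE} for the two one-sided minimizations, this will yield positive semi-definiteness of $H(x_*,y_*)$; invertibility then upgrades this to stability in the sense of \cref{DefSNEpoint}.

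First I would substitute $g=-f$ into the definition \eqref{eq:DefGameHessian} of the game Hessian to obtain
\[
H(x_*,y_*) =
\begin{pmatrix}
\partial_{xx}^2 f(x_*,y_*) & \partial_{yx}^2 f(x_*,y_*) \\[2pt]
-\partial_{xy}^2 f(x_*,y_*) & -\partial_{yy}^2 f(x_*,y_*)
\end{pmatrix}.
\]
Then I would apply the second-order necessary conditions \eqref{equiHess1NE}: the first one gives $\partial_{xx}^2 f(x_*,y_*)\succeq 0$, while the second, rewritten in terms of $f$, gives $-\partial_{yy}^2 f(x_*,y_*)\succeq 0$. Hence the two diagonal blocks of $H(x_*,y_*)$ are both positive semi-definite.

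Next I would take an arbitrary test vector $w=(u,v)\in\R^m\times\R^n$ and expand
\[
\langle w, H(x_*,y_*)\,w\rangle
= u^{T}\partial_{xx}^2 f\, u + u^{T}\partial_{yx}^2 f\, v
- v^{T}\partial_{xy}^2 f\, u - v^{T}\partial_{yy}^2 f\, v,
\]
where all second-order terms are evaluated at $(x_*,y_*)$. Using Schwarz's theorem ($f\in C^3$ suffices), $\partial_{yx}^2 f=(\partial_{xy}^2 f)^{T}$, so the two mixed terms cancel. The quadratic form therefore reduces to
\[
\langle w, H(x_*,y_*)\,w\rangle
= u^{T}\partial_{xx}^2 f(x_*,y_*)\, u + v^{T}\bigl(-\partial_{yy}^2 f(x_*,y_*)\bigr)\, v \ge 0,
\]
proving $H(x_*,y_*)\succeq 0$. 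Combined with the standing invertibility hypothesis, this is exactly \cref{DefSNEpoint}, so $w_*$ is stable.

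There is no real obstacle here: the whole argument hinges on the single algebraic observation that in the zero-sum case the off-diagonal blocks of $H$ are exactly negatives of each other's transposes, so the cross contribution to the quadratic form vanishes. The only thing to be slightly careful about is invoking symmetry of mixed partials for $f$, which is justified by the $C^3$ regularity assumed in \eqref{GNEprob}; no convexity/concavity assumption on $f$ beyond the second-order necessary conditions at $w_*$ is needed.
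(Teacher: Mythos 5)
Your proof is correct and follows essentially the same route as the paper: both arguments rest on the observation that in the zero-sum case the off-diagonal blocks of $H(x_*,y_*)$ are negatives of each other's transposes, so their contribution to the quadratic form vanishes, leaving only the two diagonal blocks, which are positive semi-definite by \eqref{equiHess1NE}. The paper phrases this via the decomposition $H = S + A$ and the identity $\langle w, A(w_*)w\rangle = 0$, whereas you expand the quadratic form directly, but the computation is the same.
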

\begin{proof}
Consider the decomposition $H(w_*) = S(w_*) + A(w_*)$ for the case 
$g(x,y)=-f(x,y)$. We have
$$
    S(x_*, y_*) = 
    \begin{pmatrix}
        {\partial_{x x}^2} f(x_*, y_*) & 0 \\
        0 & {-\partial_{y y}^2} f(x_*, y_*)
    \end{pmatrix}  .
$$
We see that the symmetric part satisfies $\langle w, S(w_*)\, w \rangle \ge 0$ 
because of \eqref{equiHess1NE}, that is, \textcolor{revblue}{${\partial_{x x}^2} f(\textcolor{revgreen}{x_*,y_*})\succeq 0$} and $ {-\partial_{y y}^2} f (\textcolor{revgreen}{x_*,y_*})={\partial_{y y}^2} g(x_*,y_*) \succeq 0$. 
\textcolor{revblue}{Then \(H(w_*)\) is positive semidefinite since}
\[
\langle w, H(w_*)\, w \rangle
= \langle w, S(w_*)\, w \rangle +\langle w, A(w_*)\, w \rangle
= \langle w, S(w_*)\, w \rangle \ge 0 .
\]
\textcolor{revblue}{The claim follows from this result and the invertibility assumption.}
\end{proof}

\section{The SGA and LRSGA methods}
\label{sec:SGAandLR}

We remark that the condition $H(w_*) \succeq 0$ becomes necessary for 
obtaining convergence of several methods, even in the case of quadratic 
loss functions. In particular, this is true for the SGA method, as illustrated by the following example. 

\begin{example}
\label{ex:quadratic_game}
        Consider a two-player NE game with 
        $f(x, y) = x^2 + 3 xy$ and $g(x, y) = y^2 + 3 xy$. 
        The point $(x_*, y_*) = (0, 0)$ satisfies $F(x_*, y_*) = \begin{pmatrix} 2x_* + 3 y_* \\ 2y_* + 3 x_* \end{pmatrix} = 0$. \textcolor{revblue}{Moreover,}
        $\partial_{xx}^2 f(x_*, y_*) =2$ and $\partial_{yy}^2 g(x_*, y_*) =2$, 
        which are both strictly positive. Thus, the point $(x_*, y_*)$ is a Nash equilibrium. The game Hessian is given by
       $$
       H(x, y) = \begin{pmatrix} 2 & 3 \\3 &
                2 \end{pmatrix}, 
      $$
         which is invertible but not positive semi-definite. In fact, for any $(x,y)\in \R^2$, 
         the eigenvalues of $H(x, y)$ are given by $\lambda_1 = -1$ and
        \textcolor{revblue}{$\lambda_2 =5$}. Hence, the game Hessian is indefinite at the NE.
        
        Since $A(x,y)=0$, the SGA method coincides with the \textcolor{revgreen}{GDA method}
(the matrix $\cM_{SGA,k}$ is the identity). We have the \textcolor{revgreen}{corresponding first-order} iterates given by
\[
    \begin{pmatrix} x_{k+1} \\ y_{k+1}\end{pmatrix} = 
  \begin{pmatrix} 1 - 2 \eta & - 3 \eta \\ -3 \eta & 1 - 2 \eta  \end{pmatrix} \begin{pmatrix} x_k \\ y_k \end{pmatrix}.
\]
In fact, $(\varepsilon, - \varepsilon)$ is an eigenvector of the iteration matrix with corresponding eigenvalue $1 + \eta > 1$. Therefore, this sequence diverges for any initial choice $(x_0,y_0)=(\varepsilon, -\varepsilon)$ with $\varepsilon\neq 0$ and any choice of the parameters $\eta>0$. 

\end{example}
Notice that the above is an example of potential game \cite{MondererShapley1996}, with potential $\phi(x,y)=x^2+y^2+3xy$, and a counterexample to the statement
in \cite{Balduzzi2018,Letcher2019} that the \textcolor{revgreen}{GDA} method always converges for this class of games.

Next, we illustrate the SGA and LRSGA methods for determining SNE 
points in \textcolor{revblue}{two-player} games. The SGA method is an iterative scheme given by
\begin{equation}
    \label{eq:def_sga}
    \begin{pmatrix} x_{k+1} \\ y_{k+1} \end{pmatrix}
    = \begin{pmatrix} x_{k} \\ y_{k} \end{pmatrix}
    - \eta \, \begin{pmatrix} I_m & - \frac{\tau}{2}\, ( \partial_{xy}^2 f_k - \partial_{yx}^2 g_k^T)\\ 
    - \frac{\tau}{2}\, (\partial_{yx}^2 g_k - \partial_{xy}^2 f_k^T) & I_n \end{pmatrix}
    \begin{pmatrix} \partial_x f_k\\ \partial_y g_k \end{pmatrix},
\end{equation}
where $I_m$ (resp. $I_n$) denotes the identity matrix in $\R^{m \times m}$ (resp. $\R^{n \times n}$), $\eta >0$ represents the stepsize, and $\tau > 0$ is \textcolor{revblue}{an} auxiliary parameter.

The SGA method can be defined in terms of the game gradient $F$ and game Hessian $H$, 
and the decomposition $H(x, y)=S(x, y) + A(x, y)$, where $S: \mathbb{R}^{m} \times \mathbb{R}^n \to \mathbb{R}^{(m+n) \times (m+n)}$ and $A: \mathbb{R}^{m} \times \mathbb{R}^n \to \mathbb{R}^{(m+n) \times (m+n)}$ map $(x, y) \in \mathbb{R}^{m} \times \mathbb{R}^n$ to the symmetric and anti-symmetric components of the game Hessian. For a general 
\textcolor{revblue}{two-player} game, we have 
    \[
        S(x,y)=\begin{pmatrix}
            \partial_{xx} ^2 f(x, y)  & \frac{1}{2}(\partial_{xy}^2 f(x, y) + \partial_{yx}^2 g(x, y)^T) \\
            \frac{1}{2}(\partial_{xy}^2 f(x, y)^T + \partial_{yx}^2 g(x, y)) & \partial_{yy} ^2 g(x, y) 
        \end{pmatrix}
    \]
and
\[
    A(x,y)=\begin{pmatrix}
        0  & \frac{1}{2}(\partial_{xy}^2 f(x, y) - \partial_{yx}^2 g(x, y)^T) \\
        -\frac{1}{2}(\partial_{xy}^2 f(x, y)^T - \partial_{yx}^2 g(x, y)) & 0 
    \end{pmatrix}.
\]
Then, using the notation $z_k := (x_k, y_k)$, the SGA method can be written as follows:
\begin{equation}
    \label{eq:DefSGAReformulated}
    z_{k+1} = z_k - \eta \,  (I - \tau \, A(z_k)) \, F(z_k),
\end{equation}
where $I$ \textcolor{revblue}{denotes} the identity matrix in the appropriate Euclidean space.
By defining the operator $\Gamma_\tau: \mathbb{R}^{m+n} \to \mathbb{R}^{m + n}$ as $z \mapsto (I-\tau \, A(z)) \,F(z)$, we obtain the following operator form of the SGA method:
\begin{equation}
\label{eq:sga_operator_Form}
z_{k+1} = z_k - \eta \, \Gamma_\tau(z_k).
\end{equation}

A main disadvantage in the SGA method is 
the computation of the mixed derivatives $\partial_{xy}^2 f_k$ and $\textcolor{revblue}{\partial_{yx}^2 g_k}$. 
For large-size Nash games, the computation of these derivatives 
can be problematic, especially in the context of neural networks where 
back-propagation is involved. \textcolor{revblue}{Our purpose is to investigate how to efficiently}
determine matrices 
$M_k \in \R^{m \times n}$ and $N_k \in \R^{n \times m}$ that 
approximate $ \partial_{xy}^2 f_k$ and $ \partial_{yx}^2 g_k$, respectively, 
such that the convergence properties of the SGA method are preserved or even 
enhanced. 

For this purpose, we develop a least-change secant update method 
\cite{DennisSchnabel1979,WalkerWatson1990} to approximate the \textcolor{revblue}{second-order} derivatives $D \partial_x f$ and $D \partial_y g$ and use those parts of these Jacobians, corresponding to $\partial^2_{xy} f$ and $\partial^2_{yx} g$, for the computation of the new iterate $w_{k+1} = (x_{k+1}, y_{k+1})$.

The rank-one least-change secant update method applied to $D \partial_x f$ and $D \partial_y g$ 
gives the updates:
\begin{align}
    \label{eq:UpdateMixedDerivativesF}
    \mu_{k+1} &= \mu_k +
    \frac{(\partial_x f(w_{k+1}) - \partial_x f(w_k) - \mu_k (w_{k+1} - w_k))(w_{k+1} - w_k)^T}
    {(w_{k+1} - w_k)^T (w_{k+1} - w_k)}, \\
    \label{eq:UpdateMixedDerivativesG}
    \nu_{k+1} &= \nu_k
    + \frac{(\partial_y g(w_{k+1}) - \partial_y g(w_k) - \nu_k (w_{k+1} - w_k))(w_{k+1} - w_k)^T}
    {(w_{k+1} - w_k)^T (w_{k+1} - w_k)}.
\end{align}
\textcolor{revblue}{If \(w_{k+1}=w_k\), the secant quotient is not formed because
the iteration has already reached a fixed point of the LRSGA map (see Proposition~\ref{prop:lrsga-fixed-point-zero}). In numerical
implementations, the update is skipped whenever
\(\|w_{k+1}-w_k\|_2\) is below a prescribed tolerance, keeping
\(\mu_{k+1}=\mu_k\) and \(\nu_{k+1}=\nu_k\).}

Now, consider the block structures
$\mu_k = \begin{pmatrix} (\mu_k)_1 & (\mu_k)_2 \end{pmatrix} \in \mathbb{R}^{m \times (m + n)}$ with $(\mu_k)_1 \in \mathbb{R}^{m \times m}$ and $(\mu_k)_2 \in \mathbb{R}^{m \times n}$, and $\nu_k = \begin{pmatrix} (\nu_k)_1 & (\nu_k)_2 \end{pmatrix} \in \mathbb{R}^{n \times (m + n)}$ with $(\nu_k)_1 \in \mathbb{R}^{n \times m}$, $(\nu_k)_2 \in \mathbb{R}^{n \times n}$. Then, by inspection, one can see that the block $(\mu_k)_2$ corresponds 
to $\partial_{xy}^2 f$, and the block $(\nu_k)_1$ corresponds to $\partial_{yx}^2 g$. 
Therefore, we set $M_k = (\mu_k)_2 \in \mathbb{R}^{m \times n}$ and $N_k = (\nu_k)_1 \in \mathbb{R}^{n \times m}$ to define our low-rank SGA method in the 
form \eqref{modifiedGD}, where the matrix $\cM$ is given by
\begin{equation}
    \label{MLRSGA}
\cM_{LRSGA, k}= \begin{pmatrix} I_m & - \frac{\tau}{2}\, (M_k - N_k^T)\\ 
    - \frac{\tau}{2}\, (N_k - M_k^T) & I_n \end{pmatrix} .
\end{equation}
Clearly, we can also write the LRSGA method as follows: 
\begin{equation}
    \label{eq:def_sga_approx}
        w_{k+1} = w_k- \eta \, (I - \tau \, \alpha_k) \, F(w_k), 
\end{equation}
where
$$
        \alpha_k = \begin{pmatrix} 0 &  \textcolor{revblue}{\frac{1}{2}}\, (M_k - N_k^T)\\ 
            \textcolor{revblue}{\frac{1}{2}}\, (N_k - M_k^T) & 0 \end{pmatrix}.
$$
For the purpose of initialization, one can set 
$M_0=\partial_{xy}^2 f(x_0,y_0)$ and $N_0=\partial_{yx}^2 g(x_0,y_0)$, 
or choose random matrix initialization.

In comparison to the SGA method, our LRSGA approach has the great advantage 
of constructing $M_k$ and $N_k$, $k=1,2, \ldots$, by means of rank-one updates (with $\mathcal{O}((m+n)^2)$ additions and multiplications) instead of computing second-order derivatives at the current iterate.
\textcolor{revviolet}{In contrast, the explicit computation of mixed second derivatives
typically requires Hessian-vector products or second-order
backpropagation, whose cost can significantly exceed that of the rank-one update in large neural-network applications.} In particular, in the case of game problems involving 
neural networks, where the computation of second-order derivatives involves 
backpropagation processes, this advantage results in CPU times significantly smaller than in the SGA method. 

On the other hand, the LRSGA method requires more storage, since the matrices $\mu_k$ and $\nu_k$ contain also approximations to the derivatives $\partial_{xx}^2 f$ and $\partial_{yy}^2 g$, which are not used in the SGA method, but could be used in other iterative schemes. 

\textcolor{revblue}{The term low-rank is used here in the update sense: each
least-change secant correction is rank one. The matrices \(\mu_k\) and
\(\nu_k\) are nevertheless stored as dense matrices, so LRSGA should not be
interpreted as a low-rank storage method unless an additional factorized
implementation is introduced.}

\textcolor{revblue}{It is also useful to distinguish LRSGA from Newton and
quasi-Newton methods for the nonlinear system \(F(w)=0\). A Newton step solves
\(H(w_k)d_k=-F(w_k)\), which requires \(\mathcal O((m+n)^3)\) operations for a
dense linear solve, whereas SGA and LRSGA use the explicit fixed-point steps
\[
d_k=-\eta(I-\tau A(w_k))F(w_k),
\qquad
d_k=-\eta(I-\tau\alpha_k)F(w_k),
\]
respectively. Thus LRSGA uses secant information only to approximate the
skew-symmetric correction in this preconditioned gradient-type iteration, not
to approximate the full Jacobian or its inverse. Accordingly, the deterministic
experiments in this work compare first-order dynamics (GDA, its optimistic
variant, and the extragradient method), exact mixed-derivative dynamics (SGA
and linearized CGD), and the secant-based skew-corrected dynamics of LRSGA;
comparisons with GDA-BFGS and other full quasi-Newton
game optimizers are outside the scope of the present study and are left for
future work.}

\section{Convergence analysis of the SGA method}
\label{sec:SGAtheory}

\begin{color}{revblue}
In this section we develop the convergence analysis of the SGA method in two
steps. We first set up an abstract metric framework for the zeros of a nonlinear
operator in a real Hilbert space, isolating the metric properties
(monotonicity, cocoercivity, Lipschitz continuity) that drive the convergence of
the associated fixed point iteration. We then specialize this framework to the
finite-dimensional game on \(\mathbb R^{m+n}\), where it yields a global
convergence result for quadratic games.

The convergence analysis below is organized around a metric fixed point
principle for zero problems. Given an operator \(B\), the equation
\[
Bz=0
\]
can be studied through the explicit fixed point map
\[
T_\eta:=\operatorname{Id}-\eta B,
\qquad \eta>0,
\]
because
\[
z\in\operatorname{Zer}(B)
\quad\Longleftrightarrow\quad
z\in\operatorname{Fix}(T_\eta).
\]
Thus convergence of the zero-finding scheme 
\(
z_{k+1}=T_\eta z_k
\)
 is typically obtained by proving that \(T_\eta\) is nonexpansive, or
more strongly averaged or contractive, on a suitable invariant set. Without
such a metric property the Picard iteration may fail to converge even when the
zeros of the underlying operator have a meaningful variational interpretation.
For the classical gradient method applied to a convex minimization problem,
this metric mechanism is supplied by the Baillon--Haddad theorem: the gradient
of a convex function with Lipschitz continuous gradient is cocoercive, and
cocoercivity of \(B\) implies averaged nonexpansiveness of
\(\operatorname{Id}-\eta B\) for suitable stepsizes; see, e.g.,
\cite[Chapters~17--18]{bauschke2017}.

The Nash equilibrium setting is more delicate. Saddle functions and zero-sum game
problems naturally give rise to monotone, indeed maximal monotone, operators
in the sense of Rockafellar~\cite{rockafellar1970monotone}. However,
monotonicity alone is not a metric convergence principle for the explicit
fixed point map \(\operatorname{Id}-\eta B\). The analysis therefore cannot
stop at monotonicity of the game gradient or of its skew-corrected variant:
one needs cocoercivity, strong monotonicity combined with Lipschitz continuity,
or another condition implying nonexpansiveness/averagedness of the actual
iteration map.

We first study an abstract problem in a real Hilbert space: the computation of
zeros of a nonlinear operator
\[
\mathcal F:U\subset\mathcal H\to\mathcal H,
\qquad
\operatorname{Zer}(\mathcal F):=\{z\in U:\mathcal F(z)=0\}.
\]
The operator \(\mathcal F\) is assumed to be Fr\'echet differentiable on
\(U\), and we denote its Fr\'echet derivative by
\[
\mathcal J(z):=D\mathcal F(z)\in\mathcal L(\mathcal H).
\]
This infinite-dimensional point of view is useful because the convergence
mechanism behind SGA is a metric property of a skew-corrected operator, rather
than a specifically finite-dimensional matrix calculation. Given a
skew-adjoint correction
\[
\mathcal A(z)^*=-\mathcal A(z),
\qquad z\in U,
\]
we introduce
\[
\Gamma_\tau(z):=(\operatorname{Id}-\tau\mathcal A(z))\mathcal F(z).
\]
The corresponding fixed point map which we will study is
$T_{\eta,\tau}:=\operatorname{Id}-\eta\Gamma_\tau.$\\
Since \(\mathcal A(z)^*=-\mathcal A(z)\), we have
\[
\|(\operatorname{Id}-\tau\mathcal A(z))h\|^2
=
\|h\|^2+\tau^2\|\mathcal A(z)h\|^2 \quad\forall h\in\mathcal H,
\]
so \(\operatorname{Id}-\tau\mathcal A(z)\) is injective for every \(z\). Hence
the zero sets coincide:
\begin{equation}\label{eq:ZerGammaF}
\operatorname{Zer}(\Gamma_\tau)=\operatorname{Zer}(\mathcal F).
\end{equation}
Therefore zeros of \(\mathcal F\) can be computed as fixed points of
\(T_{\eta,\tau}\), provided one can prove metric properties of
\(T_{\eta,\tau}\). We obtain such properties from assumptions on
\(\Gamma_\tau\), especially cocoercivity and coercivity/Lipschitz estimates.
We recall the general
Hilbert-space tools first; then we show how the SGA results for finite
dimensional games follow by taking \(\mathcal H=\mathbb R^{m+n}\),
\(\mathcal F=F\), and \(\mathcal J=H=DF\). See, e.g.,
\cite{bauschke2017} for the fixed point background.

\begin{definition}
\label{def:metric-operators-hilbert}
Let \(\mathcal H\) be a real Hilbert space, let \(C\subseteq\mathcal H\), and
let \(T:C\to\mathcal H\). We use the following terminology.
Each property is said to hold locally at \(\bar z\in C\) if it holds on
\(C\cap U\) for some neighbourhood \(U\) of \(\bar z\).
\begin{itemize}
    \item[(i)] \(T\) is monotone if
    \[
    \langle Tz-Tv,z-v\rangle\geq0,
    \qquad z,v\in C.
    \]
    \item[(ii)] \(T\) is \(h\)-strongly monotone, or \(h\)-coercive, with \(h>0\), if
    \[
    \langle Tz-Tv,z-v\rangle\geq h\|z-v\|^2,
    \qquad z,v\in C.
    \]
    \item[(iii)] \(T\) is \(L\)-Lipschitz continuous, with \(L\geq0\), if
    \[
    \|Tz-Tv\|\leq L\|z-v\|,
    \qquad z,v\in C.
    \]
    In particular, the case \(L=1\) is called nonexpansive, while the case
    \(L<1\) is called contractive.
    \item[(iv)] \(T\) is \(\alpha\)-averaged nonexpansive, with
    \(\alpha\in(0,1)\), if there exists a nonexpansive operator
    \(R:C\to C\) such that
    \[
    T=(1-\alpha)\operatorname{Id}+\alpha R .
    \]
    \item[(v)] \(T\) is \(\beta\)-cocoercive, or
    \(\beta\)-inversely strongly monotone, with \(\beta>0\), if
    \[
    \langle Tz-Tv,z-v\rangle\geq
    \beta\|Tz-Tv\|^2,
    \qquad z,v\in C.
    \]
\end{itemize}
\end{definition}

\begin{proposition}
\label{prop:averaged-basic-facts}
Let \(C\) be a nonempty closed convex subset of a real Hilbert space
\(\mathcal H\), and let \(T:C\to C\) be \(\alpha\)-averaged, with
\(\alpha\in(0,1)\). Then \(T\) is nonexpansive and, for every
\(p\in\operatorname{Fix}(T)\),
\[
\|Tx-p\|^2
\leq
\|x-p\|^2-\frac{1-\alpha}{\alpha}\|x-Tx\|^2,
\qquad x\in C.
\]
Consequently, the Picard iteration \(x_{k+1}=Tx_k\) is Fej\'er monotone with
respect to \(\operatorname{Fix}(T)\), whenever this set is nonempty.
\end{proposition}

\textcolor{revblue}{This is a standard property of averaged nonexpansive mappings;
see, e.g., \cite[Proposition~4.25]{bauschke2017}.}

\begin{proposition}
\label{prop:cocoercive-averaged}
Let \(B:C\to\mathcal H\) be \(\beta\)-cocoercive on \(C\). Then, for every
\(\eta\in(0,2\beta)\) such that the forward step
\[
T_\eta:=\operatorname{Id}-\eta B
\]
maps \(C\) into \(C\), the operator \(T_\eta\) is \(\alpha\)-averaged on \(C\),
with
\[
\alpha=\frac{\eta}{2\beta}.
\]
In particular, \(T_\eta\) is nonexpansive. Moreover,
\[
\operatorname{Fix}(T_\eta)=\operatorname{Zer}(B).
\]
\end{proposition}

\textcolor{revblue}{Here we use the usual forward-step averaging result for
cocoercive operators, together with the fixed-point identity for
\(\operatorname{Id}-\eta B\); see, e.g., \cite[Chapter~4]{bauschke2017}.}

\begin{proposition}
\label{prop:avnonexp}
Let \(B:C\to\mathcal H\) be \(L\)-Lipschitz continuous and
\(h\)-strongly monotone. Then, for every
\[
\eta\in\left(0,\frac{2h}{L^2}\right),
\]
the operator \(T_\eta:=\operatorname{Id}-\eta B\) is contractive. More
precisely,
\begin{equation}
\label{eq:contractivity-coefficient}
\|T_\eta z-T_\eta v\|
\leq
\sqrt{1-\eta(2h-\eta L^2)}\,\|z-v\|,
\qquad z,v\in C.
\end{equation}
Consequently, if \(C\subset \mathcal H\) is nonempty, closed, and convex, and
\(T_\eta\) maps \(C\) into itself, then, for every \(x_0\in C\), the iteration
\(x_{k+1}=T_\eta x_k\) converges strongly and linearly to the unique fixed point
of \(T_\eta\) in \(C\), equivalently to the unique zero of \(B\) in \(C\).
\end{proposition}

\textcolor{revblue}{The estimate follows by combining strong monotonicity and
Lipschitz continuity, while the convergence statement is an application of
the Banach fixed point theorem; see, e.g., \cite{kirk}.}

\begin{theorem}
\label{thm:averaged-convergence-rate}
Let \(C\) be a nonempty closed convex subset of a real Hilbert space
\(\mathcal H\), and let \(T:C\to C\) be \(\alpha\)-averaged, with
\(\alpha\in(0,1)\). Assume that \(\operatorname{Fix}(T)\neq\varnothing\),
and define \(x_{k+1}=Tx_k\). Then \(\{x_k\}\) converges weakly to a point
\(p\in\operatorname{Fix}(T)\). Moreover,  the following sharp pointwise residual estimate holds for every
\(N\geq1\),
\begin{equation}
\label{eq:bravo-cominetti-residual}
\|x_N-Tx_N\|
\leq
\frac{2}{\sqrt{\pi}}\,
\frac{\alpha\,d(x_0,\operatorname{Fix}(T))}
{\sqrt{N\alpha(1-\alpha)}}.
\end{equation}
The universal constant \(1/\sqrt{\pi}\) is best possible (see
\cite{bravo2018sharp}). If \(\mathcal H\) is finite-dimensional, then the
convergence is strong.
\end{theorem}

We now state the local estimates for the skew-corrected zero problem in the
same Hilbert space setting. Let \(\mathcal H\) be a real Hilbert space, let
\(U\subset \mathcal H\) be open, and let
\[
\mathcal F:U\to\mathcal H,
\qquad
\mathcal A:U\to\mathcal L(\mathcal H),
\qquad
\mathcal A(z)^*=-\mathcal A(z).
\]
We assume that \(\mathcal F\) is Fr\'echet differentiable and write
\[
\mathcal J(z):=D\mathcal F(z).
\]
The variable skew-corrected operator is
\[
\Gamma_\tau(z):=(\operatorname{Id}-\tau \mathcal A(z))\mathcal F(z).
\]
The finite-dimensional SGA setting will be recovered later by taking
\(\mathcal F=F\), \(\mathcal J=H=DF\), and
\[
H(w)=S(w)+A(w),
\qquad
S(w):=\frac{H(w)+H(w)^T}{2},
\qquad
A(w):=\frac{H(w)-H(w)^T}{2}.
\]

Before applying the metric fixed point tools to this skew-corrected operator,
we record a caution specific to the game setting: the usual second-order
semidefiniteness conditions for Nash equilibria, even together with positive
semidefiniteness of the game Hessian in quadratic-form sense, do not by
themselves imply local cocoercivity of \(\Gamma_\tau\).

\begin{remark}
\label{rem:variable-skew-warning}
The assumptions \(\mathcal S(z)\succeq0\), \(\mathcal A(z)^*=-\mathcal A(z)\),
and \(\mathcal F(z_*)=0\) do not by themselves imply local monotonicity of the
nonlinear field
\(\Gamma_\tau(z)=(\operatorname{Id}-\tau\mathcal A(z))\mathcal F(z)\). In the
variable-skew case the derivative
contains the additional perturbation
\[
D\Gamma_\tau(z)[h]
=(\operatorname{Id}-\tau\mathcal A(z))D\mathcal F(z)h
-\tau D\mathcal A(z)[h]\mathcal F(z),
\]
	and the term \(D\mathcal A(z)[h]\mathcal F(z)\) must be controlled. This obstruction already
	occurs in finite-dimensional examples. Indeed, the following example shows that the structural conditions
		\(\partial_{xx}^2f\succeq0\), \(\partial_{yy}^2g\succeq0\), and
		positive semidefiniteness of the game Hessian in the sense of its
		quadratic form do not imply local cocoercivity of the skew-corrected
		field with variable correction.
	\end{remark}

		\begin{example}
		\label{ex:counterexample-variable-skew}
		Consider the two-player game
		\[
		f(x,y)=\frac{x^2}{2}+xy,
		\qquad
		g(x,y)=xy-\frac{x^3y}{3}+\frac{y^2}{2}.
		\]
		Its game gradient \(\mathcal F:\mathbb R^2\to\mathbb R^2\) is
		\[
		\mathcal F(x,y)=
		\begin{pmatrix}
		x+y\\
		x-\frac{x^3}{3}+y
		\end{pmatrix},
		\qquad \mathcal F(0,0)=0,
		\]
		since \(\partial_x f=\mathcal F_1\) and
		\(\partial_y g=\mathcal F_2\). Moreover, for \(y=0\),
		\(f(x,0)=x^2/2\), and for \(x=0\), \(g(0,y)=y^2/2\). Hence
		\((0,0)\) is a Nash equilibrium in the sense of
		Definition~\ref{DefNE}. Also,
		\[
		\partial_{xx}^2f(x,y)=1,
		\qquad
		\partial_{yy}^2g(x,y)=1.
		\]
		In particular, \(\partial_{xx}^2f(x,y)\succeq0\) and
		\(\partial_{yy}^2g(x,y)\succeq0\).
	The Fr\'echet derivative is
	\[
	\mathcal J(x,y):=D\mathcal F(x,y)=
	\begin{pmatrix}
	1&1\\
	1-x^2&1
	\end{pmatrix}.
	\]
	Writing \(\mathcal J=\mathcal S+\mathcal A\), one has
	\[
	\mathcal S(x,y)=
	\begin{pmatrix}
	1&1-\frac{x^2}{2}\\
	1-\frac{x^2}{2}&1
	\end{pmatrix},
	\qquad
	\mathcal A(x,y)=
	\begin{pmatrix}
	0&\frac{x^2}{2}\\
	-\frac{x^2}{2}&0
	\end{pmatrix}.
	\]
	The eigenvalues of \(\mathcal S(x,y)\) are
	\[
	2-\frac{x^2}{2},
	\qquad
	\frac{x^2}{2},
	\]
	so \(\mathcal S(x,y)\succeq0\) near the origin, and
	\(\mathcal A(x,y)^T=-\mathcal A(x,y)\). Since the skew part does not
	contribute to the quadratic form, for every \(v\in\mathbb R^2\),
	\[
	\langle \mathcal J(x,y)v,v\rangle
	=
	\langle \mathcal S(x,y)v,v\rangle .
	\]
	Hence \(\mathcal J(x,y)\) is positive semidefinite, in the sense of its
	quadratic form, for all \((x,y)\) with \(|x|\leq2\). Fix
	\(\tau>0\) and set
	\[
	\Gamma_\tau(x,y):=(I-\tau \mathcal A(x,y))\mathcal F(x,y).
	\]
	Choose \(k>\frac{1}{2\tau}-1\), define \(u_x=(x,kx)\), and take
	\(h=(1,-1)\). Then \(u_x\to0\) as \(x\to0\), while the product rule gives
	\[
	D\Gamma_\tau(u)[h]
	=(I-\tau \mathcal A(u))\mathcal J(u)h
	-\tau D\mathcal A(u)[h]\mathcal F(u).
	\]
	A direct computation yields
	\[
	\mathcal J(u_x)h=
	\begin{pmatrix}
	0\\
	-x^2
	\end{pmatrix},
	\qquad
	(I-\tau \mathcal A(u_x))\mathcal J(u_x)h=
	\begin{pmatrix}
	\frac{\tau x^4}{2}\\
	-x^2
	\end{pmatrix},
	\]
	and
	\[
	D\mathcal A(u_x)[h]\mathcal F(u_x)=
	\begin{pmatrix}
	(1+k)x^2-\frac{x^4}{3}\\
	-(1+k)x^2
	\end{pmatrix}.
	\]
	Hence
	\[
	\left\langle D\Gamma_\tau(u_x)[h],h\right\rangle
	=
	x^2\bigl(1-2\tau(1+k)\bigr)+\frac56\tau x^4 .
	\]
	Since \(k>\frac{1}{2\tau}-1\), the coefficient
	\(1-2\tau(1+k)\) is negative, and therefore
	\[
	\left\langle D\Gamma_\tau(u_x)[h],h\right\rangle<0
	\]
	for all sufficiently small nonzero \(x\). Thus, in every neighbourhood of
	the origin, the differential of \(\Gamma_\tau\) has a negative quadratic
	direction. Because a \(C^1\) monotone map must satisfy
	\(\langle D\Gamma_\tau(u)[h],h\rangle\geq0\), \(\Gamma_\tau\) is not locally
	monotone near the origin. Consequently, it is not locally cocoercive. This
	shows that one cannot hope to guarantee local cocoercivity without controlling the term
	\(D\mathcal A(z)[h]\mathcal F(z)\).
	\end{example}

We now apply the metric fixed point tools recalled above to
\(\Gamma_\tau\). We first record a basic Lipschitz estimate, and then state
additional sufficient assumptions under which the skew-corrected field becomes
cocoercive or coercive.

\begin{lemma}
\label{lem:local-lipschitz-gammatau}
Assume that \(\mathcal F\) and \(\mathcal A\) are Lipschitz continuous on \(U\),
with constants \(L_{\mathcal F}\) and \(L_{\mathcal A}\), respectively.
Set
\[
M_{\mathcal F}:=\sup_{z\in U}\|\mathcal F(z)\|,
\qquad
M_{\mathcal A}:=\sup_{z\in U}\|\mathcal A(z)\|_{\mathcal L(\mathcal H)},
\]
and assume that \(M_{\mathcal F},M_{\mathcal A}<+\infty\). Then
\(\Gamma_\tau\) is Lipschitz continuous on \(U\). More precisely,
\[
\|\Gamma_\tau(z)-\Gamma_\tau(v)\|
\leq L_{\Gamma}\|z-v\|,
\qquad z,v\in U,
\]
where
\[
L_{\Gamma}:=(1+\tau M_{\mathcal A})L_{\mathcal F}
+\tau L_{\mathcal A} M_{\mathcal F}.
\]
\end{lemma}

\begin{proof}
For \(z,v\in U\), adding and subtracting
\((\operatorname{Id}-\tau \mathcal A(z))\mathcal F(v)\), we obtain
\[
\Gamma_\tau(z)-\Gamma_\tau(v)
=(\operatorname{Id}-\tau \mathcal A(z))(\mathcal F(z)-\mathcal F(v))
-\tau(\mathcal A(z)-\mathcal A(v))\mathcal F(v).
\]
Taking norms gives the assertion.
\end{proof}

\textcolor{revblue}{In view of Example~\ref{ex:counterexample-variable-skew},
local cocoercivity of the skew-corrected field is not a consequence of the
basic second-order semidefiniteness conditions for a Nash equilibrium. 
Thus, as previously stated, one needs to control the perturbation term \(D\mathcal A(z)[h]\mathcal F(z)\) and the following lemma gives sufficient conditions for local cocoercivity and coercivity of \(\Gamma_\tau\) in terms of relative bounds on this perturbation.}

\begin{lemma}
\label{lem:variable-skew-local-cocoercivity-relative}
Let \(\mathcal H\) be a real Hilbert space, let \(U\subset\mathcal H\) be
open, and let \(\mathcal F\in C^1(U;\mathcal H)\). Set
\[
\mathcal J(z):=D\mathcal F(z).
\]
For \(z\in U\), define
\[
\mathcal S(z):=\frac{\mathcal J(z)+\mathcal J(z)^*}{2},
\qquad
\mathcal A(z):=\frac{\mathcal J(z)-\mathcal J(z)^*}{2},
\]
and assume that \(\mathcal A\in C^1(U;\mathcal L(\mathcal H))\) and
\[
\mathcal S(z)\succeq0,\qquad
\mathcal A(z)^*=-\mathcal A(z),
\qquad z\in U.
\]
Set
\[
\Gamma_\tau(z):=(\operatorname{Id}-\tau\mathcal A(z))\mathcal F(z).
\]
Let
\[
s_U:=\sup_{u\in U}\|\mathcal S(u)\|_{\mathcal L(\mathcal H)},
\qquad
a_U:=\sup_{u\in U}\|\mathcal A(u)\|_{\mathcal L(\mathcal H)},
\]
and assume that \(s_U,a_U<+\infty\). Choose \(0<\tau<2/s_U\) when
\(s_U>0\), while \(\tau>0\) when \(s_U=0\). For
\(z\in U\), define
\[
M_\tau(z):=(\operatorname{Id}-\tau\mathcal A(z))\mathcal J(z).
\]
Assume that there exist constants \(0\leq\theta<1\) and \(\rho\geq0\) such
that, for every \(z\in U\) and every \(h\in\mathcal H\),
\begin{equation}
\label{eq:relative-perturbation-inner}
\tau\left|\langle \mathcal F(z),D\mathcal A(z)[h]h\rangle\right|
\leq
\theta\,\langle M_\tau(z)h,h\rangle,
\end{equation}
and
\begin{equation}
\label{eq:relative-perturbation-norm}
\tau\|D\mathcal A(z)[h]\mathcal F(z)\|
\leq
\rho\|M_\tau(z)h\|.
\end{equation}
Then \(\Gamma_\tau\) is cocoercive on every convex subset of \(U\). More precisely,
for every \(z,v\in U\) such that \([v,z]\subset U\),
\[
\langle \Gamma_\tau(z)-\Gamma_\tau(v),z-v\rangle
\geq
\beta_\Gamma
\|\Gamma_\tau(z)-\Gamma_\tau(v)\|^2,
\]
where
\[
\beta_\Gamma
=
\frac{(1-\theta)\tau}
{2(1+\rho)^2(1+\tau^2a_U^2)}.
\]
In particular, if \(z_*\in U\) is a zero of \(\mathcal F\) and there exist
\(r>0\) with \(B_r(z_*)\subset U\) and \(\mu_r>0\) such that
\[
\langle \mathcal S(z)h,h\rangle
\geq
\mu_r\|h\|^2,
\qquad
z\in B_r(z_*),\ h\in\mathcal H,
\]
then, after possibly reducing \(r\), the two relative perturbation assumptions
are automatically satisfied on \(B_r(z_*)\), and therefore \(\Gamma_\tau\) is
locally cocoercive on a sufficiently small neighbourhood of \(z_*\). Moreover,
for \(r>0\) small enough, with
\[
M_r:=\sup_{u\in B_r(z_*)}\|\mathcal J(u)\|_{\mathcal L(\mathcal H)},
\qquad
L_r:=\sup_{u\in B_r(z_*)}\|D\mathcal A(u)\|,
\]
and
\[
L_rM_rr<\frac{\mu_r^2}{2},
\]
one has the explicit coercivity estimate
\[
\langle \Gamma_\tau(z)-\Gamma_\tau(v),z-v\rangle
\geq
m_{\Gamma,r}\|z-v\|^2,
\qquad z,v\in B_r(z_*),
\]
where
\[
m_{\Gamma,r}:=\tau\left(\frac{\mu_r^2}{2}-L_rM_rr\right)>0.
\]
\end{lemma}

\begin{proof}
Fix \(z\in U\) and \(h\in\mathcal H\). Since
\[
\Gamma_\tau(z)=(\operatorname{Id}-\tau\mathcal A(z))\mathcal F(z),
\]
we have
\[
D\Gamma_\tau(z)[h]
=
(\operatorname{Id}-\tau\mathcal A(z))\mathcal J(z)h
-\tau D\mathcal A(z)[h]\mathcal F(z)
=M_\tau(z)h-\tau D\mathcal A(z)[h]\mathcal F(z).
\]
Taking the scalar product with \(h\), and using that
\(D\mathcal A(z)[h]^*=-D\mathcal A(z)[h]\), gives
\[
\begin{aligned}
\langle D\Gamma_\tau(z)[h],h\rangle
&=
\langle M_\tau(z)h,h\rangle
-\tau\langle D\mathcal A(z)[h]\mathcal F(z),h\rangle\\
&=
\langle M_\tau(z)h,h\rangle
+\tau\langle \mathcal F(z),D\mathcal A(z)[h]h\rangle .
\end{aligned}
\]
By \eqref{eq:relative-perturbation-inner},
\[
\langle D\Gamma_\tau(z)[h],h\rangle
\geq
(1-\theta)\langle M_\tau(z)h,h\rangle .
\]
Writing
\(\mathcal J(z)=\mathcal S(z)+\mathcal A(z)\) and using
\(\mathcal A(z)^*=-\mathcal A(z)\), so that
\(\langle\mathcal A(z)h,h\rangle=0\) and
\(\langle\mathcal A(z)^2h,h\rangle=-\|\mathcal A(z)h\|^2\), we obtain
\[
\langle M_\tau(z)h,h\rangle
=\langle\mathcal S(z)h,h\rangle
+\tau\langle\mathcal S(z)h,\mathcal A(z)h\rangle
+\tau\|\mathcal A(z)h\|^2 .
\]
If \(\mathcal S(z)=0\), then \(\mathcal J(z)=\mathcal A(z)\) and
\(\langle M_\tau(z)h,h\rangle=\tau\|\mathcal A(z)h\|^2
=\tau\|\mathcal J(z)h\|^2\) for every \(\tau>0\). If
\(\mathcal S(z)\neq0\), then
\(\langle\mathcal S(z)h,h\rangle\geq
\|\mathcal S(z)h\|^2/\|\mathcal S(z)\|_{\mathcal L(\mathcal H)}\).
Indeed, since \(\mathcal S(z)\) is self-adjoint and positive semidefinite, its
spectrum is contained in
\([0,\|\mathcal S(z)\|_{\mathcal L(\mathcal H)}]\). Hence
\(\mathcal S(z)^2\preceq
\|\mathcal S(z)\|_{\mathcal L(\mathcal H)}\mathcal S(z)\), and therefore
\[
    \|\mathcal S(z)h\|^2
    =
    \langle \mathcal S(z)^2h,h\rangle
    \leq
    \|\mathcal S(z)\|_{\mathcal L(\mathcal H)}
    \langle\mathcal S(z)h,h\rangle .
\]
Moreover, since \(0<\tau<2/s_U\) and
\(\|\mathcal S(z)\|_{\mathcal L(\mathcal H)}\leq s_U\), we have
\(0<\tau<2/\|\mathcal S(z)\|_{\mathcal L(\mathcal H)}\). Thus,
\[
\langle M_\tau(z)h,h\rangle
\geq
\frac{\tau}{2}\|\mathcal S(z)h+\mathcal A(z)h\|^2
+\Bigl(\tfrac{1}{\|\mathcal S(z)\|_{\mathcal L(\mathcal H)}}
-\tfrac{\tau}{2}\Bigr)\|\mathcal S(z)h\|^2
+\frac{\tau}{2}\|\mathcal A(z)h\|^2
\geq
\frac{\tau}{2}\|\mathcal J(z)h\|^2 .
\]
Finally, since \(\mathcal A(z)^*=-\mathcal A(z)\),
\[
\|M_\tau(z)h\|^2
=\|\mathcal J(z)h\|^2+\tau^2\|\mathcal A(z)\mathcal J(z)h\|^2
\leq(1+\tau^2a_U^2)\|\mathcal J(z)h\|^2,
\]
and therefore
\[
\langle M_\tau(z)h,h\rangle
\geq
\frac{\tau}{2}\|\mathcal J(z)h\|^2
\geq
\frac{\tau}{2(1+\tau^2a_U^2)}\|M_\tau(z)h\|^2.
\]
On the other hand,
\[
\|D\Gamma_\tau(z)[h]\|
\leq
\|M_\tau(z)h\|
+\tau\|D\mathcal A(z)[h]\mathcal F(z)\|
\leq
(1+\rho)\|M_\tau(z)h\|.
\]
Combining these estimates yields
\[
\langle D\Gamma_\tau(z)[h],h\rangle
\geq
\frac{(1-\theta)\tau}
{2(1+\rho)^2(1+\tau^2a_U^2)}
\|D\Gamma_\tau(z)[h]\|^2
=
\beta_\Gamma\|D\Gamma_\tau(z)[h]\|^2.
\]
Now let \(z,v\in U\) be such that \([v,z]\subset U\), and set \(h:=z-v\).
By the fundamental theorem of calculus,
\[
\Gamma_\tau(z)-\Gamma_\tau(v)
=
\int_0^1D\Gamma_\tau(v+th)[h]\,dt.
\]
Therefore,
\[
\begin{aligned}
\langle \Gamma_\tau(z)-\Gamma_\tau(v),z-v\rangle
&=
\int_0^1
\langle D\Gamma_\tau(v+th)[h],h\rangle\,dt\\
&\geq
\beta_\Gamma
\int_0^1\|D\Gamma_\tau(v+th)[h]\|^2\,dt .
\end{aligned}
\]
By Jensen's inequality,
\[
\int_0^1\|D\Gamma_\tau(v+th)[h]\|^2\,dt
\geq
\left\|
\int_0^1D\Gamma_\tau(v+th)[h]\,dt
\right\|^2
=
\|\Gamma_\tau(z)-\Gamma_\tau(v)\|^2.
\]
This proves the cocoercivity of \(\Gamma_\tau\) on every convex subset of \(U\).

It remains to justify the final assertion. Let \(z_*\in U\) be such that
\(\mathcal F(z_*)=0\), and assume
\[
\langle \mathcal S(z)h,h\rangle\geq\mu_r\|h\|^2,
\qquad z\in B_r(z_*),\ h\in\mathcal H.
\]
Then
\[
\|\mathcal J(z)h\|\,\|h\|
\geq
\langle \mathcal J(z)h,h\rangle
=
\langle \mathcal S(z)h,h\rangle
\geq
\mu_r\|h\|^2,
\]
so
\[
\|\mathcal J(z)h\|\geq\mu_r\|h\|.
\]
Since \(\mathcal F(z_*)=0\) and \(\mathcal F\in C^1(U;\mathcal H)\),
after possibly reducing \(r>0\), there exists
\[
M_r:=\sup_{u\in B_r(z_*)}\|\mathcal J(u)\|_{\mathcal L(\mathcal H)}<+\infty
\]
such that
\[
\|\mathcal F(z)\|
=
\|\mathcal F(z)-\mathcal F(z_*)\|
\leq
M_r\|z-z_*\|
\leq
M_rr.
\]
Moreover, since
\(\mathcal A\in C^1(U;\mathcal L(\mathcal H))\), after
possibly reducing \(r\), there exists
\[
a_r:=\sup_{u\in B_r(z_*)}\|\mathcal A(u)\|_{\mathcal L(\mathcal H)}<+\infty,
\qquad
L_r:=\sup_{u\in B_r(z_*)}\|D\mathcal A(u)\|<+\infty .
\]
Since \(D\mathcal A(z)[h]\) is skew-adjoint,
\[
\left|\langle \mathcal F(z),D\mathcal A(z)[h]h\rangle\right|
\leq
\|\mathcal F(z)\|\,\|D\mathcal A(z)[h]h\|
\leq
L_rM_rr\|h\|^2.
\]
Using \(\|\mathcal J(z)h\|\geq\mu_r\|h\|\), we obtain
\[
\left|\langle \mathcal F(z),D\mathcal A(z)[h]h\rangle\right|
\leq
\frac{L_rM_rr}{\mu_r^2}\|\mathcal J(z)h\|^2.
\]
Using the same estimate as in the first part of the proof, we get
\[
\langle M_\tau(z)h,h\rangle
\geq
\frac{\tau}{2(1+\tau^2a_r^2)}\|M_\tau(z)h\|^2
\geq
\frac{\tau}{2(1+\tau^2a_r^2)}\|\mathcal J(z)h\|^2,
\]
and hence
\[
\tau\left|\langle \mathcal F(z),D\mathcal A(z)[h]h\rangle\right|
\leq
\frac{2(1+\tau^2a_r^2)L_rM_rr}{\mu_r^2}
\langle M_\tau(z)h,h\rangle.
\]
Thus the first relative perturbation condition holds with
\(\theta_r:=2(1+\tau^2a_r^2)L_rM_rr/\mu_r^2\), which is \(<1\)
for \(r\) small enough.

For the second condition, we estimate
\[
\|D\mathcal A(z)[h]\mathcal F(z)\|
\leq
L_r\|h\|\,\|\mathcal F(z)\|
\leq
L_rM_rr\|h\|
\leq
\frac{L_rM_rr}{\mu_r}\|\mathcal J(z)h\|.
\]
Since \(\mathcal A(z)^*=-\mathcal A(z)\),
\[
\|M_\tau(z)h\|
=
\|(\operatorname{Id}-\tau\mathcal A(z))\mathcal J(z)h\|
\geq
\|\mathcal J(z)h\|.
\]
Therefore
\[
\tau\|D\mathcal A(z)[h]\mathcal F(z)\|
\leq
\rho_r\|M_\tau(z)h\|,
\qquad
\rho_r:=\frac{\tau L_rM_rr}{\mu_r}.
\]
Thus both relative perturbation assumptions are satisfied on \(B_r(z_*)\) for
\(r>0\) sufficiently small. Applying the first part of the lemma on
\(B_r(z_*)\), with \(\theta=\theta_r\) and \(\rho=\rho_r\), gives local
cocoercivity. The stated coercivity constant follows directly from the
differential estimate: the elementary fixed-\(z\) algebraic estimate gives
\[
\left\langle(\operatorname{Id}-\tau\mathcal A(z))\mathcal J(z)h,h\right\rangle
\geq
\frac{\tau}{2}\|\mathcal J(z)h\|^2,
\]
while
\[
\left|\langle D\mathcal A(z)[h]\mathcal F(z),h\rangle\right|
\leq
L_rM_rr\|h\|^2.
\]
Since \(\|\mathcal J(z)h\|\geq\mu_r\|h\|\), we get
\[
\langle D\Gamma_\tau(z)[h],h\rangle
\geq
\tau\left(\frac{\mu_r^2}{2}-L_rM_rr\right)\|h\|^2
=m_{\Gamma,r}\|h\|^2.
\]
Integrating along the segment joining \(v\) and \(z\) yields the estimate with
constant \(m_{\Gamma,r}\).
Thus \(\Gamma_\tau\) is locally coercive on \(B_r(z_*)\).
\end{proof}

\textcolor{revblue}{The next theorem applies the metric fixed-point result to
\(\Gamma_\tau\) under the relative perturbation assumptions
\eqref{eq:relative-perturbation-inner}--\eqref{eq:relative-perturbation-norm}.}

	\begin{theorem}
	\label{thm:variable-skew-cocoercive-weak-convergence}
	In the setting and notation of
	Lemma~\ref{lem:variable-skew-local-cocoercivity-relative}, assume that the
	hypotheses in the first part of that lemma hold on \(U\), including
	\eqref{eq:relative-perturbation-inner} and
	\eqref{eq:relative-perturbation-norm}, with constants
	\(0\leq\theta<1\) and \(\rho\geq0\). Let
	\[
	a_U:=\sup_{u\in U}\|\mathcal A(u)\|_{\mathcal L(\mathcal H)}
	\]
	and set
	\[
	\beta_\Gamma
	=
	\frac{(1-\theta)\tau}
	{2(1+\rho)^2(1+\tau^2a_U^2)}.
	\]
	Let \(C\subset U\) be nonempty, closed, and convex, and assume that
	\[
	\textcolor{revblue}{\operatorname{Zer}(\mathcal F)\cap C\neq\varnothing.}
	\]
	For every \(0<\eta<2\beta_\Gamma\), set
	\[
	T_{\eta,\tau}:=\operatorname{Id}-\eta\Gamma_\tau.
	\]
	If \(T_{\eta,\tau}(C)\subset C\), then \(T_{\eta,\tau}\) is averaged
	nonexpansive on \(C\). Consequently, for every \(z_0\in C\), the
	iteration
	\[
	z_{k+1}=T_{\eta,\tau}z_k
	\]
	converges weakly to a point
	\(
	\bar z\in\textcolor{revblue}{\operatorname{Zer}(\mathcal F)}\cap C.
	\) Moreover, for every \(N\geq1\) the pointwise estimate
	\begin{equation}
        \label{eq:residual}
	\|\mathcal F (z_N)\|
	\leq
	\frac{2d(z_0,\operatorname{Zer}(\mathcal F)\cap C)}{\sqrt{\eta\,\pi(2\beta_\Gamma-\eta)N}}
	\end{equation}
	holds. If \(\mathcal H\) is finite-dimensional, then the convergence is
	strong.
	\end{theorem}

	\begin{proof}
	\textcolor{revblue}{By the zero-set identity~\eqref{eq:ZerGammaF},
	\(\operatorname{Zer}(\Gamma_\tau)=\operatorname{Zer}(\mathcal F)\); in
	particular the assumption \(\operatorname{Zer}(\mathcal F)\cap C\neq\varnothing\)
	is equivalent to \(\operatorname{Zer}(\Gamma_\tau)\cap C\neq\varnothing\), and
	the two zero sets may be used interchangeably below.}
	By Lemma~\ref{lem:variable-skew-local-cocoercivity-relative},
	\(\Gamma_\tau\) is \(\beta_\Gamma\)-cocoercive on \(C\).
	Therefore, by Proposition~\ref{prop:cocoercive-averaged},
	\(T_{\eta,\tau}\) is \(\alpha\)-averaged on \(C\), with
	\[
	\alpha=\frac{\eta}{2\beta_\Gamma},
	\]
	and
	\[
	\operatorname{Fix}(T_{\eta,\tau})
	=
	\operatorname{Zer}(\Gamma_\tau).
	\]
	The convergence follows from Theorem~\ref{thm:averaged-convergence-rate}.
	Since
	\[
	z_k-T_{\eta,\tau}z_k
	=
	\eta\Gamma_\tau(z_k),
	\]
	the stated bound \eqref{eq:residual} follows from
	\eqref{eq:bravo-cominetti-residual}, divided by \(\eta\), with
	\(\alpha=\eta/(2\beta_\Gamma)\) and the fact that $\|\Gamma_\tau(z)\|\geq\|\mathcal F(z)\|$.
	Finally, since \(\Gamma_\tau(\bar z)=0\), the zero-set identity
	\eqref{eq:ZerGammaF} gives \(\mathcal F(\bar z)=0\).
	\end{proof}
    \begin{remark}
        Estimate \eqref{eq:residual} is optimized by choosing
        \(
                \eta=\beta_\Gamma .
        \)
        Indeed, for such value of the stepsize, the
        estimate reduces to
        \[
                \|\mathcal F(z_N)\|
                \leq
                \frac{2d(z_0,\operatorname{Zer}(\mathcal F)\cap C)}{\beta_\Gamma\sqrt{\pi N}},
                \qquad N\geq 1 .
        \]
       
    \end{remark}
	\begin{theorem}
	\label{thm:local-convergence-cocoercive-sga}
	In the setting and notation of Lemma~\ref{lem:variable-skew-local-cocoercivity-relative},
	assume that the hypotheses of its local coercivity alternative hold at
	\(z_*\), and let \(m_{\Gamma,r}>0\) be the coercivity
	constant defined there. Thus
	\[
	\Gamma_\tau(z):=(\operatorname{Id}-\tau\mathcal A(z))\mathcal F(z)
	\]
	is \(m_{\Gamma,r}\)-coercive on \(B_r(z_*)\).
	Let \(C\subset B_r(z_*)\) be nonempty, closed, and convex. Assume that
	\(\Gamma_\tau\) is \(L_\Gamma\)-Lipschitz on \(C\).
	For every
	\[
	0<\eta<\frac{2m_{\Gamma,r}}{L_\Gamma^2},
	\]
	set
	\[
	T_{\eta,\tau}:=\operatorname{Id}-\eta\Gamma_\tau.
	\]
	If \(T_{\eta,\tau}(C)\subset C\), then \(T_{\eta,\tau}\) is a contraction on
	\(C\). Consequently, for every \(z_0\in C\), the iteration
	\[
	z_{k+1}=T_{\eta,\tau}z_k
	\]
	converges strongly and linearly to $z_*$,
	\textcolor{revblue}{that is, the unique zero of \(\mathcal F\) in \(C\).}
	\end{theorem}

	\begin{proof}
	\textcolor{revblue}{By the zero-set identity~\eqref{eq:ZerGammaF},
	\(\operatorname{Zer}(\Gamma_\tau)=\operatorname{Zer}(\mathcal F)\); in
	particular  \(\operatorname{Zer}(\Gamma_\tau)=\{z_*\}\) on \(B_r(z_*)\), and
	the two zero sets may be used interchangeably below.}
	By the local coercivity alternative in
	Lemma~\ref{lem:variable-skew-local-cocoercivity-relative}, applied at
	\(z_*\), we have
	\[
	\langle \Gamma_\tau(z)-\Gamma_\tau(v),z-v\rangle
	\geq
	m_{\Gamma,r}\|z-v\|^2,
	\qquad z,v\in B_r(z_*).
	\]
	Since \(\Gamma_\tau\) is \(L_\Gamma\)-Lipschitz on \(C\), Proposition
	\ref{prop:avnonexp} gives
	\[
	\|T_{\eta,\tau}z-T_{\eta,\tau}v\|
	\leq
	\sqrt{1-\eta(2m_{\Gamma,r}-\eta L_\Gamma^2)}\|z-v\|,
	\qquad z,v\in C.
	\]
    Thus \(T_{\eta,\tau}\) is a contraction on \(C\). Since \(C\) is closed, the Banach fixed point theorem yields strong linear convergence to a unique fixed point of \(T_{\eta,\tau}\) in \(C\).
	Moreover, \(\Gamma_\tau(z_*)=0\) and, finally, \(\mathcal F(z_*)=0\) by~\eqref{eq:ZerGammaF}.
	\end{proof}

\begin{theorem}
\label{thm:frozen-skew-local-convergence-hilbert}
Let \(U\subset\mathcal H\) be open, let
\(\mathcal F:U\to\mathcal H\) be \(C^1\), let
\(\mathcal A:U\to\mathcal L(\mathcal H)\) satisfy
\(\mathcal A(z)^*=-\mathcal A(z)\), and let
\(z_*\in U\) be such that \(\mathcal F(z_*)=0\). Set
\(\mathcal A_*:=\mathcal A(z_*)\) and define the
frozen-skew operator
\[
\Gamma_{\tau,*}(z):=(\operatorname{Id}-\tau\mathcal A_*)\mathcal F(z),
\qquad
T_{\eta,\tau}^*z:=z-\eta\Gamma_{\tau,*}(z).
\]
Assume that
\[
\sigma:=\|DT_{\eta,\tau}^*(z_*)\|_{\mathcal L(\mathcal H)}<1 .
\]
Then there exist \(R>0\) and \(q\in(\sigma,1)\) such that
\(\overline B_R(z_*)\subset U\) and
\begin{equation}\label{eq:qcontraction}
\|T_{\eta,\tau}^*z-T_{\eta,\tau}^*v\|\leq q\|z-v\|,
\qquad z,v\in \overline B_R(z_*).
\end{equation}
Consequently, the iteration
\[
v_{k+1}=T_{\eta,\tau}^*v_k
\]
converges linearly to \(z_*\) for every \(v_0\in \overline B_R(z_*)\).
\end{theorem}

\begin{proof}
Since \(\mathcal A_*=\mathcal A(z_*)\) is fixed, the frozen-skew field has
derivative
\[
D\Gamma_{\tau,*}(z)
=
(\operatorname{Id}-\tau\mathcal A_*)D\mathcal F(z),
\]
and hence
\[
DT_{\eta,\tau}^*(z)
=
\operatorname{Id}
-\eta(\operatorname{Id}-\tau\mathcal A_*)D\mathcal F(z).
\]
Moreover, \(\mathcal F(z_*)=0\) gives
\(\Gamma_{\tau,*}(z_*)=0\), and therefore
\(T_{\eta,\tau}^*z_*=z_*\). Since \(T_{\eta,\tau}^*\) is \(C^1\) and
\(\|DT_{\eta,\tau}^*(z_*)\|=\sigma<1\), there are \(R>0\), with
\(\overline B_R(z_*)\subset U\), and \(q\in(\sigma,1)\) such that
\[
\|DT_{\eta,\tau}^*(z)\|_{\mathcal L(\mathcal H)}\leq q,
\qquad z\in \overline B_R(z_*).
\]
For \(z,v\in \overline B_R(z_*)\), the whole segment
\(v+t(z-v)\), \(t\in[0,1]\), is contained in \(\overline B_R(z_*)\). Thus the
mean-value estimate gives
\[
\|T_{\eta,\tau}^*z-T_{\eta,\tau}^*v\|
\leq
\int_0^1
\|DT_{\eta,\tau}^*(v+t(z-v))(z-v)\|\,dt
\leq q\|z-v\|.
\]
Taking \(v=z_*\) and using \(T_{\eta,\tau}^*z_*=z_*\), we get, for every
\(z\in \overline B_R(z_*)\),
\[
\|T_{\eta,\tau}^*z-z_*\|
\leq q\|z-z_*\|
\leq qR<R.
\]
Thus
\[
T_{\eta,\tau}^*: \overline B_R(z_*)\to \overline B_R(z_*),
\qquad
T_{\eta,\tau}^*z:=z-\eta\Gamma_{\tau,*}(z),
\]
is well-defined on \(\overline B_R(z_*)\) and maps it into itself. Since
\(\overline B_R(z_*)\) is a closed subset of the Hilbert space \(\mathcal H\),
it is complete. The Banach fixed point theorem applied to \(T_{\eta,\tau}^*\)
yields a unique
fixed point in \(\overline B_R(z_*)\) and linear convergence of the iterates to
it. Since \(z_*\) is already a fixed point, this unique fixed point is \(z_*\).
\end{proof}

\end{color}

\begin{color}{revblue}
We now specialize the results proved above to a quadratic game on the
finite-dimensional space \(\mathcal H=\mathbb R^{m+n}\), with \(\mathcal F=F\) and
\(\mathcal J=H=DF\).  For a quadratic game the game Hessian is
constant and the game gradient is affine, \(F(w)=Hw+b\); hence the skew-corrected
operator \(\Gamma_\tau(w)=(I-\tau A)F(w)\) is affine, and the metric estimates of
the previous lemmas, established pointwise, hold globally. Specializing those
estimates, Lemma~\ref{lma:metricproperties} records the cocoercivity and
strong monotonicity of the linear map \(z\mapsto(I-\tau A)Hz\), and
Proposition~\ref{prop:quadratic} then gives global linear convergence of the SGA
method from any initial point in \(\mathbb R^{m+n}\). In this part \(H=H(w)\)
denotes the game Hessian at a fixed point \(w\in\mathbb R^{m+n}\), and
\[
H=S+A,
\qquad
S=\frac{H+H^T}{2},
\qquad
A=\frac{H-H^T}{2}.
\]

\begin{lemma}
\label{lma:metricproperties}
Assume that \(H\succeq0\). The following properties hold.
\begin{itemize}
\item[(a)] If
\[
0<\tau<\frac{2}{\|S\|_2}
\]
when \(S\neq0\), and \(\tau>0\) when \(S=0\), then the linear map
\[
z\mapsto (I-\tau A)Hz
\]
is \(\kappa\)-cocoercive
with
\[
\kappa=\frac{\tau}{2(1+\tau^2\|A\|_2^2)}.
\]
\item[(b)] Assume, in addition, that either
\textcolor{revblue}{\(\lambda_{\min}(S)>0\)  and
\(0<\tau<\dfrac{2\lambda_{\min}(S)}{\|S\|_2^2}\), or \(S=0\), \(H\) is invertible
and \(\tau>0\).} Then
\[
z\mapsto (I-\tau A)Hz
\]
is \(h\)-strongly monotone with
\[
h=\frac{\tau}{2}\sigma_{\min}^2,
\]
where \textcolor{revblue}{\(\lambda_{\min}(S)\) is the smallest eigenvalue of the
symmetric matrix \(S\), and} \(\sigma_{\min}>0\) is the smallest singular value
of \(H\).
\end{itemize}
\end{lemma}

\begin{proof}
\textcolor{revblue}{Since \(A^T=-A\), we have \(\langle Az,z\rangle=0\) and
\(\langle A^2z,z\rangle=-\|Az\|_2^2\); hence, writing \(H=S+A\), the algebraic
decomposition gives}
\[
\langle (I-\tau A)Hz,z\rangle
=\langle Sz,z\rangle+\tau\langle Sz,Az\rangle+\tau\|Az\|_2^2 .
\]
\textcolor{revblue}{We use this identity in both parts.}
For part (a), if \(S=0\), then \(H=A\), and therefore
\[
\langle (I-\tau A)Hz,z\rangle
=\tau\|Az\|_2^2
=\tau\|Hz\|_2^2 .
\]
If \(S\neq0\), then \(S\succeq0\) gives
\(\langle Sz,z\rangle\geq\|Sz\|_2^2/\|S\|_2\). Hence
\[
\begin{aligned}
\langle (I-\tau A)Hz,z\rangle
&\geq
\frac{\tau}{2}\|Sz+Az\|_2^2
+\left(\frac{1}{\|S\|_2}-\frac{\tau}{2}\right)\|Sz\|_2^2
+\frac{\tau}{2}\|Az\|_2^2\\
&\geq
\frac{\tau}{2}\|Hz\|_2^2 .
\end{aligned}
\]
In both cases,
\[
\|(I-\tau A)Hz\|_2^2
\leq
(1+\tau^2\|A\|_2^2)\|Hz\|_2^2,
\]
and part (a) follows. \textcolor{revblue}{For part (b), we use the same identity.}
If \(S=0\), then \(H=A\) and
\[
\langle (I-\tau A)Hz,z\rangle=\tau\|Hz\|_2^2
\geq\tau\sigma_{\min}^2\|z\|_2^2.
\]
If \(S\neq0\), then $H$ is invertible and \(\sigma_{\min}>0\). Since \(S\succeq\lambda_{\min}(S)I\), we have
\[
\begin{aligned}
\langle (I-\tau A)Hz,z\rangle
&\geq
\left(\lambda_{\min}(S)-\frac{\tau}{2}\|S\|_2^2\right)\|z\|_2^2
+\frac{\tau}{2}\|Hz\|_2^2
+\frac{\tau}{2}\|Az\|_2^2  \\
&\geq
\frac{\tau}{2}\sigma_{\min}^2\|z\|_2^2,
\end{aligned}
\]
because \(\tau<2\lambda_{\min}(S)/\|S\|_2^2\). This proves the claim.
\end{proof}

As a first consequence, we recover the global convergence result for quadratic
games. In this case the game Hessian is constant and the game gradient is affine,
namely \(F(w)=Hw+b\).

\begin{proposition}\label{prop:quadratic}
Let \(f\) and \(g\) be quadratic functions such that the game Hessian \(H\) is
positive semidefinite and invertible\textcolor{revblue}{, and such that its symmetric
part \(S\) satisfies the assumption of Lemma~\ref{lma:metricproperties}(b)
(that is, \(S\) is positive definite, or \(S=0\))}. Then there exists a unique SNE point
\(w_*\). Moreover, for any \(\tau\) satisfying
Lemma~\ref{lma:metricproperties}(b), and for every
\[
\eta\in
\left(
0,\frac{\tau\sigma_{\min}^2}
{(1+\tau^2\|A\|_2^2)\|H\|_2^2}
\right),
\]
the sequence generated by the SGA method \eqref{eq:sga_operator_Form}, from any
initial point \(w_0\in\mathbb R^{m+n}\), converges linearly to \(w_*\).
\end{proposition}

\begin{proof}
Since \(F(w)=Hw+b\) and \(H\) is invertible, \(F(w)=0\) has a unique solution
\(w_*\). \textcolor{revblue}{Recall that \(H=DF\) has the block form
\(\left(\begin{smallmatrix}\partial_{xx}^2 f & \partial_{xy}^2 f\\
\partial_{yx}^2 g & \partial_{yy}^2 g\end{smallmatrix}\right)\), so its diagonal
blocks are the own-variable Hessians \(\partial_{xx}^2 f\) (player~1) and
\(\partial_{yy}^2 g\) (player~2). Since \(H\succeq0\), both diagonal blocks are
positive semidefinite, hence \(x\mapsto f(x,y_*)\) and \(y\mapsto g(x_*,y)\) are
convex. Together with \(F(w_*)=0\), this makes \(w_*\) a Nash equilibrium, and
the invertibility of \(H\) makes it isolated; therefore \(w_*\) is a SNE point.}
By Lemma~\ref{lma:metricproperties}(b),
\((I-\tau A)H\) is \(h\)-strongly monotone with
\(h=\tau\sigma_{\min}^2/2\). Moreover,
\[
L=\sqrt{1+\tau^2\|A\|_2^2}\,\|H\|_2
\]
is a Lipschitz constant for \(\Gamma_\tau(w)=(I-\tau A)F(w)\). Hence
Proposition~\ref{prop:avnonexp} applies to \(B=\Gamma_\tau\), and
\(T_{\eta,\tau}=I-\eta\Gamma_\tau\) is a contraction whenever
\(\eta\in(0,2h/L^2)\), which is the stated bound. The Banach fixed point theorem
then gives linear convergence to \(w_*\).
\end{proof}

\begin{remark}
The admissible bound
\[
\tau<\frac{2\lambda_{\min}(S)}{\|S\|_2^2}
\]
in Lemma~\ref{lma:metricproperties}(b) only involves the symmetric part
\(S\). Here \(\lambda_{\min}(S)\) is not the minimum of the real parts of the
eigenvalues of the generally nonsymmetric matrix \(H=S+A\). The quantity \(\lambda_{\min}(S)\) can therefore
be estimated directly from the spectrum of \(S\), or by standard Rayleigh
quotient and Gershgorin-type bounds; see, for instance, \cite{golub2013matrix}.
\end{remark}

\end{color}

\section{Convergence analysis of the LRSGA method}
\label{sec:LRSGAtheory}
\textcolor{revblue}{Returning to the finite-dimensional game on \(\mathbb R^{m+n}\),
this section is devoted to the convergence analysis} of the LRSGA
method given by \eqref{eq:def_sga_approx} with the low-rank updates
resulting from \eqref{eq:UpdateMixedDerivativesF} and
\eqref{eq:UpdateMixedDerivativesG}. \textcolor{revblue}{Unlike the global result for
quadratic games of the previous section, here we} provide a local convergence
result stating that the sequence of LRSGA iterates
converges to \textcolor{revblue}{an} SNE point if the \textcolor{revblue}{initial point} is sufficiently
close to this point and \textcolor{revblue}{the mixed second-order derivative approximations are sufficiently accurate}.
\textcolor{revblue}{The proof is based on controlling, along the iterations, the
distance between the low-rank approximation \(\alpha_k\) and the
antisymmetric part \(A(w_*)\) of the game Hessian at the equilibrium.}

\textcolor{revblue}{We first make explicit that a stationary LRSGA step is already
a zero of the game gradient, so the secant update need not be formed in this
case.}

\begin{proposition}
\label{prop:lrsga-fixed-point-zero}
\textcolor{revblue}{Let \(w_{k+1}\) be generated by the LRSGA iteration
\eqref{eq:def_sga_approx}. If \(w_{k+1}=w_k\), then \(F(w_k)=0\).}
\end{proposition}
\begin{proof}
\textcolor{revblue}{Set
\[
    B_k:=\frac{1}{2}(M_k-N_k^T)\in\mathbb R^{m\times n}.
\]
Then the matrix \(\alpha_k\) in \eqref{eq:def_sga_approx} can be written as
\[
    \alpha_k
    =
    \begin{pmatrix}
        0 & B_k\\
        -B_k^T & 0
    \end{pmatrix}.
\]
Indeed,
\[
    \frac{1}{2}(N_k-M_k^T)
    =
    -\left(\frac{1}{2}(M_k-N_k^T)\right)^T
    =
    -B_k^T .
\]
Therefore
\[
    \alpha_k^T
    =
    \begin{pmatrix}
        0 & -B_k\\
        B_k^T & 0
    \end{pmatrix}
    =
    -\alpha_k,
\]
so \(\alpha_k\) is skew-symmetric. Hence 
\eqref{eq:ZerGammaF} applies to the skew correction
\(\mathcal A\equiv\alpha_k\) and \(\mathcal F=F\).
If \(w_{k+1}=w_k\), then \eqref{eq:def_sga_approx} gives
\[
    (I-\tau\alpha_k)F(w_k)=0.
\]
Therefore \(F(w_k)=0\).}
\end{proof}

\textcolor{revblue}{We now turn to the estimates that keep this approximation error
under control. The first step is to bound the secant matrices themselves.}
In the following lemma, we provide bounds to the norms
$\specnorm{\mu_{k+1} - D \partial_x f(w_*)}$ and
$\specnorm{\nu_{k+1} - D \partial_y g(w_*)}$ in terms of
$\specnorm{\mu_{k} - D \partial_x f(w_*)}$,
$\specnorm{\nu_{k} - D \partial_y g(w_*)}$,\\
$\specnorm{w_{k+1} - w_*}$, and $\specnorm{w_k - w_*}$.

\begin{lemma}
    \label{lemma:BoundUpdates}
    Let $\Omega \subseteq \mathbb{R}^{m + n}$ be a convex domain 
    such that $w_k, w_{k+1}, w_* \in \Omega$.  Assume
    that $D \partial_{x} f$, and $D \partial_{y} g$ are Lipschitz
    continuous in $\Omega$ with Lipschitz constants $\lipDdxf$ and
    $\lipDdyg$, respectively. Let $\mu_k \in \mathbb{R}^{m \times (m + n)}$ and $\nu_k \in \mathbb{R}^{n \times (m + n)}$ be given and assume $\mu_{k+1}$ and $\nu_{k+1}$ are computed by \eqref{eq:UpdateMixedDerivativesF} and \eqref{eq:UpdateMixedDerivativesG}. 
    \textcolor{revblue}{Then the following estimates hold:}
    \begin{align}
        \label{eq:UpdatedMixedDerivativeUpperBoundF}
        \specnorm{\mu_{k+1} - D\partial_x f(w_*)}
        &\leq \specnorm{\mu_{k} - D \partial_x f(w_*)}\\ \nonumber
         &\quad + 2 \lipDdxf \max\{\specnorm{w_{k+1} - w_*}, \specnorm{w_k - w_*}\} \\
        \label{eq:UpdatedMixedDerivativeUpperBoundG}
        \specnorm{\nu_{k+1} - D \partial_y g(w_*)}
        &\leq \specnorm{\nu_{k} - D \partial_y g(w_*)}\\ \nonumber
        & \quad + 2 \lipDdyg \max\{\specnorm{w_{k+1} - w_*}, \specnorm{w_k - w_*}\}
    \end{align}
\end{lemma}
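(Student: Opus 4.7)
The plan is to follow the classical Broyden-style analysis of least-change secant updates, writing the error $\mu_{k+1} - D\partial_x f(w_*)$ as an orthogonal projection of the previous error plus a residual controlled by the Lipschitz constant of $D\partial_x f$. Set $s_k := w_{k+1} - w_k$ and $J_* := D\partial_x f(w_*)$, and let $P_k := s_k s_k^T / (s_k^T s_k)$ be the orthogonal projector onto $\spann(s_k)$; the degenerate case $s_k = 0$ is handled by the convention $\mu_{k+1} = \mu_k$, in which the claimed inequality is trivial. Subtracting $J_*$ from both sides of \eqref{eq:UpdateMixedDerivativesF} and adding $\pm J_* s_k$ inside the numerator, I would derive the decomposition
$$\mu_{k+1} - J_* = (\mu_k - J_*)(I - P_k) + \frac{\bigl(\partial_x f(w_{k+1}) - \partial_x f(w_k) - J_* s_k\bigr)\,s_k^T}{\specnorm{s_k}^2}.$$

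Taking spectral norms, the first term is bounded by $\specnorm{\mu_k - J_*}$ since $I - P_k$ is an orthogonal projector with $\specnorm{I - P_k} = 1$, while the rank-one second term has spectral norm equal to $\specnorm{\partial_x f(w_{k+1}) - \partial_x f(w_k) - J_* s_k}/\specnorm{s_k}$. The only real content is therefore controlling this residual. I would rewrite it via the fundamental theorem of calculus as
$$\partial_x f(w_{k+1}) - \partial_x f(w_k) - J_* s_k = \int_0^1 \bigl(D\partial_x f(w_k + t s_k) - D\partial_x f(w_*)\bigr)\, s_k\, dt,$$
which is legitimate because $\Omega$ is convex and contains $w_k$, $w_{k+1}$, $w_*$. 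Applying the Lipschitz assumption pointwise inside the integral, combined with the triangle-inequality bound $\specnorm{w_k + t s_k - w_*} \leq \specnorm{w_k - w_*} + t\specnorm{s_k}$ and $\specnorm{s_k} \leq 2M$ with $M := \max\{\specnorm{w_k - w_*}, \specnorm{w_{k+1} - w_*}\}$, gives an integrand bounded by $\lipDdxf(1+2t)M\specnorm{s_k}$. Integrating over $t \in [0,1]$ produces the factor $2$ and yields $\specnorm{\partial_x f(w_{k+1}) - \partial_x f(w_k) - J_* s_k} \leq 2\lipDdxf M\specnorm{s_k}$. Dividing by $\specnorm{s_k}$ and combining with the projector estimate delivers \eqref{eq:UpdatedMixedDerivativeUpperBoundF}.

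The bound \eqref{eq:UpdatedMixedDerivativeUpperBoundG} follows by an identical argument with $g$, $\partial_y g$, $\nu_k$, and $\lipDdyg$ in place of $f$, $\partial_x f$, $\mu_k$, and $\lipDdxf$; the structure of the decomposition is unchanged. I do not anticipate a serious obstacle: the projector-plus-residual splitting is a standard ingredient for Broyden-type analyses, and the only subtle point is the choice of bound for $\specnorm{w_k + t s_k - w_*}$ that reproduces the stated constant $2$ — the tighter convex-combination estimate $\specnorm{(1-t)(w_k - w_*) + t(w_{k+1} - w_*)} \leq M$ would in fact give the sharper constant $1$, so the looser triangle-inequality version above is used precisely to land on the factor appearing in the lemma.
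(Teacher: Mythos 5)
Your proof is correct, and it is the same bounded-deterioration argument as the paper's: write the update error as the previous error composed with the orthogonal projector $I - s_k s_k^T/(s_k^T s_k)$ plus a rank-one residual, bound the projector term by $\specnorm{\mu_k - D\partial_x f(w_*)}$, and control the residual via the Lipschitz continuity of $D\partial_x f$. The only difference is where you anchor the linearization: the paper inserts $D\partial_x f(w_k)s_k$ and splits the residual into two pieces, one bounded by $\lipDdxf\specnorm{w_k-w_*}$ (Lipschitz deviation of the Jacobian between $w_k$ and $w_*$) and one bounded by $\tfrac12\lipDdxf\specnorm{w_{k+1}-w_k}$ (standard Taylor remainder at $w_k$), whereas you insert $D\partial_x f(w_*)s_k$ directly and bound the single remaining term by an integral along the segment $[w_k,w_{k+1}]$. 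Both routes land on the constant $2$ after using $\specnorm{w_{k+1}-w_k}\leq 2\max\{\specnorm{w_{k+1}-w_*},\specnorm{w_k-w_*}\}$. Your closing observation is also accurate: estimating $\specnorm{(1-t)(w_k-w_*)+t(w_{k+1}-w_*)}\leq\max\{\specnorm{w_k-w_*},\specnorm{w_{k+1}-w_*}\}$ inside your integral would sharpen the deterioration constant from $2$ to $1$, so the lemma as stated (and as proved in the paper) is not tight; this does not affect its use in the subsequent convergence theorem.
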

\begin{proof}
    \textcolor{revblue}{We show the inequality for \(\mu_{k+1}\); the bound for \(\nu_{k+1}\) is obtained in the same way.}
    The update \eqref{eq:UpdateMixedDerivativesF} of $\mu_k$ can be rewritten as follows: 
    \begin{align*}
        \mu_{k+1} &= \mu_k \left(I_{m+n} - \frac{(w_{k+1} - w_k) (w_{k+1} - w_k)^T}{(w_{k+1} - w_k)^T (w_{k+1} - w_k)} \right) + \frac{(\partial_x f(w_{k+1}) - \partial_x f(w_k)) (w_{k+1} - w_k)^T}{(w_{k+1} - w_k)^T (w_{k+1} - w_k)}.
    \end{align*}
    By adding zero-terms, we obtain
    \begin{align*}
        \specnorm{\mu_{k+1} - D \partial_x f(w_*)}
        &\leq \specnorm{\left(\mu_k - D \partial_x f(w_*) \right) \left(I_{m+n} - \frac{(w_{k+1} - w_k)(w_{k+1} - w_k)^T}{(w_{k+1} - w_k)^T (w_{k+1} - w_k)} \right)} \\
        &\quad+ \specnorm{\left(D \partial_x f(w_k) - D \partial_x f(w_*)\right) \frac{(w_{k+1} - w_k)(w_{k+1} - w_k)^T}{(w_{k+1} - w_k)^T (w_{k+1} - w_k)}} \\
        &\quad+ \specnorm{\frac{ \left(\partial_x f(w_{k+1}) - \partial_x f(w_k)
                - D \partial_x f(w_k) (w_{k+1} - w_k)
            \right)
            (w_{k+1} - w_k)^T}{(w_{k+1} - w_k)^T (w_{k+1} - w_k)}} \\
        &\leq \specnorm{\mu_k - D \partial_x f(w_*) } 
        + \lipDdxf \specnorm{w_k - w_*} 
        + \frac{1}{2} \lipDdxf \specnorm{w_{k+1} - w_k},
    \end{align*}
    where we have used $\specnorm{I_{m+n} - \frac{ss^T}{s^T s}} \leq 1$ for $s \in \mathbb{R}^{m+n}$, and the Lipschitz continuity of $D \partial_x f$.
    From $\specnorm{w_{k+1} - w_k} \leq \specnorm{w_{k+1} - w_*} + \specnorm{w_k - w_*} \leq 2 \max\{\specnorm{w_{k+1} - w_*}, \specnorm{w_k - w_*}\}$, we obtain (\ref{eq:UpdatedMixedDerivativeUpperBoundF}).
\end{proof}

Next, we obtain a bound for $\specnorm{\alpha_k - A(w_*)}$. 

\begin{lemma}
    \label{lemma:PropertiesSpectralNormBlockMatrices}
    Let $w_* \in \Omega$ and assume there is a $\delta > 0$ such that \(\specnorm{\mu_k - D \partial_x f(w_*)} \leq \delta\) and \(\specnorm{\nu_k - D \partial_y g(w_*)} \leq \delta\) for matrices $\mu_k \in \mathbb{R}^{m \times (m+n)}$ and $\nu_k \in \mathbb{R}^{n \times (m+n)}$. Then, the matrix $\alpha_k$ defined in (\ref{eq:def_sga_approx}), with $M_k \in \mathbb{R}^{m \times n}$ \textcolor{revblue}{being the right block of} $\mu_k$ and $N_k \in \mathbb{R}^{n \times m}$ \textcolor{revblue}{being the left block of} $\nu_k$, satisfies the inequality $\specnorm{\alpha_k - A(w_*)} \leq \delta$. 
\end{lemma}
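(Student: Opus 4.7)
The plan is to reduce the bound on $\specnorm{\alpha_k - A(w_*)}$ to a bound on a single off-diagonal block. First I would unpack the expression
\[
    \alpha_k - A(w_*) = \begin{pmatrix} 0 & P \\ -P^T & 0 \end{pmatrix},
    \qquad
    P := \tfrac{1}{2}\bigl( (M_k - \partial_{xy}^2 f(w_*)) - (N_k - \partial_{yx}^2 g(w_*))^T \bigr),
\]
using the explicit block forms of $\alpha_k$ and $A(w_*)$ given earlier. This reveals that the error matrix is block anti-symmetric with a single independent off-diagonal block $P$.

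Next, I would observe the standard identity $\specnorm{\begin{psmallmatrix} 0 & P \\ -P^T & 0 \end{psmallmatrix}} = \specnorm{P}$, which is obtained by computing $\begin{psmallmatrix} 0 & P \\ -P^T & 0 \end{psmallmatrix}^T \begin{psmallmatrix} 0 & P \\ -P^T & 0 \end{psmallmatrix} = \begin{psmallmatrix} PP^T & 0 \\ 0 & P^T P \end{psmallmatrix}$ and taking the square root of the spectral radius, noting that $PP^T$ and $P^TP$ share the same nonzero eigenvalues. Hence $\specnorm{\alpha_k - A(w_*)} = \specnorm{P}$.

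To bound $\specnorm{P}$, I would use the fact that $M_k - \partial_{xy}^2 f(w_*)$ is precisely the right block of $\mu_k - D\partial_x f(w_*)$, and likewise $N_k - \partial_{yx}^2 g(w_*)$ is the left block of $\nu_k - D\partial_y g(w_*)$. Since extracting a contiguous column-block from a matrix does not increase its spectral norm (the submatrix acts as the full matrix composed with an inclusion of subspace), both blocks have spectral norm at most $\delta$ by hypothesis. A triangle inequality combined with the invariance $\specnorm{Q^T} = \specnorm{Q}$ then yields
\[
    \specnorm{P} \leq \tfrac{1}{2}\bigl( \specnorm{M_k - \partial_{xy}^2 f(w_*)} + \specnorm{N_k - \partial_{yx}^2 g(w_*)} \bigr) \leq \tfrac{1}{2}(\delta + \delta) = \delta,
\]
which is the desired conclusion.

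The argument is essentially bookkeeping: the only nontrivial point is the block-antisymmetric spectral-norm identity, which requires a brief justification. The submatrix norm inequality and the triangle inequality are routine, so I do not anticipate a substantial obstacle.
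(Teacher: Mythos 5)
Your proposal is correct and follows essentially the same route as the paper's proof: the same block anti-symmetric decomposition of $\alpha_k - A(w_*)$, the same spectral-norm identity $\specnorm{\begin{pmatrix} 0 & P \\ -P^T & 0 \end{pmatrix}} = \specnorm{P}$ justified via the block-diagonal product, and the same combination of the submatrix norm bound with the triangle inequality. No gaps.
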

\begin{proof}
    The spectral norm of a matrix $B \in \mathbb{R}^{n \times n}$ is defined by
    \begin{equation*}
        \specnorm{B}^2 = \varrho(B B^T) = \varrho(B^T B) = \max_{i = 1, \ldots, n} \lambda_i(B^T B),
    \end{equation*}
    where $\lambda_i(B^T B)$, $i = 1, \ldots, n$, are the eigenvalues of $B^T B$.

    Thus, for a matrix $C \in \mathbb{R}^{m \times n}$, \textcolor{revblue}{we have}
    \begin{align*}
        \specnorm{\begin{pmatrix} 0 & C \\ -C^T & 0 \end{pmatrix}}^2
        &= \varrho\!\left(\begin{pmatrix} 0 & -C \\ C^T & 0 \end{pmatrix}\begin{pmatrix} 0 & C \\ -C^T & 0 \end{pmatrix}\right)
        = \varrho\begin{pmatrix} C C^T & 0 \\ 0 & C^T C \end{pmatrix} \\
        &= \max\{\varrho(C C^T), \varrho(C^T C)\}
        = \varrho(C C^T) = \specnorm{C}^2,
    \end{align*}
    since the eigenvalues of a block-diagonal matrix are the eigenvalues of the diagonal blocks, and \(\varrho(CC^T)=\varrho(C^TC)\).

    Notice that \(\alpha_k - A(w_*)\) is an anti-symmetric matrix of the form
    \(\textcolor{revblue}{\begin{pmatrix} 0 & C \\ -C^T & 0 \end{pmatrix}}\) with
    $C = \frac{1}{2}(M_k - \partial_{xy}^2 f(w_*) - N_k^T + \partial_{yx}^2 g(w_*)^T)$. Thus, we have
    \begin{align*}
        \specnorm{\alpha_k - A(w_*)}
        &= \specnorm{\frac{1}{2}(M_k - \partial_{xy}^2 f(w_*) - N_k^T + \partial_{yx}^2 g(w_*)^T)} \\
        &\leq \frac{1}{2} (\specnorm{M_k - \partial_{xy}^2 f(w_*)} + \specnorm{N_k^T - \partial_{yx}^2 g(w_*)^T}) \\
        &\leq \frac{1}{2} (\specnorm{\mu_k - D \partial_{x} f(w_*)} + \specnorm{\nu_k^T - D \partial_{y} g(w_*)^T}) \\
        &\leq \frac{1}{2} (\delta + \delta) = \delta.
    \end{align*}
    \textcolor{revblue}{This proves the claim.}
\end{proof}

\begin{color}{revblue}
We now use these lemmas to obtain a local linear convergence result for the
LRSGA scheme. Here, the main object is the frozen SGA map
obtained by freezing the skew correction at the equilibrium. Indeed, let
\(w_*=(x_*,y_*)\) be a SNE point, and define
\(\fplrsga:\mathbb R^{m+n}\to\mathbb R^{m+n}\) by
\begin{equation}
    \label{eq:sga_approx_fixed_point_function}
    \fplrsga(v)
    :=
    v
    - \eta (I - \tau A(w_*) ) \, F(v) .
\end{equation}
The associated iteration is \(v_{k+1}=\fplrsga(v_k)\), which is exactly
the finite-dimensional instance of \(T_{\eta,\tau}^*\) in
Theorem~\ref{thm:frozen-skew-local-convergence-hilbert}, with
\(\mathcal F=F\) and \(\mathcal A_*=A(w_*)\), and \(w_*\) is a fixed point of
\(\fplrsga\).

The LRSGA sequence is instead the sequence \(\{w_k\}\) generated by
\eqref{eq:def_sga_approx}, where \(\alpha_k\) is produced by the secant
updates. 

More precisely, in the proof below, \(\fplrsga\) is used only as a comparison map:
the one-step discrepancy
\(\|w_{k+1}-\fplrsga(w_k)\|\) is controlled by
\(\|\alpha_k-A(w_*)\|\). Thus the contractivity property of the frozen map will supply the
metric part of the argument, while the low-rank update is treated as a
controlled perturbation of that map.
\end{color}

\begin{theorem}
    \label{theorem:sga-approx-conv-specnrmbound} 
    Let $\Omega \subseteq \mathbb{R}^{m + n}$ be a convex domain and 
    $w_* \in \Omega$ be \textcolor{revblue}{an} SNE point. Let the map 
    $\fplrsga$ defined in \eqref{eq:sga_approx_fixed_point_function}
    be in $C^1(\Omega, \mathbb{R}^{m + n})$.  
    Assume that the game gradient $F$ is Lipschitz continuous in $\Omega$
    with Lipschitz constant $L_F$, and $D \partial_{x} f$, and
    $D \partial_{y} g$ are Lipschitz continuous in $\Omega$ with
    Lipschitz constants $\lipDdxf$, and $\lipDdyg$, respectively. 
    Assume that the spectral norm of the Jacobian $D \fplrsga$ satisfies
    $\sigma = \specnorm{D \fplrsga(w_*)} < 1$.

    \textcolor{revblue}{Then there exist} parameters $\delta_0, R > 0$ such that
    \textcolor{revblue}{\(\overline B_R(w_*) \subseteq \Omega\)}, and for any initialization $w_0$, $\mu_0$, and $\nu_0$ satisfying
    $w_0 \in B_R(w_*)$,
    $\specnorm{\mu_0 - D \partial_x f(w_*)} \leq \delta_0$, and
    $\specnorm{\nu_0 - D \partial_y \textcolor{revblue}{g}(w_*)} \leq \delta_0$,
    the sequence $\{ w_k \}$ \textcolor{revblue}{generated by the LRSGA iteration
    \eqref{eq:def_sga_approx}, with \(\mu_k\) and \(\nu_k\) updated by
    \eqref{eq:UpdateMixedDerivativesF}--\eqref{eq:UpdateMixedDerivativesG},
    converges linearly to \(w_*\).}
\end{theorem}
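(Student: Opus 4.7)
The plan is to prove linear convergence via a simultaneous induction on the iterate error $\|w_k - w_*\|$ and on the approximation errors $\|\mu_k - D\partial_x f(w_*)\|$ and $\|\nu_k - D\partial_y g(w_*)\|$. The guiding observation is that one LRSGA step differs from one step of the auxiliary map $\fplrsga$ only by the perturbation $\eta\tau(\alpha_k - A(w_*))F(w_k)$, which is small because $F(w_k)$ is Lipschitz controlled by $\|w_k - w_*\|$ and, by \cref{lemma:PropertiesSpectralNormBlockMatrices}, $\|\alpha_k - A(w_*)\|$ is controlled by the accuracy of the rank-one approximations $\mu_k$ and $\nu_k$. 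Hence, as long as these approximations remain accurate, LRSGA inherits a local contraction near $w_*$ from $\fplrsga$.

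First I would exploit that $\fplrsga \in C^1(\Omega,\R^{m+n})$ with $\fplrsga(w_*) = w_*$ and $\sigma = \specnorm{D\fplrsga(w_*)} < 1$. Fix $q \in (\sigma, 1)$. By continuity of the Jacobian, there exists $R_1 > 0$ with $B_{R_1}(w_*) \subseteq \Omega$ on which $\specnorm{\fplrsga(w) - w_*} \leq q\,\specnorm{w - w_*}$. Writing
\[
w_{k+1} - w_* = \bigl(\fplrsga(w_k) - w_*\bigr) - \eta\tau\bigl(\alpha_k - A(w_*)\bigr)F(w_k),
\]
using $F(w_*) = 0$ together with the Lipschitz bound $\specnorm{F(w_k)} \leq L_F\specnorm{w_k - w_*}$, and applying \cref{lemma:PropertiesSpectralNormBlockMatrices}, I would obtain
\[
\specnorm{w_{k+1} - w_*} \leq (q + \eta\tau\delta L_F)\,\specnorm{w_k - w_*}
\]
whenever $w_k \in B_{R_1}(w_*)$ and $\max\{\specnorm{\mu_k - D\partial_x f(w_*)},\specnorm{\nu_k - D\partial_y g(w_*)}\} \leq \delta$. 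I would then pick $\delta^* > 0$ small enough that $\tilde{q} := q + \eta\tau\delta^* L_F < 1$; the standing assumption $\eta\tau L_F < 1$ ensures that $\delta^*$ can be taken bounded away from zero.

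Next I would close the induction. The claim is that for $R \leq R_1$ and $\delta_0$ both sufficiently small, any initialization with $w_0 \in B_R(w_*)$, $\specnorm{\mu_0 - D\partial_x f(w_*)} \leq \delta_0$, and $\specnorm{\nu_0 - D\partial_y g(w_*)} \leq \delta_0$ yields $\specnorm{w_k - w_*} \leq \tilde{q}^k\specnorm{w_0 - w_*}$ and $\specnorm{\mu_k - D\partial_x f(w_*)},\specnorm{\nu_k - D\partial_y g(w_*)} \leq \delta^*$ for all $k$. The decay of the iterate error is exactly the perturbation bound above. For the Jacobian approximations, \cref{lemma:BoundUpdates} together with $\specnorm{w_{k+1}-w_*} \leq \specnorm{w_k - w_*}$ gives
\[
\specnorm{\mu_{k+1} - D\partial_x f(w_*)} \leq \specnorm{\mu_k - D\partial_x f(w_*)} + 2\lipDdxf\,\tilde{q}^k\specnorm{w_0 - w_*},
\]
and telescoping over $k$ produces
\[
\specnorm{\mu_{k+1} - D\partial_x f(w_*)} \leq \delta_0 + \frac{2\lipDdxf R}{1 - \tilde{q}}.
\]
Choosing $\delta_0 \leq \delta^*/2$ and $R \leq \delta^*(1-\tilde{q})/\bigl(4\max\{\lipDdxf,\lipDdyg\}\bigr)$ keeps the right-hand side $\leq \delta^*$; the analogous estimate for $\nu_k$ is obtained identically.

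The main obstacle is the careful ordering of the constants: one must fix $q \in (\sigma, 1)$ first, then $\delta^*$ small enough for strict contraction, then $R$ and $\delta_0$ small enough that the monotonically accumulating approximation errors from \cref{lemma:BoundUpdates} never break the $\delta^*$ budget over an infinite iteration. This scheme is viable only because $\|w_k - w_*\|$ decays geometrically, turning the additive update errors into a convergent geometric series. A secondary technical point is to check that $w_k, w_{k+1} \in \Omega$ at every step so that \cref{lemma:BoundUpdates,lemma:PropertiesSpectralNormBlockMatrices} apply; this is automatic once the inductive contraction bound confines all iterates to $B_R(w_*) \subseteq \Omega$.
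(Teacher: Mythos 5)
Your proposal is correct and follows essentially the same route as the paper: the same decomposition of an LRSGA step into an $\fplrsga$ step plus the perturbation $\eta\tau(\alpha_k - A(w_*))F(w_k)$, the same use of \cref{lemma:BoundUpdates,lemma:PropertiesSpectralNormBlockMatrices}, and the same simultaneous induction with a telescoping geometric series controlling the accumulated secant-update error. The only cosmetic difference is that you package the local contraction of $\fplrsga$ via continuity of its Jacobian, whereas the paper bounds the linearization error by $\epsilon\specnorm{w-w_*}$ and splits $q = \sigma + \epsilon + \delta$ explicitly.
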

\begin{proof}
    Since \(w_*\) is an SNE point, \(F(w_*)=0\) \textcolor{revblue}{ and we may assume that \(w_*\) is the unique zero in a sufficiently small neighborhood.}
    \textcolor{revblue}{ As a consequence,} \(w_*\) is \textcolor{revblue}{the unique} fixed
    point of the frozen map \(\fplrsga\). Moreover,
    \(I-\tau A(w_*)\) is invertible because \(A(w_*)\) is skew-symmetric, and
    the \(C^1\) regularity of \(\fplrsga\) gives the required \(C^1\)
    regularity of \(F\) near \(w_*\). Applying
    formula \eqref{eq:qcontraction} from Theorem~\ref{thm:frozen-skew-local-convergence-hilbert} with
    \(\mathcal H=\mathbb R^{m+n}\), \(\mathcal F=F\),
    \(\mathcal A_*=A(w_*)\), and \(z_*=w_*\), we obtain
    \(R_*>0\) and \(q_*\in(\sigma,1)\) such that
    \begin{equation}
        \label{eq:ProofLRSGAFrozenContraction}
        \specnorm{\fplrsga(w)-\fplrsga(v)}
        \leq
        q_*\specnorm{w-v},
        \qquad
        w,v\in \overline B_{R_*}(w_*).
    \end{equation}
    Choose \(q\in(q_*,1)\) and then \(\delta>0\) such that
    \[
        \eta\tau L_F\delta\le q-q_* .
    \]
    By reducing \(R_*\), if necessary, we can choose \(R,\delta_0>0\) such that
    \(0<R\le R_*\), \textcolor{revblue}{\(\overline B_R(w_*)\subseteq\Omega\)}, and
    \[
        \delta_0+\frac{2R}{1-q}\max\{\lipDdxf,\lipDdyg\}\le\delta .
    \]

    Now, we show that, for every $K \geq 0$, the following inequalities hold:
    \begin{equation}
        \label{eq:ProofLRSGAInductionProp}
        \specnorm{w_k - w_*}\leq q^k \specnorm{w_0 - w_*}
        \quad\text{and}\quad
        \specnorm{\alpha_k - A(w_*)}\leq \delta
    \end{equation}
    for $k = 0, \ldots, K$. The proof is by induction, assuming that the initial values $w_0$, $\mu_0$, and $\nu_0$ satisfy $\specnorm{w_0 - w_*} < R$,
    $\specnorm{\mu_0 - D \partial_x f(w_*)} \leq \delta_0$, and
    $\specnorm{\nu_0 - D \partial_y \textcolor{revblue}{g}(w_*)} \leq \delta_0$.

    For $K = 0$, we have
    $\specnorm{w_0 - w_*} \leq 1 \specnorm{w_0 - w_*} = q^0
    \specnorm{w_0 - w_*}$ and from
    Lemma~\ref{lemma:PropertiesSpectralNormBlockMatrices}, we have $\specnorm{\alpha_0 - A(w_*)} \leq \delta_0 \leq \delta$ since $\delta_0$ satisfies $\delta_0 \leq \delta$.

    Next, assume for a $K \geq 0$ the inequalities from (\ref{eq:ProofLRSGAInductionProp}) hold for $k = 0, \ldots, K$.
    Since $q < 1$, \textcolor{revblue}{we have} $w_k \in B_R(w_*)$, $k = 0, \ldots, K$. From the Lipschitz continuity of $F$, we have
    \begin{equation*}
        \specnorm{F(w_k)}
        = \specnorm{F(w_k) - F(w_*)}
        \leq L_F \specnorm{w_k - w_*}, \quad k = 0, \ldots, K.
    \end{equation*}
    \textcolor{revblue}{Thus}
    \begin{equation*}
        \specnorm{w_{k+1} - \fplrsga(w_k)} = \specnorm{\eta \tau (\alpha_k - A(w_*)) F(w_k)}
        \leq \eta \tau \delta L_F \specnorm{w_k - w_*}, \quad k = 0, \ldots, K.
    \end{equation*}
    Combining this upper bound with the contraction estimate
    \eqref{eq:ProofLRSGAFrozenContraction} from
    Theorem~\ref{thm:frozen-skew-local-convergence-hilbert}, and using
    \(\fplrsga(w_*)=w_*\), we obtain
    \begin{align*}
        \specnorm{w_{k+1} - w_*}
        &\leq \specnorm{w_{k+1} - \fplrsga(w_k)}
        + \specnorm{\fplrsga(w_k) - \fplrsga(w_*)} \\
        &\leq \eta \tau \delta L_F \specnorm{w_k - w_*}
        + q_*\specnorm{w_k - w_*} \\
        &\leq q \specnorm{w_k - w_*} \\
        &\leq q^{k+1} \specnorm{w_0 - w_*}, \quad k = 0, \ldots, K.
    \end{align*}
    \textcolor{revblue}{Thus, \(w_{k+1}\in B_R(w_*)\) as well.}

    Hence, from Lemma~\ref{lemma:BoundUpdates} for $k = 0, \ldots, K$, we have
    \begin{align*}
        \specnorm{\mu_{k+1} - D \partial_x f(w_*)}
        &\leq \specnorm{\mu_{k} - D \partial_x f(w_*)}\\
        & \quad + 2 \lipDdxf \max\{\specnorm{w_k - w_*}, \specnorm{w_{k+1} - w_*} \}\\
        &\leq \specnorm{\mu_{0} - D \partial_x f(w_*)} \\ 
        & \quad + 2 \lipDdxf \sum_{\ell = 0}^{k} \max\{\specnorm{w_\ell - w_*}, \specnorm{w_{\ell+1} - w_*} \}\\
        &\leq \specnorm{\mu_{0} - D \partial_x f(w_*)} + 2 \lipDdxf \sum_{\ell = 0}^{k} q^\ell \specnorm{w_{0} - w_*} \\
        &\leq \delta_0
        + 2 \lipDdxf \frac{R}{1-q} 
        \leq \delta. 
    \end{align*}
    Similarly, we have
    $\specnorm{\nu_{k+1} - D \partial_y g(w_*)} \leq \delta$ and thus,
    as in the case $K = 0$, the upper bound
    \[\specnorm{\alpha_{k+1} - A(w_*)} \leq \delta,\] for
    $k = 0, \ldots, K$, follows from
    Lemma~\ref{lemma:PropertiesSpectralNormBlockMatrices}. Together
    with the property for $K = 0$ the inequalities
    (\ref{eq:ProofLRSGAInductionProp}) hold for $k = 0, \ldots, K + 1$.

    By induction, we have that
    $\norm{w_k - w_*} \leq q^k \norm{w_0 - w_*}$ holds for all
    $k \geq 0$. Thus, the sequence $\{ w_k \}$ converges to $w_*$ at a
    linear rate.
    \textcolor{revblue}{Observe that, if \(w_{k+1}=w_k\) for some \(k\),
    Proposition~\ref{prop:lrsga-fixed-point-zero} gives \(F(w_k)=0\). Since
    \(w_k\in  \overline B_{R_*}(w_*)\) and being \(w_*\) the unique zero of \(F\) in \(\overline B_{R_*}(w_*)\), it follows that
    \(w_k=w_*\).}
\end{proof}

\begin{remark}
\textcolor{revblue}{The convergence statement above is local. The initial point
must belong to a neighbourhood of \(w_*\) where the frozen-skew comparison map
\(\fplrsga\) satisfies the contraction estimate
\eqref{eq:ProofLRSGAFrozenContraction}, and where the secant approximation error
\(\|\alpha_k-A(w_*)\|\) remains small enough to be absorbed in the contraction
estimate. The theorem therefore does not provide a global initialization
criterion or a globalization mechanism. The use of  strategies such as
line-search, or restarts of the
secant matrices requires a separate analysis and is left for future work.}
\end{remark}

\begin{corollary}
    \label{cor:sga_approx_convergence}
    Let $\Omega \subseteq \mathbb{R}^{m + n}$ be a convex domain,  let
    $f \in C^3(\Omega, \mathbb{R})$ and
    $g \in C^3(\Omega, \mathbb{R})$, and suppose that $w_* \in \Omega$ is \textcolor{revblue}{an} SNE point \textcolor{revblue}{whose symmetric part $S(w_*)$ is positive definite}.
    Then there exist $R > 0$ and $\delta_0 > 0$, such that
    choosing $w_0 \in B_R(w_*)$, and $\mu_0$, $\nu_0$ satisfying 
    $\specnorm{\mu_0 - D \partial_x f(w_0)} \leq \delta_0$ and
    $\specnorm{\nu_0 - D \partial_y g(w_0)} \leq \delta_0$, the
    iterates $\{w_k \}$ given by the LRSGA method \eqref{eq:def_sga_approx} converge 
    linearly to $w_*$ for appropriately chosen \(\eta, \tau > 0\).
\end{corollary}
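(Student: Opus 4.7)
The plan is to derive this corollary by verifying the hypotheses of \Cref{theorem:sga-approx-conv-specnrmbound} under the stronger $C^3$ assumption, choosing $\eta$ and $\tau$ carefully using the metric properties from \Cref{lma:metricproperties} and \Cref{prop:avnonexp}. First I would fix a bounded convex subdomain $\Omega_0 \Subset \Omega$ containing $w_*$ in its interior. Since $f, g \in C^3(\Omega, \mathbb{R})$, the game gradient $F$ is $C^2$ and therefore Lipschitz on $\Omega_0$ with some constant $L_F$; similarly, $D \partial_x f$ and $D \partial_y g$ are $C^1$, hence Lipschitz on $\Omega_0$ with constants $\lipDdxf$ and $\lipDdyg$. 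This gives me the Lipschitz hypotheses of \Cref{theorem:sga-approx-conv-specnrmbound} for free on $\Omega_0$.

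Next, I would choose $\tau > 0$ according to \Cref{lma:metricproperties}(b) applied at $w_*$, so that the linear operator $w \mapsto (I - \tau A(w_*)) H(w_*) w$ is $h$-strongly monotone with $h = \frac{\tau}{2}\sigma_{\min}^2 > 0$ (using that $H(w_*)$ is positive semi-definite and invertible, which is guaranteed because $w_*$ is a SNE point). This operator is also Lipschitz with constant $L = \sqrt{1 + \tau^2 \specnorm{A(w_*)}^2}\,\specnorm{H(w_*)}$. Applying \Cref{prop:avnonexp} to this linear, affinely translated operator, for any $\eta \in \left(0, \frac{2h}{L^2}\right)$ the map $I - \eta (I - \tau A(w_*)) H(w_*)$ is a contraction on $\mathbb{R}^{m+n}$; its operator norm equals $\sigma := \specnorm{D\fplrsga(w_*)} < 1$, since $D\fplrsga(w_*) = I - \eta (I - \tau A(w_*)) H(w_*)$. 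By possibly shrinking $\eta$ further, I also enforce $\eta \tau L_F < 1$. The $C^1$ regularity of $\fplrsga$ on $\Omega_0$ is immediate from $F \in C^2$ and the fact that $A(w_*)$ is constant.

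With all hypotheses of \Cref{theorem:sga-approx-conv-specnrmbound} verified on $\Omega_0$, that theorem provides parameters $\tilde R > 0$ and $\tilde\delta_0 > 0$ such that LRSGA converges linearly whenever $w_0 \in B_{\tilde R}(w_*)$ and $\specnorm{\mu_0 - D\partial_x f(w_*)}, \specnorm{\nu_0 - D\partial_y g(w_*)} \leq \tilde\delta_0$. The final step is to translate the hypothesis stated in the corollary, which measures the initial secant approximations against $w_0$ instead of $w_*$, into the form required by the theorem. Using the triangle inequality and Lipschitz continuity of $D\partial_x f$,
\begin{equation*}
\specnorm{\mu_0 - D\partial_x f(w_*)} \le \specnorm{\mu_0 - D\partial_x f(w_0)} + \lipDdxf \specnorm{w_0 - w_*},
\end{equation*}
and analogously for $\nu_0$. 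Hence choosing $R \le \tilde R$ and $\delta_0 \le \tilde\delta_0 - \max\{\lipDdxf, \lipDdyg\}\, R$ small enough, any admissible triple $(w_0,\mu_0,\nu_0)$ in the sense of the corollary automatically satisfies the initialization bounds of \Cref{theorem:sga-approx-conv-specnrmbound}, and the claimed linear convergence follows.

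The main obstacle is not any single computation but keeping the parameter-selection order consistent: $\tau$ must be chosen first (purely from the spectral data of $H(w_*)$); then $\eta$ must be made small enough to ensure simultaneously $\sigma < 1$ and $\eta \tau L_F < 1$; only afterwards can $R$ and $\delta_0$ be shrunk to reconcile the ``initial error relative to $w_0$'' formulation of the corollary with the ``initial error relative to $w_*$'' formulation of \Cref{theorem:sga-approx-conv-specnrmbound}. Care is also needed that $B_R(w_*) \subseteq \Omega_0$ so that all Lipschitz constants remain valid along the iteration.
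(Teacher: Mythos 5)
Your proposal is correct and follows essentially the same route as the paper: Lipschitz constants obtained from the $C^3$ regularity, \Cref{lma:metricproperties}(b) to make $w \mapsto (I-\tau A(w_*))H(w_*)w$ strongly monotone, a contraction estimate yielding $\specnorm{D\fplrsga(w_*)}<1$ for $\eta$ small, the extra shrinking of $\eta$ to secure $\eta\tau L_F<1$, and finally an appeal to \Cref{theorem:sga-approx-conv-specnrmbound}. Your explicit triangle-inequality step reconciling the corollary's initialization condition (measured against $D\partial_x f(w_0)$) with the theorem's (measured against $D\partial_x f(w_*)$) is a detail the paper's proof leaves implicit, and you handle it correctly.
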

\begin{proof}
\begin{color}{revblue}
Choose \(r_0>0\) such that \(B_{r_0}(w_*)\subset\Omega\). Since
\(f,g\in C^3(\Omega,\mathbb R)\), after reducing \(r_0\) if necessary, the
maps \(F\), \(D\partial_x f\), and \(D\partial_y g\) are Lipschitz continuous
on \(B_{r_0}(w_*)\), and \(\fplrsga\) is \(C^1\) there.

Set \(H_*:=H(w_*)\), \(S_*:=S(w_*)\), and \(A_*:=A(w_*)\). Since \(w_*\) is
an SNE point, \(F(w_*)=0\), \(H_*\succeq0\), and 
\(S_*\) is positive definite, Lemma~\ref{lma:metricproperties}(b) gives
\(\bar\tau>0\) such that, for every \(\tau\in(0,\bar\tau)\), the linear map
\[
    B_*:=(I-\tau A_*)H_*
\]
is \(h\)-strongly monotone. Moreover,
\[
    L_*:=\sqrt{1+\tau^2\|A_*\|_2^2}\,\|H_*\|_2
\]
is a Lipschitz constant for \(B_*\). Hence Proposition~\ref{prop:avnonexp}
implies that, for every \(\eta\in(0,2h/L_*^2)\),
\[
    \|I-\eta B_*\|_2<1 .
\]
Since
\[
    D\fplrsga(w_*)=I-\eta(I-\tau A(w_*))H(w_*)=I-\eta B_*,
\]
the spectral condition
\(\|D\fplrsga(w_*)\|_2<1\) in
Theorem~\ref{theorem:sga-approx-conv-specnrmbound} is satisfied.
Therefore Theorem~\ref{theorem:sga-approx-conv-specnrmbound}, applied on this
local ball, gives \(R_1>0\) and \(\delta_1>0\) for initial secant matrices
close to \(D\partial_x f(w_*)\) and \(D\partial_y g(w_*)\).

It remains only to convert the stated initialization at \(w_0\) into that
required by the theorem. Let \(L_x,L_y\) be Lipschitz constants for
\(D\partial_x f\) and \(D\partial_y g\) on \(B_{R_1}(w_*)\). Choose
\(R\le R_1\) and \(\delta_0>0\) such that
\(\delta_0+L_xR\le\delta_1\) and \(\delta_0+L_yR\le\delta_1\). Then, whenever
\(w_0\in B_R(w_*)\),
\begin{eqnarray*} 
\|\mu_0-D\partial_x f(w_*)\|_2
& \le &
\|\mu_0-D\partial_x f(w_0)\|_2
+\|D\partial_x f(w_0)-D\partial_x f(w_*)\|_2 \\
\ & \le & \delta_0+L_xR\le\delta_1,
\end{eqnarray*}
and the same argument gives
\(\|\nu_0-D\partial_y g(w_*)\|_2\le\delta_1\). Thus all hypotheses of
Theorem~\ref{theorem:sga-approx-conv-specnrmbound} are satisfied, and the
LRSGA iterates converge linearly to \(w_*\).
\end{color}
\end{proof}
\begingroup
\color{revgreen}
\let\MainJOTAOriginalTextcolor\textcolor
\renewcommand{\textcolor}[2]{#2}
\section{Numerical experiments}
\label{sec:numerical-experiments}

In this section, we report three numerical experiments designed to evaluate
different aspects of the proposed LRSGA method.  The first two experiments are
conducted on explicit deterministic games, where the game gradient and the
relevant equilibrium conditions can be computed and monitored directly.  These
experiments allow us to study the qualitative behavior of the method, the 
stationarity residual $\|F(w_k)\|$, and the distance from known equilibria.

The first experiment considers a low-dimensional nonconvex game with multiple
stationary points and two local Nash equilibria.  Its purpose is to illustrate
how different optimization methods, and in particular different Broyden
initializations in LRSGA, may select different local Nash equilibria.  The
second experiment considers a higher-dimensional nonlinear game with a known
stable Nash equilibrium.  This setting allows us to compare LRSGA, SGA, the
GDA method introduced in \eqref{eGD}, its optimistic variant OGDA, the
extragradient method EG, and linearized
CGD in terms of residual decay, distance to equilibrium,
number of iterations, and mean optimizer time per iteration.

The third experiment is a CLIP-inspired neural-network training toy example.
Unlike the first two experiments, its purpose is not to certify convergence to
a Nash equilibrium.  Rather, it is designed to compare the computational cost
of LRSGA with that of exact SGA in a neural training setting.  In particular,
we show that LRSGA can achieve test loss values comparable to exact SGA while
substantially reducing the average training time per epoch, because it avoids
the explicit computation of mixed derivatives.

\subsection{A low-dimensional game with multiple known stationary points}
\label{subsec:low-dimensional-multiple-equilibria}

We next consider a two-player deterministic game in which both players have
one scalar decision variable.  The purpose of this experiment is not to test large-scale performance, but
rather to evaluate the behavior of the methods in a low-dimensional nonconvex
setting with multiple stationary points, where different algorithms and
initializations may lead to different local Nash equilibria. The players minimize, respectively,
\begin{equation}
    f(x,y) = (1-x)^2 + 100(y-x^2)^2,
    \qquad
    g(x,y) = (x-1)^2 + (y-1)^2,
    \label{eq:lowdim-game-objectives}
\end{equation}
with $(x,y)\in\mathbb{R}^2$.  The associated game gradient is
\begin{equation}
    F(x,y)
    =
    \begin{pmatrix}
        \textcolor{black}{\partial_x f(x,y)}\\
        \textcolor{black}{\partial_y g(x,y)}
    \end{pmatrix}
    =
    \begin{pmatrix}
        2(x-1)-400x(y-x^2)\\
        2(y-1)
    \end{pmatrix}.
    \label{eq:lowdim-game-gradient}
\end{equation}
Since the feasible set is the whole space, every local Nash equilibrium lies
in the domain of the game and no boundary conditions arise. Therefore,
$F(x,y)=0$ is the standard first-order necessary condition for a local Nash equilibrium. In this experiment, however,
$\|F(x,y)\|$ is used only as a stationarity residual: it is not, by itself, a
sufficient certificate of a local Nash equilibrium.

The stationary points can be computed explicitly.  From
$\textcolor{black}{\partial_y g(x,y)}=0$ we obtain $y=1$.  Substituting in
$\textcolor{black}{\partial_x f(x,y)}=0$ gives $200x^3 - 199x - 1 = (x-1)(200x^2+200x+1) = 0.$
Thus the three stationary points are
\begin{equation}
    E_0 =
    \left(\frac{-10-7\sqrt{2}}{20},1\right),
    \qquad
    E_1 =
    \left(\frac{-10+7\sqrt{2}}{20},1\right),
    \qquad
    E_2 = (1,1).
    \label{eq:lowdim-stationary-points}
\end{equation}
Numerically, these points are approximately
$E_0=(-0.99497475,1)$, $E_1=(-0.00502525,1)$ and $E_2=(1,1)$.
The playerwise second-order quantities are
\begin{equation}
    f_{xx}(x,y)=2-400y+1200x^2,
    \qquad
    g_{yy}(x,y)=2.
\end{equation}
At the stationary points,
\begin{equation}
    f_{xx}(E_0)=196+420\sqrt{2}>0,
    \qquad
    f_{xx}(E_1)=196-420\sqrt{2}<0,
    \qquad
    f_{xx}(E_2)=802>0.
\end{equation}
Consequently, $E_0$ and $E_2$ are 
local Nash equilibria, while
$E_1$ is a stationary point but not a local Nash equilibrium.

The game Hessian, namely the Jacobian of the game gradient, is
\begin{equation}
    H_F(x,y)=\textcolor{black}{D F(x,y)}
    =
    \begin{pmatrix}
        2-400y+1200x^2 & \qquad -400x\\
        0 & \qquad 2
    \end{pmatrix}.
    \label{eq:lowdim-game-hessian}
\end{equation}
The mixed derivatives are therefore
\begin{equation}
    f_{xy}(x,y)=-400x,
    \qquad
    g_{yx}(x,y)=0.
    \label{eq:lowdim-mixed-derivatives}
\end{equation}

The functions in \eqref{eq:lowdim-game-objectives} are polynomials; hence they
are $C^\infty$ on the whole domain $\mathbb{R}^2$.
The determinant of
\eqref{eq:lowdim-game-hessian} is
\begin{equation}
    \det H_F(x,y) =
    2\bigl(2-400y+1200x^2\bigr).
\end{equation}

At the two local Nash equilibria $E_0$ and $E_2$, the game Hessian is
invertible, since
\[
    \det H_F(E_0)\approx 1579.94,
    \qquad
    \det H_F(E_2)\approx 1604.00.
\]
Therefore, \textcolor{black}{as recalled in Section~\ref{sec:Nash}}, $E_0$ and $E_2$ are isolated local Nash equilibria.

It remains to check whether these isolated Nash equilibria are stable in the sense of \textcolor{black}{Definition~\ref{DefSNEpoint}}.  Since $H_F$ is not symmetric, positive
semidefiniteness of $H_F$ is understood through the quadratic form
$\langle z,H_Fz\rangle$.  This quadratic form depends only on the symmetric
part, because the skew-symmetric part contributes zero to
$\langle z,H_Fz\rangle$.  The symmetric part is
\begin{equation}
    S_F(x,y)
    =
    \frac{H_F(x,y)+H_F(x,y)^\top}{2}
    =
    \begin{pmatrix}
        2-400y+1200x^2 & \quad-200x\\
        -200x & \quad 2
    \end{pmatrix}.
\end{equation}
Therefore, $H_F$ is positive semidefinite in the sense of
\textcolor{black}{Definition~\ref{DefSNEpoint}} if and only if $S_F$ is positive semidefinite.  \textcolor{black}{At the two local Nash equilibria,}
\[
    \det S_F(E_0)\approx -38019.05,
    \qquad
    \det S_F(E_2)\approx -38396.00.
\]
Thus $S_F$ is indefinite at both $E_0$ and $E_2$.  Consequently, although
$E_0$ and $E_2$ are isolated local Nash equilibria, they are not stable Nash
equilibria in the sense of \textcolor{black}{Definition~\ref{DefSNEpoint}}.
This experiment is therefore deliberately outside the hypotheses of Theorem~\ref{theorem:sga-approx-conv-specnrmbound}, which require an SNE point. The observed convergence of LRSGA and SGA towards $E_0$ and $E_2$ suggests that the SNE condition is sufficient but not necessary for convergence, and that the methods can perform well beyond the regime covered by the theory.

Before comparing the optimization methods, Fig.~\ref{fig:lowdim-stationary-overview}
shows the stationary structure of the game.  The three stationary points are
the intersections of the two first-order nullclines $f_x=0$ and $g_y=0$.

\begin{figure}[htbp]
\centering
\includegraphics[width=0.82\linewidth]{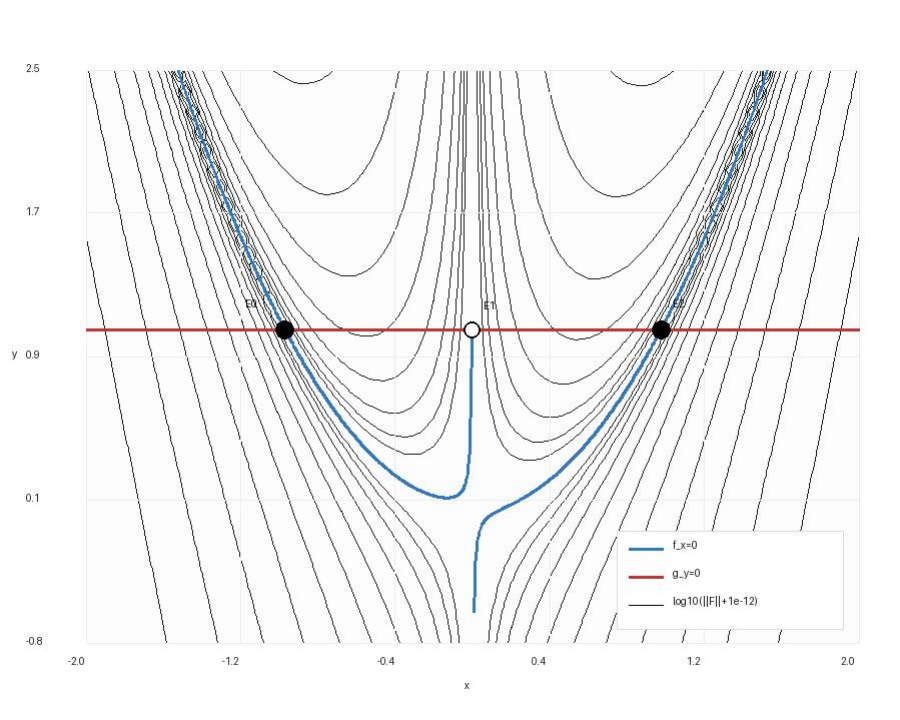}
\caption{Stationary structure of the low-dimensional game before applying any
optimization method.  The blue curve is the nullcline $f_x=0$, the red line is
the nullcline $g_y=0$, and the thin black curves are level curves of
$\log_{10}(\|F(x,y)\|+10^{-12})$. The filled black markers $E_0$ and $E_2$ are local Nash equilibria,
while the open marker $E_1$ is a stationary point that is not a local Nash
equilibrium.}
\label{fig:lowdim-stationary-overview}
\end{figure}

We compare seven deterministic methods: GDA, the optimistic GDA method
(OGDA), the extragradient method (EG), SGA, the linearized CGD method
as in \textcolor{black}{\eqref{MCGD}}, LRSGA, and LRSGAexact. \textcolor{black}{We recall} that SGA and CGD use the exact mixed derivatives, GDA denotes the iteration
$w_{k+1}=w_k-\eta F(w_k)$ (introduced in \eqref{eGD}),  LRSGA denotes the proposed low-rank method with
the random Broyden initialization used throughout the experiments.
OGDA denotes the optimistic gradient iteration
\[
w_{k+1}=w_k-\eta\bigl(2F(w_k)-F(w_{k-1})\bigr),
\]
with a plain gradient step at $k=0$ \cite{Daskalakis2018,rakhlin2013}, and EG
denotes the extragradient method of Korpelevich \cite{Korpelevich1976},
\[
\tilde w_k=w_k-\eta F(w_k),
\qquad
w_{k+1}=w_k-\eta F(\tilde w_k),
\]
which requires two gradient evaluations per iteration. Both are first-order
schemes that damp the rotational component of the game dynamics without using
second-order information, and therefore constitute the natural first-order
baselines for the proposed method.  In the
present implementation, the initial Broyden matrices in LRSGA are initialized
with independent uniform random entries and a fixed seed.  LRSGAexact denotes
the same method except that the initial Broyden matrices are set equal to the
exact Jacobian rows of $\textcolor{black}{\partial_x f}$ and $\textcolor{black}{\partial_y g}$ at the initial point:
\begin{equation}
    \mu_0 = \bigl(f_{xx}(x_0,y_0), f_{xy}(x_0,y_0)\bigr),
    \qquad
    \nu_0 = \bigl(g_{yx}(x_0,y_0), g_{yy}(x_0,y_0)\bigr).
\end{equation}
After this initialization, LRSGAexact uses the same Broyden update as LRSGA.
This comparison is included to assess the sensitivity of the low-rank Broyden
approximation to its initialization.  Since the purpose of LRSGA is precisely
to avoid the explicit computation of mixed derivatives, LRSGAexact is used only
as a diagnostic variant and not as the proposed implementation.

All methods are run from the same initial points, with the same fixed
step-size parameters $\eta=\tau=10^{-3}$ and for $5000$ iterations.  No
averaging over random seeds is performed in this experiment: for each initial
point, each method is run once, and all methods share exactly the same initial
condition.  Table~\ref{tab:lowdim-selected-equilibria} reports the stationary
point selected by each method, while Table~\ref{tab:lowdim-final-residuals}
reports the final residual $\|F(w_K)\|$.
The selected equilibrium is determined by the nearest stationary point to the
final iterate.

\begin{table}[t]
\centering
\scriptsize
\setlength{\tabcolsep}{3pt}
\begin{tabular}{c|ccccccc}
\hline
$w_0=(x_0,y_0)$ & LRSGA & LRSGAexact & GDA & OGDA & EG & SGA & CGD\\
\hline
$(-1.25,1.25)$ & $E_0$ & $E_0$ & $E_0$ & -- & -- & $E_0$ & $E_0$\\
$(1.25,1.25)$ & $E_2$ & $E_2$ & $E_2$ & -- & -- & $E_2$ & $E_2$\\
$(1.75,-0.06)$ & $E_2$ & $E_0$ & $E_0$ & -- & -- & $E_0$ & $E_0$\\
$(-1.65,0.25)$ & $E_2$ & $E_0$ & $E_2$ & -- & -- & $E_0$ & $E_2$\\
\hline
\end{tabular}
\caption{Equilibrium selected by each method in the low-dimensional
multi-equilibrium game.  The initializations $(1.75,-0.06)$ and $(-1.65,0.25)$ produce different local
Nash equilibrium selections across \textcolor{black}{methods.} A dash
indicates that the method did not approach any stationary point with the
common stepsize $\eta=10^{-3}$; see
Table~\ref{tab:lowdim-final-residuals} and the discussion below.}
\label{tab:lowdim-selected-equilibria}
\end{table}

\begin{table}[t]
\centering
\scriptsize
\setlength{\tabcolsep}{3pt}
\begin{tabular}{c|ccccccc}
\hline
$w_0=(x_0,y_0)$ & LRSGA & LRSGAexact & GDA & OGDA & EG & SGA & CGD\\
\hline
$(-1.25,1.25)$ & $4.01{\times}10^{-5}$ & $4.51{\times}10^{-5}$ & $2.52{\times}10^{-5}$ & $1.31{\times}10^{2}$ & $1.86{\times}10^{3}$ & $4.52{\times}10^{-5}$ & $2.26{\times}10^{-5}$\\
$(1.25,1.25)$ & $3.95{\times}10^{-5}$ & $4.44{\times}10^{-5}$ & $2.51{\times}10^{-5}$ & $1.37{\times}10^{2}$ & $1.88{\times}10^{3}$ & $4.47{\times}10^{-5}$ & $2.26{\times}10^{-5}$\\
$(1.75,-0.06)$ & $1.35{\times}10^{-4}$ & $4.65{\times}10^{-5}$ & $1.07{\times}10^{-4}$ & $2.20{\times}10^{15}$ & $2.22{\times}10^{15}$ & $5.54{\times}10^{-5}$ & $9.58{\times}10^{-5}$\\
$(-1.65,0.25)$ & $1.63{\times}10^{-4}$ & $4.92{\times}10^{-5}$ & $7.54{\times}10^{-5}$ & $1.25{\times}10^{14}$ & $1.86{\times}10^{3}$ & $4.54{\times}10^{-5}$ & $6.78{\times}10^{-5}$\\
\hline
\end{tabular}
\caption{Final stationarity residual $\|F(w_K)\|$ after $K=5000$ iterations.
The large values in the OGDA and EG columns correspond to divergent or
non-stationary cycling trajectories with the common stepsize $\eta=10^{-3}$.
}
\label{tab:lowdim-final-residuals}
\end{table}

\begin{figure}[htbp]
\centering
\begin{minipage}{0.48\linewidth}
    \centering
    \includegraphics[width=\linewidth]{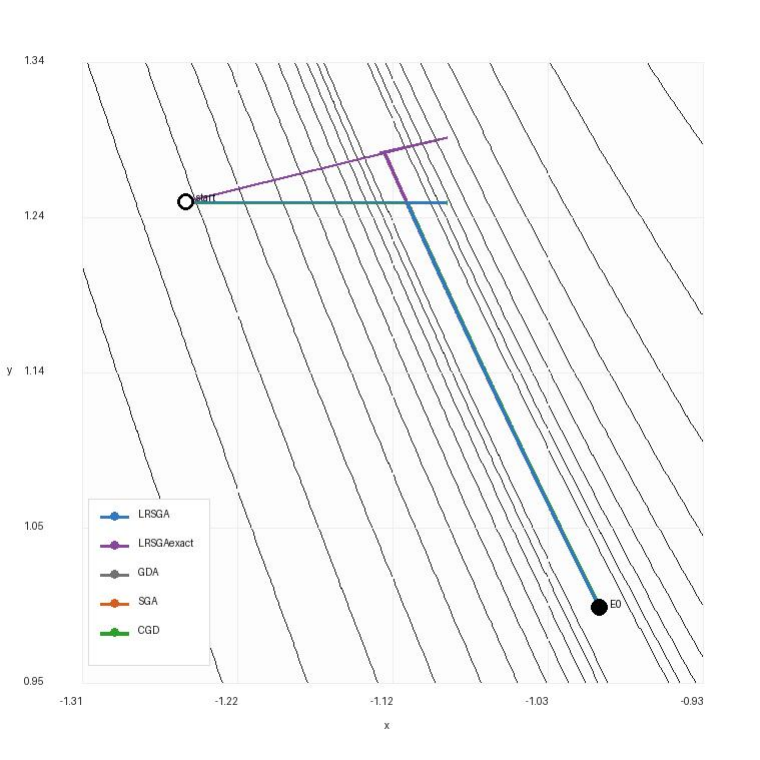}\\
    \small (a) $w_0=(-1.25,1.25)$
\end{minipage}\hfill
\begin{minipage}{0.48\linewidth}
    \centering
    \includegraphics[width=\linewidth]{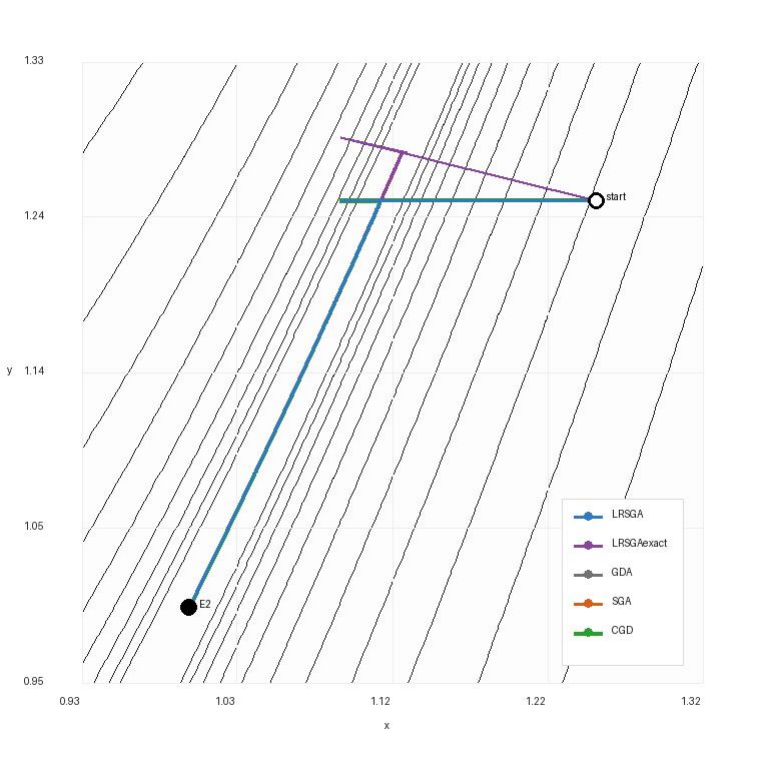}\\
    \small (b) $w_0=(1.25,1.25)$
\end{minipage}

\vspace{0.5em}

\begin{minipage}{0.48\linewidth}
    \centering
    \includegraphics[width=\linewidth]{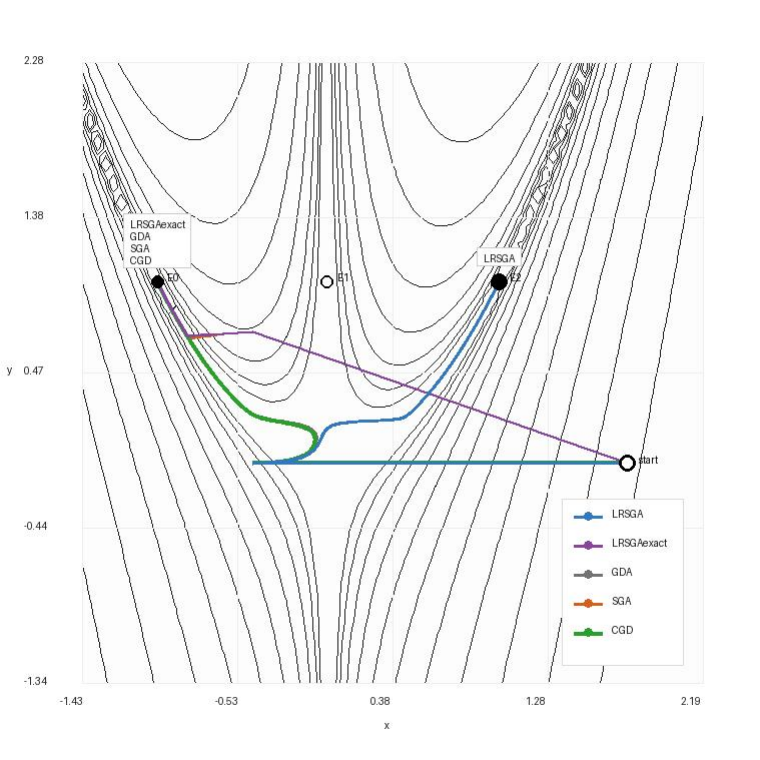}\\
    \small (c) $w_0=(1.75,-0.06)$
\end{minipage}\hfill
\begin{minipage}{0.48\linewidth}
    \centering
    \includegraphics[width=\linewidth]{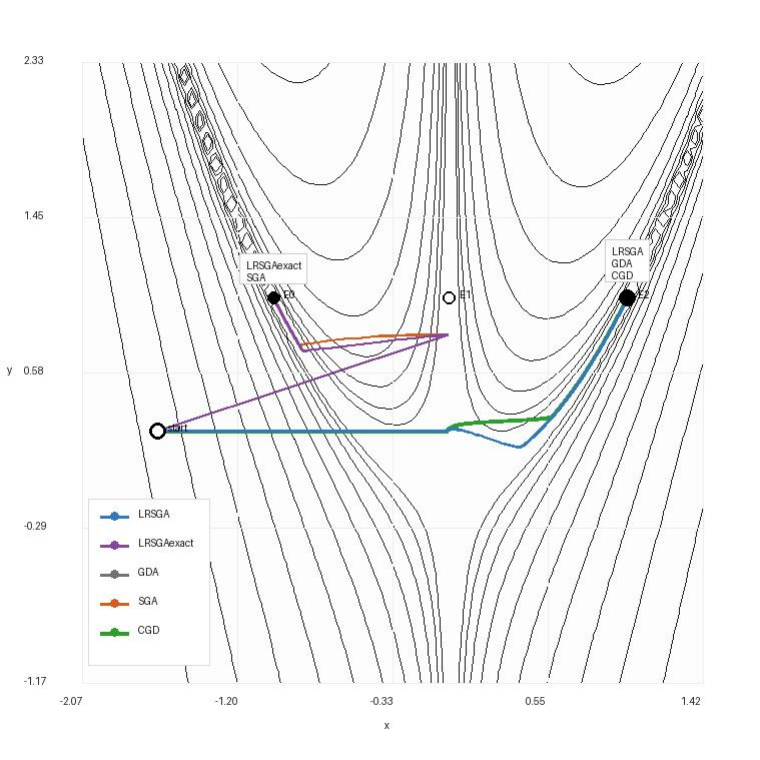}\\
    \small (d) $w_0=(-1.65,0.25)$
\end{minipage}
\caption{Local trajectories in the low-dimensional game.  The color convention
is the same in all panels: LRSGA is blue, LRSGAexact is purple, GDA is gray,
SGA is orange, and CGD is green.  The OGDA and EG trajectories are omitted
from these panels because, with the common stepsize $\eta=10^{-3}$, they
diverge or leave the displayed region; see
Table~\ref{tab:lowdim-final-residuals} and the related discussion.  The white circle labeled ``start'' denotes
the common initial point.  The filled black markers $E_0$ and $E_2$ are
local Nash equilibria, while the open marker $E_1$ is a stationary point.
The thin black curves are level curves of
$\log_{10}(\|F(x,y)\|+10^{-12})$ and are included only as a residual landscape;
they are not level curves of either objective function.  In panels (c) and
(d), the small labels above the equilibria list the methods that end at the
corresponding point.  These panels show that
different methods may select different local Nash equilibria.}
\label{fig:lowdim-trajectories}
\end{figure}

The four initializations were chosen to separate two effects.  The points
$(-1.25,1.25)$ and $(1.25,1.25)$ are in regions where all the convergent
methods select the
nearest local Nash equilibrium.  In contrast, the points $(1.75,-0.06)$ and $(-1.65,0.25)$ are initializations
for which the selected local Nash equilibrium depends on the method.  From
$(1.75,-0.06)$, LRSGA selects the right equilibrium $E_2$, whereas LRSGAexact,
GDA, SGA, and CGD select the left equilibrium $E_0$.  From
$(-1.65,0.25)$, LRSGA, GDA, and CGD select $E_2$, while LRSGAexact and SGA
select $E_0$. These results indicate that, in this example, the Broyden initialization can
affect which local Nash equilibrium is selected by the method, especially for
initializations where different update rules already lead to different
outcomes.  

The behavior of the two first-order baselines deserves a separate comment.
With the common stepsize $\eta=10^{-3}$, OGDA diverges from the
initializations $(1.75,-0.06)$ and $(-1.65,0.25)$, and stalls on
non-stationary configurations from the remaining two, while EG diverges from
$(1.75,-0.06)$ and settles on non-stationary cycles from the other three; see
the OGDA and EG columns of Table~\ref{tab:lowdim-final-residuals}.  This is
consistent with the fact that the optimistic and extrapolation corrections
reduce the stable stepsize range with respect to plain GDA, which is
restrictive in this stiff game, where the eigenvalues of the game Hessian
reach the order of $10^{3}$ near the equilibria.  As a control, we repeated
the experiment with all seven methods, the reduced stepsize
$\eta=\tau=2\times10^{-4}$, and $25000$ iterations: in this configuration all
methods, including OGDA and EG, converge to $E_0$ or $E_2$ from all four initializations, with
final residuals of the same order as in
Table~\ref{tab:lowdim-final-residuals}.  Thus the failures of
OGDA and EG in Table~\ref{tab:lowdim-final-residuals} are a stepsize-range
effect rather than a structural one; at the same time, the experiment shows
that the skew-corrected methods SGA and LRSGA tolerate the same stepsize as
GDA, whereas the first-order optimistic and extragradient corrections do not.

Overall, the experiment shows that the random Broyden initialization used by
LRSGA can influence the local Nash equilibrium selected in a nonconvex
multi-equilibrium game.  Importantly, this sensitivity does not prevent
local convergence to a Nash equilibrium: in all reported runs, the LRSGA methods end near either $E_0$ or $E_2$, with small final first-order residuals.


\subsection{A high-dimensional nonlinear game with a known stable Nash equilibrium}
\label{subsec:high-dimensional-known-equilibrium}

We now consider a deterministic high-dimensional game with a known Nash
equilibrium.  The goal of this experiment is to evaluate the convergence
behavior of the methods by monitoring the stationarity residual
$\|F(w_k)\|$, the distance to the equilibrium $\|w_k-w_*\|$, the number
of optimizer iterations required to satisfy the stopping criterion, and the
mean optimizer time per iteration. 


Let $x,y\in\mathbb{R}^d$.  For each random seed, we generate diagonal matrices
$Q_x=\operatorname{diag}(q_x)$ and $Q_y=\operatorname{diag}(q_y)$ with diagonal
entries sampled independently from the interval $[1,2]$.  We also generate two dense random
matrices $C,D\in\mathbb{R}^{d\times d}$ and normalize them so that
$\|C\|_2=\|D\|_2=1$.  With elementwise sine and cosine, the two players
minimize
\begin{equation}
    f(x,y)
    =
    \frac{1}{2}x^\top Q_x x
    +
    \alpha \sin(\omega x)^\top C \sin(\omega y),
    \qquad
    g(x,y)
    =
    \frac{1}{2}y^\top Q_y y
    +
    \beta \sin(\omega y)^\top D \sin(\omega x).
    \label{eq:highdim-game-objectives}
\end{equation}
In all the experiments below we use
$\alpha=\beta=0.04375$ and $\omega=4$, so that
$\alpha\omega^2=\beta\omega^2=0.7$.  The associated game gradient is
\begin{equation}
    F(x,y)
    =
    \begin{pmatrix}
        \textcolor{black}{\partial_x f(x,y)}\\
        \textcolor{black}{\partial_y g(x,y)} 
    \end{pmatrix}=
    \begin{pmatrix}
         Q_xx +\alpha\omega\, \cos(\omega x)\odot \bigl(C\sin(\omega y)\bigr)\\
        Q_yy + \beta\omega\, \cos(\omega y)\odot \bigl(D\sin(\omega x)\bigr)
    \end{pmatrix},
    \label{eq:highdim-game-gradient-definition}
\end{equation}
where $\odot$ denotes the componentwise product.  Since
$\sin(0)=0$, the point $    w_*=(x_*,y_*)=(0,0)$
satisfies $F(w_*)=0$.  
The functions in \eqref{eq:highdim-game-objectives} are 
$C^\infty$ on the whole domain $\mathbb{R}^{2d}$.
\textcolor{black}{The game Hessian} has the block form
\begin{equation}
    H_F(x,y)
    =
    \textcolor{black}{D F(x,y)}
    =
    \begin{pmatrix}
        H_{xx}(x,y) & H_{xy}(x,y)\\
        H_{yx}(x,y) & H_{yy}(x,y)
    \end{pmatrix},
    \label{eq:highdim-game-hessian}
\end{equation}
with
\begin{align}
    H_{xx}(x,y)
    &=
    Q_x
    -
    \alpha\omega^2
    \operatorname{diag}
    \left(
        \sin(\omega x)\odot C\sin(\omega y)
    \right),
    \label{eq:highdim-A-block}\\
    H_{xy}(x,y)
    &=
    \alpha\omega^2
    \operatorname{diag}\bigl(\cos(\omega x)\bigr)
    C
    \operatorname{diag}\bigl(\cos(\omega y)\bigr),
    \label{eq:highdim-B-block}\\
    H_{yx}(x,y)
    &=
    \beta\omega^2
    \operatorname{diag}\bigl(\cos(\omega y)\bigr)
    D
    \operatorname{diag}\bigl(\cos(\omega x)\bigr),
    \label{eq:highdim-G-block}\\
    H_{yy}(x,y)
    &=
    Q_y
    -
    \beta\omega^2
    \operatorname{diag}
    \left(
        \sin(\omega y)\odot D\sin(\omega x)
    \right).
    \label{eq:highdim-R-block}
\end{align}

The diagonal blocks are the playerwise Hessians \(H_{xx}(x,y)=\textcolor{black}{\partial_{xx}^2 f(x,y)}\) and \( H_{yy}(x,y)=\textcolor{black}{\partial_{yy}^2 g(x,y)}.\) At $w_*=(0,0)$, since $\sin(0)=0$, we have
\[
    \textcolor{black}{\partial_{xx}^2 f(w_*)}=Q_x,
    \qquad
    \textcolor{black}{\partial_{yy}^2 g(w_*)}=Q_y.
\]
Because the diagonal entries of $Q_x$ and $Q_y$ belong to the interval $[1,2]$, both
matrices are positive definite.  Therefore, together with
$F(w_*)=0$, the playerwise second-order sufficient conditions hold, and
$w_*$ is a local Nash equilibrium.

The mixed derivatives are the blocks $H_{xy}(x,y)=\textcolor{black}{\partial_{x y}^2 f(x,y)}$ and $H_{yx}(x,y)=\textcolor{black}{\partial_{y x}^2 g(x,y)}$. We emphasize that they are generally nonzero because the
random matrices $C$ and $D$ are dense, and they are not constant because of the
factors $\operatorname{diag}(\cos(\omega x))$ and $\operatorname{diag}(\cos(\omega y))$. 

Hence, this experiment tests LRSGA in a setting where the Broyden updates
approximate varying mixed-derivative blocks rather than constant or identically
zero matrices.

At $w_*=(0,0)$, the game Hessian becomes
\begin{equation}
    H_F(w_*)
    =
    \begin{pmatrix}
        Q_x & \gamma C\\
        \gamma D & Q_y
    \end{pmatrix},
    \qquad
    \gamma=\alpha\omega^2=\beta\omega^2=0.7 .
    \label{eq:highdim-hessian-at-equilibrium}
\end{equation}
The symmetric part at $w_*$ is
\begin{equation}
    S_F(w_*)
    =
    \frac{H_F(w_*)+H_F(w_*)^\top}{2}
    =
    \begin{pmatrix}
        Q_x & \frac{\gamma}{2}(C+D^\top)\\
        \frac{\gamma}{2}(C^\top+D) & Q_y
    \end{pmatrix}.
    \label{eq:highdim-symmetric-part}
\end{equation}

Since $\lambda_{\min}(Q_x)\geq 1$, $\lambda_{\min}(Q_y)\geq 1$, and
$\|C\|_2=\|D\|_2=1$, for every $u,v\in\mathbb{R}^d$ we have
\begin{align}
    \begin{pmatrix}u\\v\end{pmatrix}^{\!\top}
    S_F(w_*)
    \begin{pmatrix}u\\v\end{pmatrix}
    &=
    u^\top Q_x u
    +
    v^\top Q_y v
    +
    \gamma\, u^\top (C+D^\top)v \nonumber\\
    &\geq
    \|u\|^2+\|v\|^2
    -
    \gamma\, \|u\|\,\|C+D^\top\|_2\,\|v\| \nonumber\\
    &\geq
    \|u\|^2+\|v\|^2
    -
    2\gamma\,\|u\|\,\|v\| \nonumber\\
    &\geq
    (1-\gamma)\bigl(\|u\|^2+\|v\|^2\bigr) \nonumber\\
    &=
    0.3\bigl(\|u\|^2+\|v\|^2\bigr).
    \label{eq:highdim-symmetric-lower-bound}
\end{align}
Thus $S_F(w_*)$ is positive definite.  Moreover, writing
$H_F(w_*)=S_F(w_*)+A_F(w_*)$, with
$A_F(w_*)^\top=-A_F(w_*)$, we have, for every
$z\in\mathbb{R}^{2d}$,
\(
    z^\top H_F(w_*)z
    =
    z^\top S_F(w_*)z
    +
    z^\top A_F(w_*)z
    =
    z^\top S_F(w_*)z,
\)
because the quadratic form associated with a skew-symmetric matrix is zero.
Hence, for every $z\neq 0$,
\[
    z^\top H_F(w_*)z
    =
    z^\top S_F(w_*)z
    >
    0.
\]
In particular, $H_F(w_*)$ is positive definite in the quadratic-form sense.  Moreover, this strict positivity implies that
$H_F(w_*)$ is invertible: indeed, if $H_F(w_*)z=0$ for some
$z\neq 0$, then $z^\top H_F(w_*)z=0$, a contradiction.  Since
$w_*$ is a local Nash equilibrium, it follows that $w_*$ is a stable
Nash equilibrium.  Moreover, by the isolation criterion recalled in
Section~\ref{sec:Nash}, $w_*$ is isolated.

Also here, we compare GDA, OGDA, EG, SGA, the linearized CGD method, LRSGA,
and LRSGAexact.  To test the sensitivity to the initial point, we
consider several initial radii $r\in\{0.75,3,10,35\}$, where $r$ denotes the
initial distance of each player variable from its equilibrium component.
For each dimension $d\in\{50,100\}$ and
each radius $r$, we run five seeds. For every seed, $x_0$ and $y_0$ are
sampled from a standard Gaussian distribution and then rescaled to satisfy
$\|x_0-x_*\|=\|y_0-y_*\|=r$. All methods share exactly the same initial point.

We use $\eta=0.1$, $\tau=0.5$, and a maximum of $3000$ iterations.  The stopping
criterion is satisfied when both $\|\textcolor{black}{\partial_x f(x_k,y_k)}\|<10^{-8}$ and
$\|\textcolor{black}{\partial_y g(x_k,y_k)}\|<10^{-8}$ hold for five consecutive iterations.
The reported time is the optimizer time per iteration, averaged first over the
iterations of each run and then over the five random seeds.

\begin{table}[htbp]
\centering
\scriptsize
\setlength{\tabcolsep}{3pt}
\begin{tabular}{cc|ccccccc}
\hline
$d$ & $r$ & LRSGA & LRSGAexact & GDA & OGDA & EG & SGA & CGD\\
\hline
50 & 0.75 & $148.2\pm 4.2$ & $186.4\pm 7.4$ & $181.6\pm 4.2$ & $200.4\pm 4.4$ & $200.8\pm 4.5$ & $187.4\pm 7.5$ & $177.4\pm 4.4$ \\
50 & 3 & $159.0\pm 4.3$ & $192.8\pm 7.2$ & $193.4\pm 4.8$ & $214.0\pm 4.7$ & $214.2\pm 4.7$ & $200.4\pm 7.7$ & $189.4\pm 4.7$ \\
50 & 10 & $166.2\pm 4.2$ & $202.0\pm 5.8$ & $202.6\pm 5.1$ & $224.6\pm 5.1$ & $225.0\pm 5.2$ & $210.6\pm 7.4$ & $199.2\pm 5.0$ \\
50 & 35 & $175.4\pm 4.3$ & $211.8\pm 4.8$ & $213.2\pm 5.2$ & $236.2\pm 5.2$ & $236.8\pm 5.1$ & $221.4\pm 7.4$ & $209.4\pm 5.0$ \\
100 & 0.75 & $149.6\pm 2.9$ & $186.0\pm 2.3$ & $180.6\pm 2.1$ & $198.6\pm 2.3$ & $198.8\pm 2.4$ & $186.2\pm 2.2$ & $176.4\pm 2.3$ \\
100 & 3 & $160.4\pm 3.4$ & $193.4\pm 2.4$ & $193.6\pm 2.2$ & $213.8\pm 2.2$ & $213.8\pm 2.2$ & $200.6\pm 2.3$ & $189.6\pm 2.2$ \\
100 & 10 & $169.4\pm 2.3$ & $201.8\pm 1.9$ & $203.4\pm 2.2$ & $224.8\pm 2.2$ & $224.8\pm 2.2$ & $210.8\pm 1.9$ & $199.6\pm 1.8$ \\
100 & 35 & $178.0\pm 3.7$ & $214.8\pm 3.6$ & $214.4\pm 2.2$ & $236.8\pm 2.2$ & $237.2\pm 2.5$ & $221.4\pm 2.2$ & $210.6\pm 2.2$ \\
\hline
\end{tabular}
\caption{Optimizer iterations required to satisfy the stopping criterion.
Each entry reports mean $\pm$ standard deviation over five seeds.}
\label{tab:highdim-iterations}
\end{table}

\begin{table}[htbp]
\centering
\scriptsize
\setlength{\tabcolsep}{3pt}
\begin{tabular}{cc|ccccccc}
\hline
$d$ & $r$ & LRSGA & LRSGAexact & GDA & OGDA & EG & SGA & CGD\\
\hline
50 & 0.75 & $0.041\pm 0.003$ & $0.038\pm 0.002$ & $0.013\pm 0.001$ & $0.015\pm 0.001$ & $0.023\pm 0.001$ & $0.034\pm 0.002$ & $0.029\pm 0.002$ \\
50 & 3 & $0.041\pm 0.001$ & $0.041\pm 0.002$ & $0.014\pm 0.001$ & $0.015\pm 0.000$ & $0.023\pm 0.000$ & $0.035\pm 0.002$ & $0.030\pm 0.001$ \\
50 & 10 & $0.042\pm 0.002$ & $0.041\pm 0.002$ & $0.014\pm 0.001$ & $0.016\pm 0.001$ & $0.024\pm 0.000$ & $0.035\pm 0.002$ & $0.032\pm 0.002$ \\
50 & 35 & $0.041\pm 0.001$ & $0.041\pm 0.001$ & $0.014\pm 0.000$ & $0.016\pm 0.001$ & $0.025\pm 0.001$ & $0.035\pm 0.002$ & $0.030\pm 0.001$ \\
100 & 0.75 & $0.094\pm 0.003$ & $0.089\pm 0.001$ & $0.019\pm 0.000$ & $0.020\pm 0.000$ & $0.033\pm 0.000$ & $0.072\pm 0.000$ & $0.060\pm 0.002$ \\
100 & 3 & $0.097\pm 0.005$ & $0.091\pm 0.001$ & $0.020\pm 0.001$ & $0.022\pm 0.001$ & $0.035\pm 0.002$ & $0.072\pm 0.001$ & $0.062\pm 0.002$ \\
100 & 10 & $0.098\pm 0.004$ & $0.092\pm 0.001$ & $0.020\pm 0.001$ & $0.022\pm 0.001$ & $0.035\pm 0.000$ & $0.076\pm 0.003$ & $0.061\pm 0.003$ \\
100 & 35 & $0.096\pm 0.001$ & $0.094\pm 0.002$ & $0.020\pm 0.000$ & $0.021\pm 0.000$ & $0.035\pm 0.001$ & $0.072\pm 0.003$ & $0.060\pm 0.000$ \\
\hline
\end{tabular}
\caption{Mean optimizer time per iteration in milliseconds.  These timings
measure only the optimizer update computations and do not include diagnostic
quantities such as trajectory storage or mixed-derivative error monitoring.
All methods are timed on the same machine, so the values are directly
comparable across columns.}
\label{tab:highdim-times}
\end{table}

\begin{table}[htbp]
\centering
\scriptsize
\setlength{\tabcolsep}{3pt}
\begin{tabular}{cc|ccccccc}
\hline
$d$ & $r$ & LRSGA & LRSGAexact & GDA & OGDA & EG & SGA & CGD\\
\hline
50 & 0.75 & $7.54\times 10^{-9}$ & $8.83\times 10^{-9}$ & $8.72\times 10^{-9}$ & $9.06\times 10^{-9}$ & $8.95\times 10^{-9}$ & $8.90\times 10^{-9}$ & $8.65\times 10^{-9}$ \\
50 & 3 & $7.17\times 10^{-9}$ & $8.57\times 10^{-9}$ & $8.57\times 10^{-9}$ & $9.02\times 10^{-9}$ & $9.12\times 10^{-9}$ & $8.90\times 10^{-9}$ & $8.46\times 10^{-9}$ \\
50 & 10 & $7.68\times 10^{-9}$ & $8.23\times 10^{-9}$ & $8.70\times 10^{-9}$ & $9.07\times 10^{-9}$ & $9.02\times 10^{-9}$ & $8.91\times 10^{-9}$ & $8.28\times 10^{-9}$ \\
50 & 35 & $7.45\times 10^{-9}$ & $8.77\times 10^{-9}$ & $8.60\times 10^{-9}$ & $9.19\times 10^{-9}$ & $8.98\times 10^{-9}$ & $9.13\times 10^{-9}$ & $8.74\times 10^{-9}$ \\
100 & 0.75 & $7.58\times 10^{-9}$ & $8.89\times 10^{-9}$ & $8.83\times 10^{-9}$ & $9.16\times 10^{-9}$ & $9.21\times 10^{-9}$ & $9.14\times 10^{-9}$ & $8.81\times 10^{-9}$ \\
100 & 3 & $7.33\times 10^{-9}$ & $8.43\times 10^{-9}$ & $8.96\times 10^{-9}$ & $8.97\times 10^{-9}$ & $9.19\times 10^{-9}$ & $9.03\times 10^{-9}$ & $8.79\times 10^{-9}$ \\
100 & 10 & $7.62\times 10^{-9}$ & $8.75\times 10^{-9}$ & $9.01\times 10^{-9}$ & $9.14\times 10^{-9}$ & $9.38\times 10^{-9}$ & $8.97\times 10^{-9}$ & $8.74\times 10^{-9}$ \\
100 & 35 & $7.51\times 10^{-9}$ & $8.62\times 10^{-9}$ & $8.77\times 10^{-9}$ & $9.14\times 10^{-9}$ & $9.08\times 10^{-9}$ & $9.24\times 10^{-9}$ & $8.64\times 10^{-9}$ \\
\hline
\end{tabular}
\caption{Final stationarity residual $\|F(w_K)\|$, averaged over five seeds.
All methods reach the requested first-order tolerance.}
\label{tab:highdim-final-residuals}
\end{table}

\begin{table}[htbp]
\centering
\scriptsize
\setlength{\tabcolsep}{3pt}
\begin{tabular}{cc|ccccccc}
\hline
$d$ & $r$ & LRSGA & LRSGAexact & GDA & OGDA & EG & SGA & CGD\\
\hline
50 & 0.75 & $6.46\times 10^{-9}$ & $1.01\times 10^{-8}$ & $9.30\times 10^{-9}$ & $9.68\times 10^{-9}$ & $9.58\times 10^{-9}$ & $1.02\times 10^{-8}$ & $9.31\times 10^{-9}$ \\
50 & 3 & $5.91\times 10^{-9}$ & $9.31\times 10^{-9}$ & $9.11\times 10^{-9}$ & $9.61\times 10^{-9}$ & $9.70\times 10^{-9}$ & $1.02\times 10^{-8}$ & $9.06\times 10^{-9}$ \\
50 & 10 & $6.24\times 10^{-9}$ & $8.78\times 10^{-9}$ & $9.18\times 10^{-9}$ & $9.61\times 10^{-9}$ & $9.55\times 10^{-9}$ & $1.03\times 10^{-8}$ & $8.81\times 10^{-9}$ \\
50 & 35 & $6.20\times 10^{-9}$ & $9.39\times 10^{-9}$ & $9.06\times 10^{-9}$ & $9.70\times 10^{-9}$ & $9.49\times 10^{-9}$ & $1.05\times 10^{-8}$ & $9.28\times 10^{-9}$ \\
100 & 0.75 & $5.82\times 10^{-9}$ & $1.05\times 10^{-8}$ & $9.74\times 10^{-9}$ & $1.01\times 10^{-8}$ & $1.02\times 10^{-8}$ & $1.08\times 10^{-8}$ & $9.81\times 10^{-9}$ \\
100 & 3 & $5.58\times 10^{-9}$ & $9.39\times 10^{-9}$ & $9.86\times 10^{-9}$ & $9.89\times 10^{-9}$ & $1.01\times 10^{-8}$ & $1.07\times 10^{-8}$ & $9.77\times 10^{-9}$ \\
100 & 10 & $5.81\times 10^{-9}$ & $9.60\times 10^{-9}$ & $9.87\times 10^{-9}$ & $1.00\times 10^{-8}$ & $1.03\times 10^{-8}$ & $1.06\times 10^{-8}$ & $9.67\times 10^{-9}$ \\
100 & 35 & $5.74\times 10^{-9}$ & $9.50\times 10^{-9}$ & $9.60\times 10^{-9}$ & $1.00\times 10^{-8}$ & $9.95\times 10^{-9}$ & $1.09\times 10^{-8}$ & $9.54\times 10^{-9}$ \\
\hline
\end{tabular}
\caption{Final distance to the known equilibrium,
$\|w_K-w_*\|$, averaged over five seeds.}
\label{tab:highdim-final-distances}
\end{table}

\begin{figure}[htbp]
\centering
\begin{minipage}{0.48\linewidth}
    \centering
    \includegraphics[width=\linewidth]{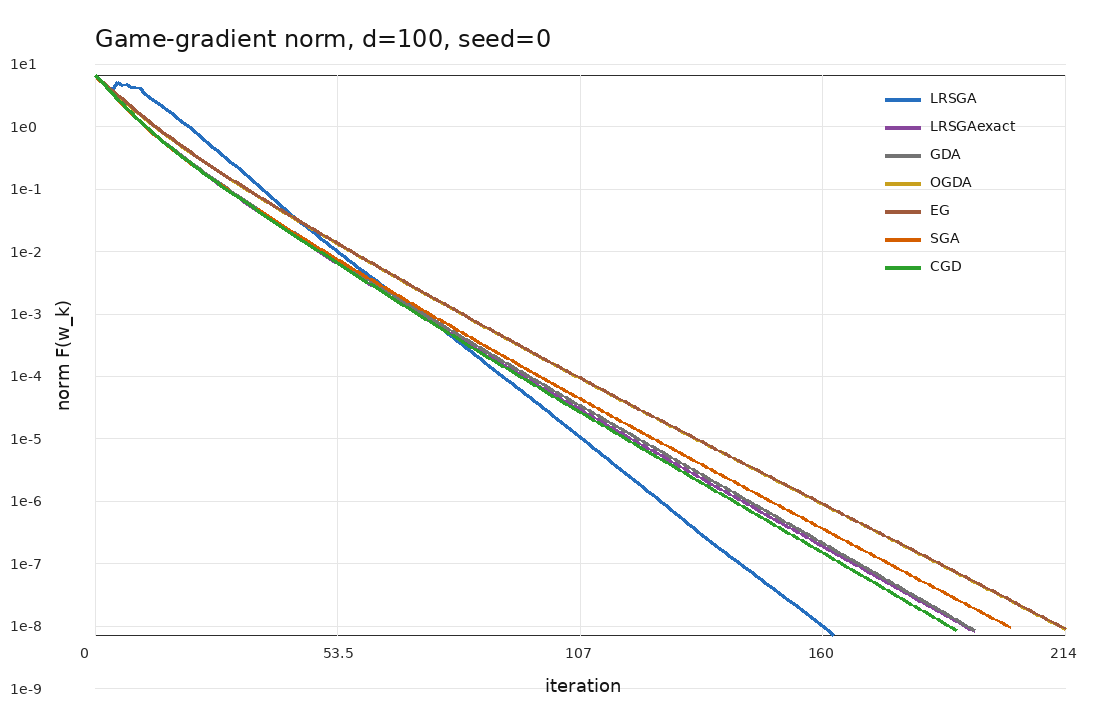}\\
    \small (a) Stationarity residual
\end{minipage}\hfill
\begin{minipage}{0.48\linewidth}
    \centering
    \includegraphics[width=\linewidth]{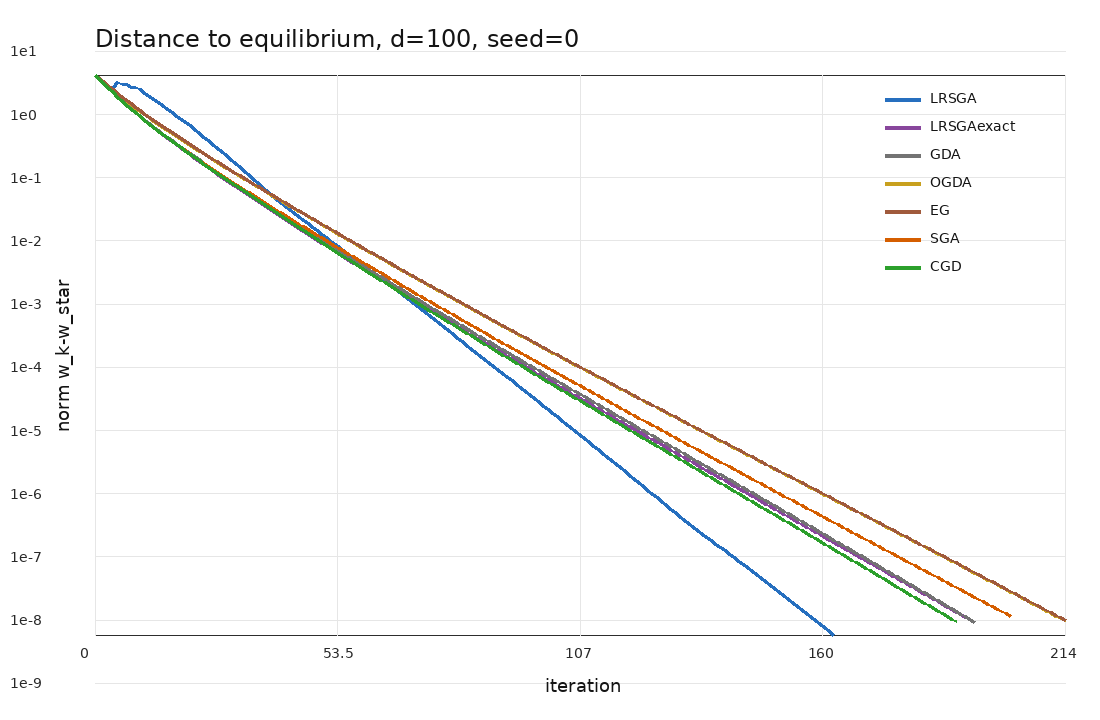}\\
    \small (b) Distance to $w_*$
\end{minipage}

\vspace{0.5em}

\begin{minipage}{0.58\linewidth}
    \centering
    \includegraphics[width=\linewidth]{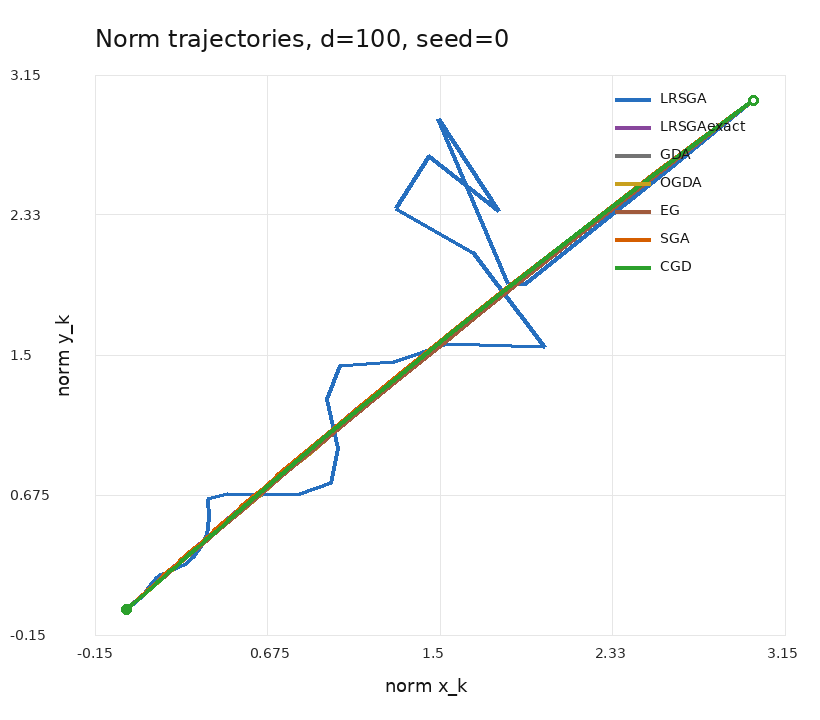}\\
    \small (c) Trajectory in the $(\|x_k\|,\|y_k\|)$ plane
\end{minipage}
\caption{Representative trajectories for $d=100$, initial radius $r=3$, and one fixed seed.  Panel (a) shows the game-gradient norm $\|F(w_k)\|$, panel (b)
shows the distance to the known equilibrium $\|w_k-w_*\|$, and panel (c)
shows the trajectory projected onto the two playerwise norms.  The color
convention is the same in all panels: LRSGA is blue, LRSGAexact is purple,
GDA is gray, OGDA is dark yellow, EG is brown, SGA is orange, and CGD is
green.}
\label{fig:highdim-trajectories}
\end{figure}

The numerical results show three consistent trends.  First, all methods
converge for all dimensions, radii, and seeds, and the final residuals in
Table~\ref{tab:highdim-final-residuals} are all of order $10^{-8}$ at least.  Second,
LRSGA with random Broyden initialization requires the smallest number of
iterations to satisfy the stopping criterion in every configuration in
Table~\ref{tab:highdim-iterations}.
This remains true even when compared with LRSGAexact, which starts from the
exact Jacobian blocks. This counterintuitive finding may be due to an implicit damping effect: an inexact skew correction in the early iterations effectively reduces the step taken in the antisymmetric direction, which can moderate oscillations and accelerate entry into the contractive neighborhood. A precise characterization of this phenomenon is beyond the scope of the present work and is left for future investigation.
The two first-order baselines OGDA and EG also converge in every
configuration, but require slightly more iterations than GDA and noticeably
more than LRSGA in all the settings of
Table~\ref{tab:highdim-iterations}.  Third, the mean per-iteration cost of LRSGA is
comparable to the other second-order-corrected methods in this deterministic
experiment, while GDA and OGDA are cheaper per iteration because they do not
compute or approximate mixed-derivative corrections; EG costs roughly twice
GDA, owing to the two gradient evaluations per iteration.
In this controlled problem GDA, OGDA, and EG also converge reliably. Nevertheless, as
discussed in the introduction through the motivating example in
Figure~\ref{figGD}, purely first-order game-gradient methods may exhibit
oscillatory behavior and this motivates the use of mixed-derivative information,
or approximations of it, as in SGA and LRSGA.


The advantage of LRSGA in this experiment is therefore primarily visible in
iteration count and residual decay, while its main computational motivation
remains the avoidance of explicit second-order derivatives in settings where
those derivatives are expensive or unavailable.  Although LRSGA and SGA can
exhibit similar trajectories, except for the effect of the Broyden
initialization in particularly delicate nonconvex cases, the computational
benefit of avoiding explicit mixed-derivative computations becomes more
pronounced in large-scale neural-network experiments.  For this reason, the
next experiment focuses on the per-epoch training time of a CLIP model.  That
experiment is intended as a direct timing comparison: at comparable training
performance, measured through the achieved loss values, LRSGA can substantially
reduce the computational cost with respect to SGA because it requires fewer
backward passes.

\subsection{Computational speedup of LRSGA with respect to SGA with explicitly assembled mixed-derivative blocks on a CLIP-inspired neural training toy example}
\label{subsec:clip-inspired-experiment}

To empirically evaluate the computational advantage of the proposed LRSGA method
with respect to the exact SGA optimization scheme, we designed a controlled
experimental protocol inspired by the CLIP architecture.  The purpose of this
experiment is not to verify convergence to a Nash equilibrium, but rather to
compare the computational cost of LRSGA and SGA when both methods are applied to
a neural-network training task with comparable achieved loss values.

Specifically, we consider a Contrastive Language-Image Pretraining (CLIP) model
\cite{radford2021learning}, due to its naturally coupled architecture: two
separate neural networks, one encoding images and the other encoding textual
descriptions, are jointly trained to learn a shared latent representation
space.  The training process of a CLIP architecture involves two contrastive
losses: the \textit{image-to-text loss}, which aligns image embeddings to their
corresponding text embeddings, and the \textit{text-to-image loss}, which aligns
text embeddings back to their corresponding image embeddings.

Unlike the classical approach, which minimizes the sum of the two losses, we
optimize each loss independently with respect to the parameters of its
corresponding encoder.  In this way, the image encoder minimizes the
image-to-text loss, while the text encoder minimizes the text-to-image loss.
This yields a nonzero-sum game-theoretic formulation suitable for SGA and
LRSGA.  Although the CLIP task is not inherently competitive in the strict
sense, nonzero-sum formulations naturally arise in competitive optimization
\cite{Balduzzi2018} and have also been explored in multitask learning under
game-theoretic frameworks \cite{Navon2022}.  This makes the experiment a useful
toy benchmark for studying game-based optimization methods in neural-network
training.

Specifically, our model comprises two neural networks, an ImageEncoder and a
TextEncoder, whose architectural details are summarized in
Figure~\ref{fig:clipStruttura}.

\begin{figure}[htbp]
    \centering
    \includegraphics[width=1\textwidth]{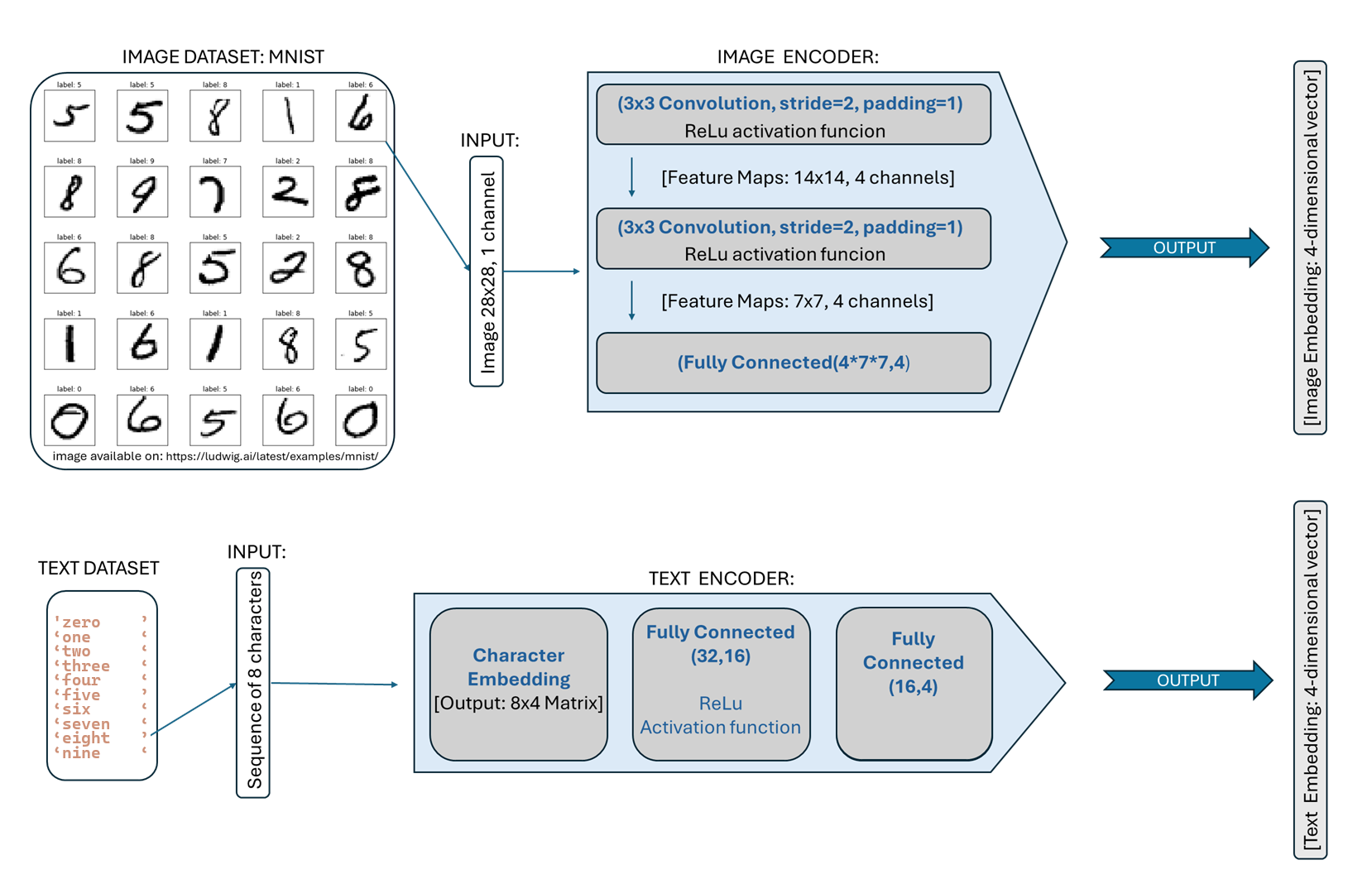}
    \caption{Architecture of the CLIP network used in the experiments.}
    \label{fig:clipStruttura}
\end{figure}

\paragraph{Dataset.}
Our experiments use a modified version of the MNIST dataset, where numerical
digit labels, from 0 to 9, are transformed into textual representations
(``zero'', ``one'', \ldots, ``nine'').  Each textual label is encoded into
numerical indices based on a defined character-to-index mapping.  We construct
a balanced subset containing 640 samples, with 64 samples per class, and divide
it into training, validation, and test sets, using proportions 60\%, 20\%, and
20\%, respectively.  During training, data are processed using mini-batches of
size 16, meaning that the model parameters are updated after computing
gradients on subsets of 16 samples at a time.

\paragraph{Neural architecture details.}
The detailed structures of the ImageEncoder and TextEncoder modules are as
follows:
\begin{itemize}
    \item \textbf{ImageEncoder}: a convolutional neural network composed of two
    convolutional layers followed by a fully connected embedding layer.  The
    output embedding dimension is fixed to 4, with ReLU activations after each
    convolutional layer, and the final embeddings are L2-normalized.
    \item \textbf{TextEncoder}: a neural network composed of a character-level
    embedding layer followed by two fully connected layers.  Textual sequences,
    fixed to 8 characters, are embedded into 4-dimensional vectors and
    L2-normalized at the output.  Image pixels are rescaled between 0 and 1.
\end{itemize}

\paragraph{Loss function.}

\begin{figure}[htbp]
    \centering
    \includegraphics[width=0.5\textwidth]{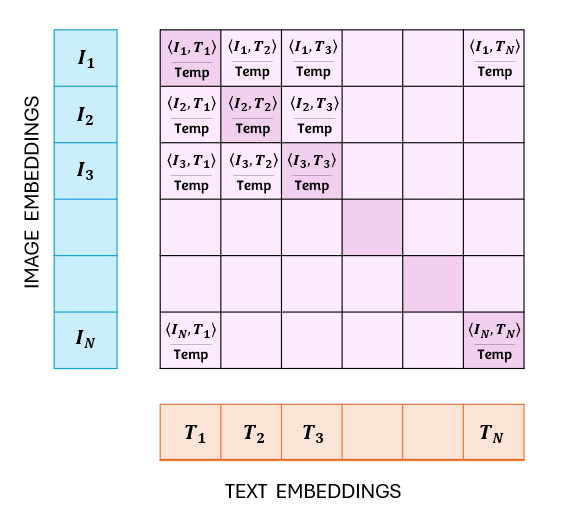}
    \caption{Illustration of the similarity matrix between image and text
    embeddings for a batch of size $N$.}
    \label{fig:similarity matrix}
\end{figure}

To train this CLIP-inspired model, we employ a symmetric contrastive loss as
in~\cite{radford2021learning}, composed of two terms:
\begin{itemize}
    \item \textbf{The image-to-text loss}, which encourages the model to assign
    a high probability to the correct text given an image.
    \item \textbf{The text-to-image loss}, which encourages the model to assign
    a high probability to the correct image given a text.
\end{itemize}

The similarity between image and text embeddings is computed through a
\emph{similarity matrix} $\texttt{logits}$, represented in
Figure~\ref{fig:similarity matrix}, whose entries are defined as
\[
\texttt{logits}_{i,j}
=
\frac{\langle \mathbf{I}_i, \mathbf{T}_j \rangle}{Temp},
\]
where $\mathbf{I}_i$ and $\mathbf{T}_j$ represent the normalized embeddings of
the $i$-th image and the $j$-th text, respectively, and $Temp$ is a temperature
hyperparameter used to scale the distribution.  In
\cite{radford2021learning}, this parameter is optimized during training.

The diagonal terms of the similarity matrix represent correct image-text
pairs, while the off-diagonal terms correspond to mismatched pairs.  To compute
the image-to-text loss for a batch of size $N$, each row of the similarity
matrix is passed through a softmax function, assigning a probability
distribution over all text candidates given an image.  The correct target for
each image is assumed to be the text at the same index position within the
batch.  Accordingly, the target vector is defined as
$\mathbf{y}=[1,\dots,N]$, which specifies that the $i$-th image is paired with
the $i$-th text.  This formulation enables the use of the standard
cross-entropy loss, where the diagonal entry of each row is treated as the
correct class.  The image-to-text loss is therefore
\[
{Loss}_{I}
=
\frac{1}{N}
\sum_{i=1}^N
-\log
\left(
\frac{\exp(\texttt{logits}_{i,i})}
{\sum_{j=1}^N \exp(\texttt{logits}_{i,j})}
\right).
\]
The text-to-image loss ${Loss}_{T}$ is computed analogously by transposing the
similarity matrix $\texttt{logits}$.

\paragraph{Experimental procedure.}
We compare the exact SGA optimizer \eqref{MSGA} with its low-rank variant,
LRSGA.  In this implementation, SGA explicitly assembles the
mixed-derivative blocks appearing in \eqref{MSGA}. This choice is deliberate:
these explicit blocks are precisely the matrix objects analyzed in
Sections~\ref{sec:SGAtheory}--\ref{sec:LRSGAtheory}, so that SGA and LRSGA differ only in how these blocks are
obtained, by exact computation or by rank-one secant updates, thereby isolating
the computational cost removed by the secant updates.  The aim is to assess whether LRSGA can reach test loss values
comparable to SGA while substantially reducing the training time required by
the optimizer.

To ensure statistical robustness and fairness, each configuration is evaluated
over multiple random seeds under identical training conditions.  In our
experiments, gradients are computed on mini-batches of the complete dataset.
We refer to one pass through the complete dataset as one epoch.  All models
are trained for the same number of epochs, using the same initialization scheme
and batch size.  The parameter $\eta$ varies in
$\{0.01,0.001,0.0001\}$, the temperature parameter is fixed at $0.09$, and the
auxiliary parameter $\tau$ is set to $\tau=\eta/100$, a conservative choice that keeps the secant correction small relative to the gradient step and proved empirically effective across all tested learning rates.

The optimizers are evaluated according to the following criteria:
\begin{itemize}
    \item \textbf{Computation time}: average CPU time per epoch, measured in
    seconds.
    \item \textbf{Environmental impact}: total CO$_2$ emissions, measured in
    grams, estimated with CodeCarbon\footnote{\url{https://codecarbon.io/}}.
    \item \textbf{Test losses}: image-to-text and text-to-image contrastive
    losses computed on the test set.
\end{itemize}

Since SGA with explicit matrix assembly requires high computation times, all training processes are halted
after 150 epochs.  Although this may be suboptimal from a training perspective,
it is sufficient for the purpose of this experiment, namely to show that LRSGA
achieves loss values comparable to SGA, see Figures~\ref{fig:TrainingSGA}
and~\ref{fig:LossesBoxPlots2}, while being notably faster and producing lower
estimated emissions, see Figure~\ref{fig:TimesAndEmissions2} and
Table~\ref{tab:results_summary2}.

\paragraph{Numerical results and discussion.}
We now present the detailed results of the comparison between LRSGA and SGA
according to the experimental protocol described above.

Figure~\ref{fig:TrainingSGA} shows the average training losses across epochs
for the two optimizers.  The curves correspond to the mean loss values per
epoch for each optimizer, where LRSGA is shown in blue and SGA in orange.
These plots highlight the similarity in training dynamics between the two
methods, with the LRSGA curves closely following those of SGA across all tested
values of~$\eta$.

\begin{figure}[htbp]
    \centering
    \begin{subfigure}{0.3\textwidth}
        \includegraphics[width=\linewidth]{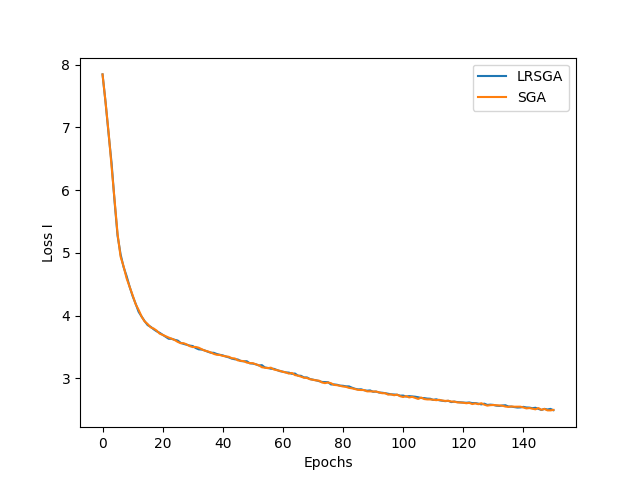}
        \caption{LossI values on training set for $\eta = 0.0001$.}
    \end{subfigure}
    \hfill
    \begin{subfigure}{0.3\textwidth}
        \includegraphics[width=\linewidth]{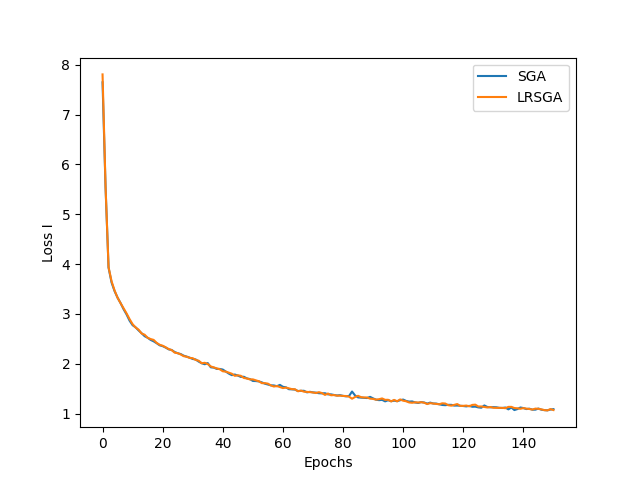}
        \caption{LossI values on training set for $\eta = 0.001$.}
    \end{subfigure}
    \hfill
    \begin{subfigure}{0.3\textwidth}
        \includegraphics[width=\linewidth]{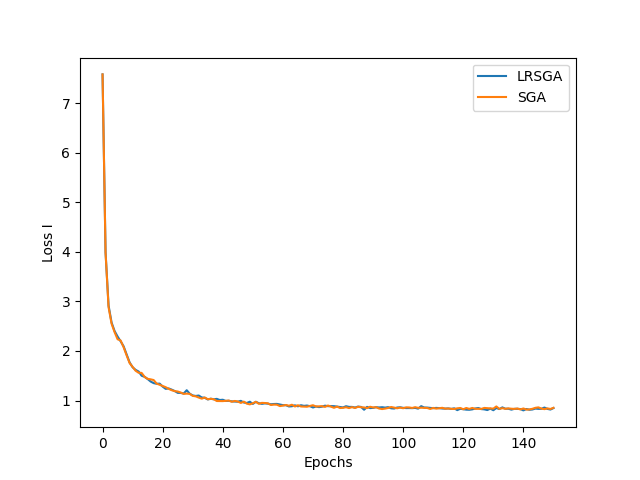}
        \caption{LossI values on training set for $\eta = 0.01$.}
    \end{subfigure}

    \vspace{1em}

    \begin{subfigure}{0.3\textwidth}
        \includegraphics[width=\linewidth]{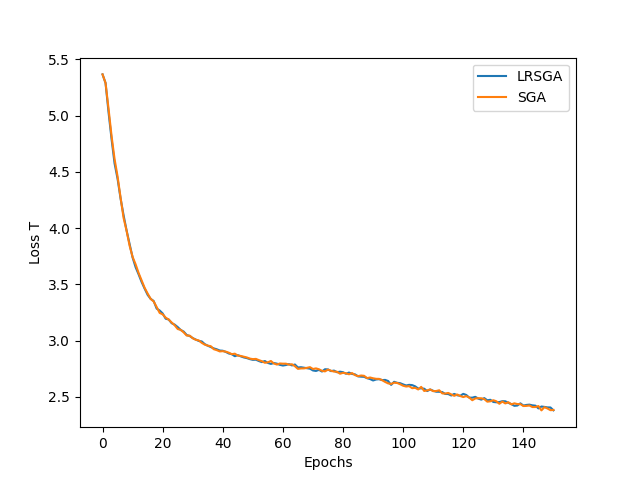}
        \caption{LossT values on training set for $\eta = 0.0001$.}
    \end{subfigure}
    \hfill
    \begin{subfigure}{0.3\textwidth}
        \includegraphics[width=\linewidth]{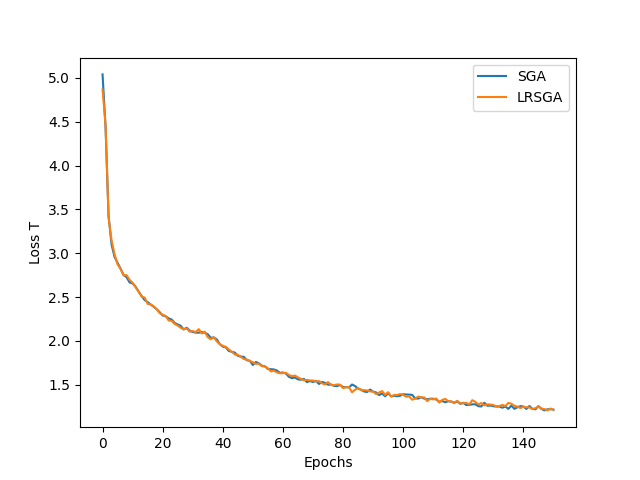}
        \caption{LossT values on training set for $\eta = 0.001$.}
    \end{subfigure}
    \hfill
    \begin{subfigure}{0.3\textwidth}
        \includegraphics[width=\linewidth]{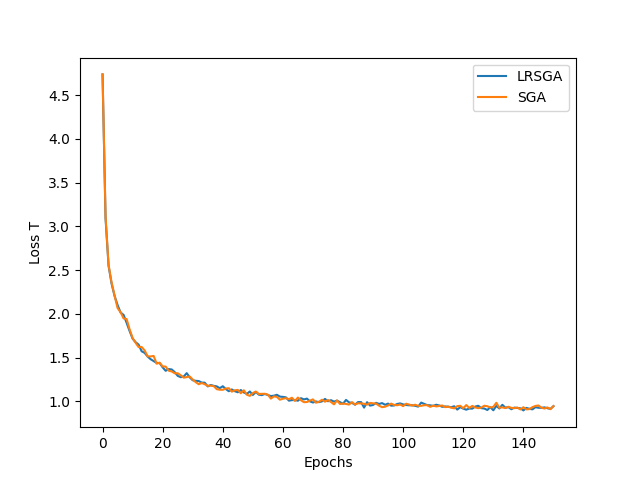}
        \caption{LossT values on training set for $\eta = 0.01$.}
    \end{subfigure}

    \caption{Training loss curves, LossI and LossT, for LRSGA and SGA across
    different values of $\eta$.}
    \label{fig:TrainingSGA}
\end{figure}

The visual similarity suggests that, during training, LRSGA reaches loss values
comparable to those of SGA.  In particular, both the image-to-text and
text-to-image loss curves of LRSGA almost overlap with those of SGA, indicating
that the two methods follow similar training dynamics.

To further validate that both optimizers achieve similar test losses, we
evaluate the contrastive losses on the test set, as illustrated in the box
plots in Figure~\ref{fig:LossesBoxPlots2}.  The interquartile range (IQR) for each box
extends from the 25th percentile to the 75th percentile, illustrating the
central distribution of the data.  The black horizontal line within each box
indicates the median value.  The whiskers extend up to 1.5 times the
interquartile range, and data points outside this range are displayed
individually as circles.  Statistical significance from pairwise comparisons
between the two optimizers is indicated above each box plot using standard
notation derived from independent-sample $t$-tests: \textit{ns}, not
statistically significant, $0.05 < p \leq 1.00$; $^{*}$, slight significance,
$0.01 < p \leq 0.05$; $^{**}$, moderate significance,
$0.001 < p \leq 0.01$; $^{***}$, high significance,
$0.0001 < p \leq 0.001$; and $^{****}$, very high significance,
$p \leq 0.0001$.

\begin{figure}[htbp]
    \centering
    \includegraphics[width=1\textwidth]{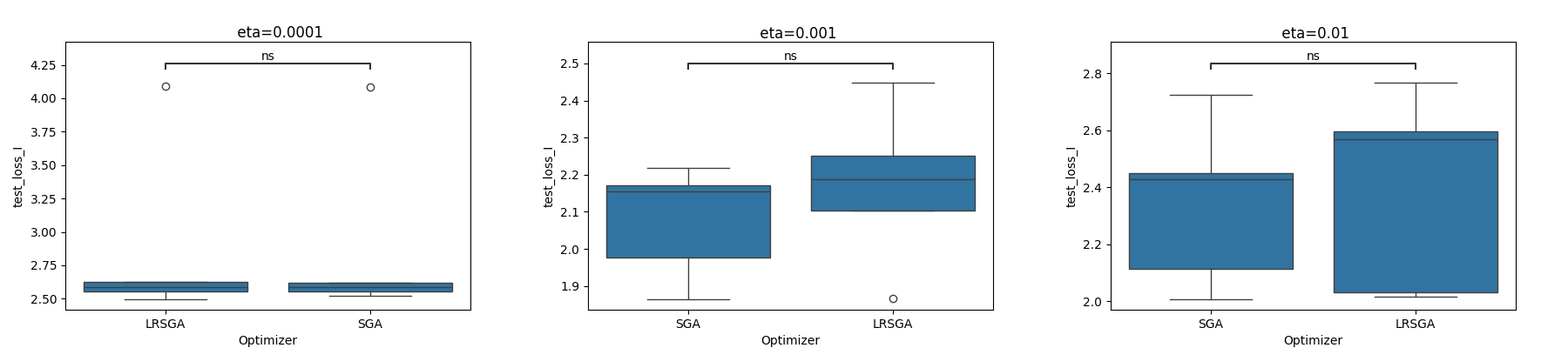}
    \includegraphics[width=1\textwidth]{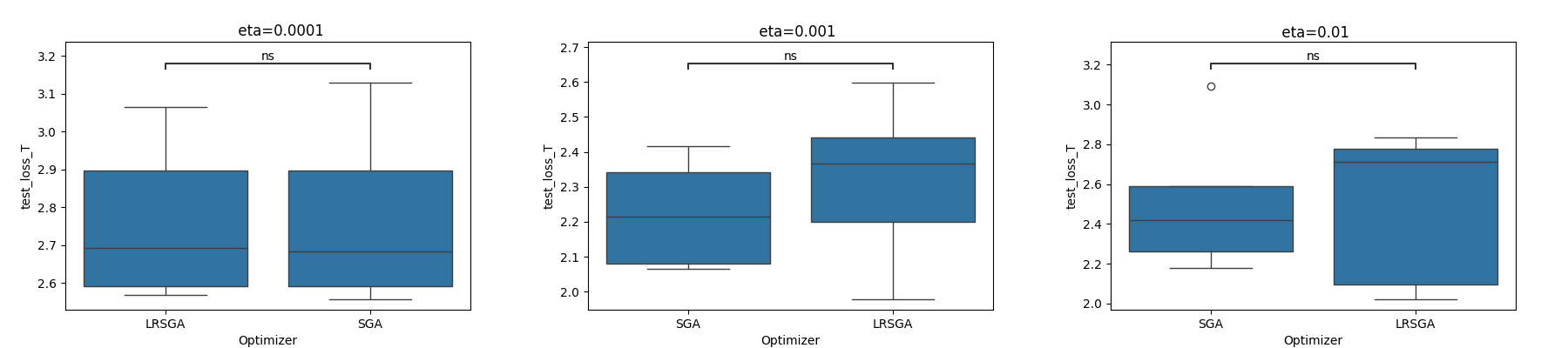}
    \caption{Box plots of image-to-text and text-to-image losses computed on
    the test set for each configuration.}
    \label{fig:LossesBoxPlots2}
\end{figure}

\begin{figure}[htbp]
    \centering
    \includegraphics[width=1\textwidth]{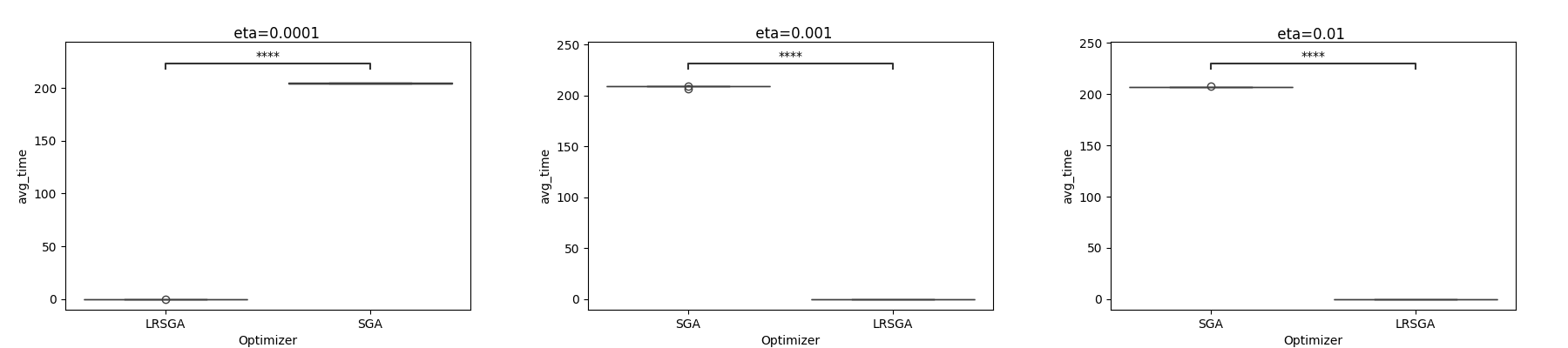}
    \includegraphics[width=1\textwidth]{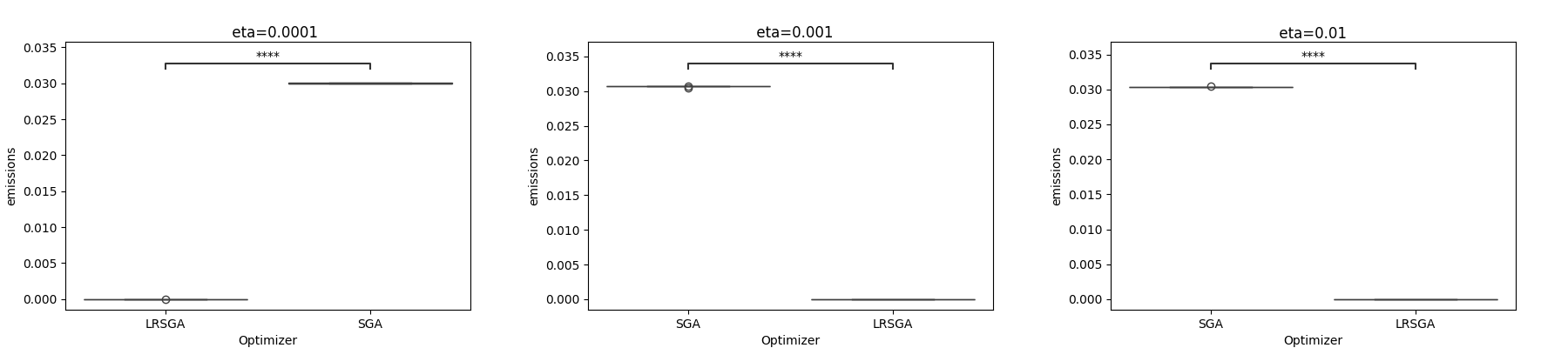}
    \caption{Box plots of average epoch time and total carbon emissions for
    each configuration.}
    \label{fig:TimesAndEmissions2}
\end{figure}

To assess LRSGA and SGA computational
efficiency, we also measure the average execution time per epoch and the total
carbon emissions produced during training.
This comparison is summarized in Figure~\ref{fig:TimesAndEmissions2}.

Table~\ref{tab:results_summary2} summarizes the numerical comparison
between LRSGA and SGA, reporting the mean and standard deviation for each
metric.  This tabular view complements the box plots and provides a direct
overview of the differences between the two methods.

\begin{table}[htbp]
\centering
\caption{Pairwise comparison of LRSGA against SGA in terms of test losses,
execution time per epoch, and estimated environmental impact.  Results are
reported as mean $\pm$ standard deviation.  Values in \textbf{bold} indicate
statistically significant differences in favor of LRSGA with respect to SGA.}
\label{tab:results_summary2}
\resizebox{\textwidth}{!}{%
\begin{tabular}{lccccc}
\toprule
Optimizer & $\eta$ & Test Image-to-Text Loss & Test Text-to-Image Loss & Avg. Epoch Time (s) & Carbon emission ($g$) \\
\midrule
SGA   & 0.01   & $2.3445 \pm 0.2863$ & $2.5088 \pm 0.3627$ & $207.1532 \pm 0.5153$ & $30.39 \pm 0.075$ \\
LRSGA & 0.01   & $2.3956 \pm 0.3471$ & $2.4886 \pm 0.3955$ & $\mathbf{0.1936 \pm 0.0085}$ & $\mathbf{0.030 \pm 1.3\times10^{-3}}$ \\
\midrule
SGA   & 0.001  & $2.0776 \pm 0.1504$ & $2.2235 \pm 0.1556$ & $208.8654 \pm 0.9728$ & $30.65 \pm 0.14$ \\
LRSGA & 0.001  & $2.1709 \pm 0.2120$ & $2.3173 \pm 0.2371$ & $\mathbf{0.1828 \pm 0.0093}$ & $\mathbf{0.030 \pm 1.4\times10^{-3}}$ \\
\midrule
SGA   & 0.0001 & $2.8739 \pm 0.6794$ & $2.7721 \pm 0.2390$ & $204.6436 \pm 0.0938$ & $30.03 \pm 0.015$ \\
LRSGA & 0.0001 & $2.8725 \pm 0.6844$ & $2.7625 \pm 0.2132$ & $\mathbf{0.2056 \pm 0.0103}$ & $\mathbf{0.030 \pm 1.6\times10^{-3}}$ \\
\bottomrule
\end{tabular}
}
\end{table}

As observed in both the box plots and Table~\ref{tab:results_summary2}, LRSGA
achieves test loss values comparable to those of SGA, while significantly
reducing training time and estimated carbon emissions.  Thus, in this
CLIP-inspired toy experiment, replacing explicit mixed-derivative computations
with Broyden approximations preserves the achieved loss values while producing
a substantial computational speedup with respect to SGA with explicitly assembled mixed-derivative blocks.

\paragraph{Scope and limitations of this experiment.}
The purpose of this CLIP-inspired experiment is not to verify convergence to a
Nash equilibrium, nor to certify the first-order residual $\|F(w_k)\|$ during
neural-network training. Moreover, because the encoders include ReLU
activations, the resulting parameter-to-loss maps are not \(C^3\) and therefore
fall outside the smooth deterministic setting of the convergence theory.
Rather, the experiment is designed as a computational
comparison between SGA, which uses explicit mixed-derivative computations, and
LRSGA, which replaces them with Broyden-type approximations.  The relevant
question in this experiment is therefore whether LRSGA can achieve comparable
training and test losses while substantially reducing the average time per
epoch and the associated estimated carbon emissions.

This experiment should be interpreted as a toy neural benchmark for the
following reasons.
\begin{enumerate}
    \item Training is performed with mini-batches.  This is standard in
    neural-network optimization, but it is not ideal for estimating
    second-order information through gradient differences between successive
    iterations, since the two consecutive gradients are computed on different
    mini-batches. 
    Stochastic quasi-Newton strategies, such as computing gradient
    differences on the same mini-batch or computing them on the intersection of two consecutive mini-batches \cite{berahas2016multi,doi:10.1137/140954362}, could be incorporated in
    future stochastic variants of LRSGA.

    \item The neural architecture of this experiment must remain very small because the
    current LRSGA formulation does not reduce the memory required to store the
    matrices approximating the mixed derivatives.  In SGA, the explicit mixed
    derivatives are expensive both computationally and in memory, since their
    dimensions scale with the number of parameters of the two neural networks.
    LRSGA addresses the computational cost associated with the explicit
    computation of these mixed derivatives, but not the memory cost. In the
    present implementation, in fact, it stores the full Broyden approximations
    \(\mu_k\approx D\partial_x f\in\mathbb{R}^{m\times(m+n)}\) and
    \(\nu_k\approx D\partial_y g\in\mathbb{R}^{n\times(m+n)}\), from which the
    mixed-derivative blocks are extracted. Hence, the memory requirement is
    even larger than that of storing only the exact mixed-derivative blocks.
    Therefore, the memory bottleneck remains a
    significant limitation for large neural models.  A natural continuation is
    to develop a limited-memory version of LRSGA, in the spirit of
    limited-memory methods such as L-BFGS, while taking into
    account that the present setting is a game-theoretic problem for computing
    Nash equilibria rather than the minimization of a single scalar objective as for L-BFGS.

    \item The timing comparison concerns SGA with explicitly assembled
    mixed-derivative blocks.  Matrix-free implementations of SGA can compute
    the correction \(A^\top \xi\) through Jacobian-vector products at the cost
    of two additional reverse-mode passes \cite[Section~1]{Balduzzi2018},
    \cite[Section~1.2 and Appendix~A]{Letcher2019}, thereby avoiding explicit
    block assembly.  A comparison with such matrix-free variants is
    methodologically paired with a limited-memory version of LRSGA, see Section~\ref{sec:FutureWork}, since both approaches abandon the explicit matrix representation on
    which the analysis in Sections~\ref{sec:SGAtheory}--\ref{sec:LRSGAtheory} is based; this comparison is the
    subject of a manuscript in preparation.  
\end{enumerate}

\endgroup

\section{Conclusions}
\label{sec:Conclusion}

A theoretical and numerical investigation 
of the symplectic gradient adjustment (SGA) method and of a low-rank SGA (LRSGA) 
method for efficiently solving \textcolor{revviolet}{optimization problems arising from two-player Nash games} was presented. The SGA method  
considerably improves upon the simple gradient method 
by including second-order mixed derivatives computed at each iterate. However, 
the use of second-order derivatives requires additional computational effort 
in back-propagation processes, which is the main disadvantage of this method. 
For this reason, \textcolor{revblue}{an} LRSGA method was developed where the approximation to
second-order mixed derivatives is obtained by rank-one updates. 
The theoretical analysis presented in this work focused on novel convergence 
estimates for the SGA method and the newly proposed LRSGA method, 
including parameter bounds. 
\textcolor{revgreen}{The numerical experiments complement the theoretical
analysis in three directions: a low-dimensional nonconvex game illustrates
local equilibrium selection in the presence of multiple Nash equilibria; a
high-dimensional nonlinear game with a known stable Nash equilibrium allows us
to monitor stationarity residuals, distances to equilibrium, iteration counts,
and per-iteration costs; and a CLIP-inspired neural training toy example shows
that LRSGA can preserve loss values comparable to exact SGA while substantially
reducing the training time associated with the explicit assembly of
mixed-derivative blocks.}
\section{Future Work}
\label{sec:FutureWork}
\textcolor{revblue}{As future work, we plan to conduct a more rigorous analysis of stochastic quasi-Newton strategies in order to develop a stochastic variant of LRSGA. 
Although LRSGA is effective in reducing the computational burden associated with the explicit evaluation of mixed second-order derivatives, 
it still requires storing the secant matrices used to build the low-rank correction. This memory side effect could be mitigated by low-memory variants;
 the design and analysis of such variants will be the subject of future work. 
 Furthermore, the selection of more appropriate values for the parameters \(\eta\) and \(\tau\), together with a deeper analysis of global convergence properties, remains a topic for future investigation.}

\begin{acknowledgements}

The authors would like to express their sincere gratitude to Professor Lionel Trojman from the Institut Supérieur d'Électronique de Paris (ISEP) for kindly granting access to the research server used during the experimental CLIP phase of this work, equipped with a 12th Gen Intel Core i9-12900K processor featuring 16 physical cores and 24 threads, with a maximum frequency of 5.2~GHz.

K. R. Foglia has been partly funded by
the PhD program in Mathematics and Computer Science at University of Calabria with the support of a scholarship financed by DM 351/2022 (CUP H23C22000440007), based on the NRRP funded by the European Union.\\

V. Colao has been partly funded by the Research project of MUR - Prin 2022 “Nonlinear differential problems with applications to real phenomena” (Grant Number: 2022ZXZTN2).
\end{acknowledgements}
\medskip

\section*{Conflict of interest statement}
This study does not have any conflicts to disclose.

\section*{Data availability statement}
The data that support the findings of this study are available from the corresponding author upon reasonable request.


\bibliographystyle{abbrv}
\bibliography{literature}

\end{document}

\endinput